%% file: circle_equivariant_tate_filtration.tex
\documentclass[oneside,a4paper]{amsart}

\input{preamble}
\usepackage{todonotes}

\input{macros}

\begin{document}

\title{On the circle-equivariant cellular Tate filtration}
\author{Toni Annala}
\author{Piotr Pstr\k{a}gowski}

\begin{abstract}
Using an argument of Jacob Lurie, we prove the existence of a $\mathbf{E}_{2}$-lax monoidal structure on the Tate filtration coming from the standard cell structure on $\mathrm{B}S^{1} \simeq \mathbf{CP}^{\infty}$. We then compare it to the filtration induced by the standard t-structure on spectra. In the last part of the paper, we construct a synthetic variant of the cellular Tate filtration and use it to compare Antieau's, Bhatt--Lurie's and Raksit's HKR filtrations on negative cyclic and periodic homology. 
\end{abstract}

% \date{\today}

\maketitle

% %\setcounter{tocdepth}{3}
\tableofcontents

\section{Introduction} 

Associated to a spectrum $X$ with an $S^1$-action we have the Tate construction
\[
X^{t S^{1}} \colonequals \mathrm{cofib}(\mathrm{Nm} \colon \Sigma X_{hS^{1}} \rightarrow X^{hS^{1}}) 
\]
defined as the cofibre of the norm map. The standard cell structure on $\mathrm{B}S^{1} \simeq \mathbf{CP}^{\infty}$ allows one to construct a filtered spectrum 
\[
\ldots \rightarrow F_{T}^{1}(X^{tS^{1}}) \rightarrow F_{T}^{0}(X^{tS^{1}}) \rightarrow F_{T}^{-1}(X^{tS^{1}}) \rightarrow \ldots,
\]
which we will refer to as the \emph{cellular Tate filtration}, with the properties that 
\begin{enumerate}
    \item the colimit $\varinjlim F_{T}^{\ast}(X^{tS^{1}})$ can be identified with $X^{tS^{1}}$, 
    \item the associated graded spectrum can be identified
    \[
    \gr_{T}^{\ast}(X^{tS^{1}}) \simeq X[-2 \ast] 
    \]
    with the collection of even shifts of $X$. 
\end{enumerate}
Each term in the filtration can be conveniently expressed in terms of fixed points of $X$ twisted by a suitable representation sphere. Concretely, for each $q \in \ZZ$ we have 
\begin{equation}
\label{equation:formula_for_the_tate_construction}
F_{T}^{q}(X^{tS^{1}}) \colonequals (X \otimes \thesphere^{-(q) \mathbf{C}})^{h S^{1}}, 
\end{equation}
where we view $S^{1}$ as acting on $\mathbf{C}$ through the identification $S^{1} \simeq U(1)$. The maps making this graded spectrum into a filtered object are induced by the inclusion $0 \hookrightarrow \mathbf{C}$ of the zero representation. 

In \cite[{6.1.8}]{bhatt-lurie:apc} it is observed that the Tate filtration can be made lax monoidal, although not symmetric monoidal. The purpose of this short note is to fill the gap in the literature by recording the proof of this result, which was explained to us by Jacob Lurie. In more detail, we prove the following: 

\begin{theorem}
\label{theorem:introduction_bt_tate_filtration_is_lax_e2_monoidal}
The Tate filtration functor $F_{T}^{\ast}({-}^{tS^{1}}) \colon \spectra^{BS^{1}} \rightarrow \Fil(\spectra)$ defined by (\ref{equation:formula_for_the_tate_construction}) can be made lax $\mathbf{E}_{2}$-monoidal. 
\end{theorem}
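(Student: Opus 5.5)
The plan is to reduce the statement to a structure on the filtered $S^{1}$-equivariant sphere $q \mapsto \thesphere^{-q\mathbf{C}}$. Write $F_{T}^{\ast}(X^{tS^{1}}) = (X \otimes \thesphere^{-\ast\mathbf{C}})^{hS^{1}}$, evaluated levelwise. The functor $(-)^{hS^{1}} \colon \spectra^{BS^{1}} \to \spectra$ is lax symmetric monoidal, being right adjoint to the symmetric monoidal functor "trivial action". Hence the induced levelwise functor $\Fil(\spectra^{BS^{1}}) \to \Fil(\spectra)$ is lax symmetric monoidal for Day convolution. Suppose $\thesphere^{-\ast\mathbf{C}}$ is an $\mathbf{E}_{2}$-algebra in $\Fil(\spectra^{BS^{1}})$. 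Then $X \mapsto X \otimes \thesphere^{-\ast\mathbf{C}}$, from $\spectra^{BS^{1}}$ (via the constant filtration) to $\Fil(\spectra^{BS^{1}})$, is lax $\mathbf{E}_{2}$-monoidal. Composing the two gives the theorem. So everything reduces to producing this $\mathbf{E}_{2}$-algebra, and to explaining why it cannot be $\mathbf{E}_{\infty}$.

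I would build it in two stages: first the underlying graded object, then the filtration maps.

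\textbf{Graded stage.} Representation spheres give a symmetric monoidal functor $V \mapsto \thesphere^{-V}$ out of the groupoid of finite-dimensional complex $U(1)$-representations on which $U(1)$ acts by scalars. This functor factors through the $\mathbf{E}_{\infty}$-space $\coprod_{n} BU(n)$, and its group completion $\ZZ \times BU$ lands in $\mathrm{Pic}(\spectra^{BS^{1}})$, since $\thesphere^{\mathbf{C}}$ is invertible. An $\mathbf{E}_{2}$-monoidal functor from the discrete group $\ZZ$ sending $1$ to $\mathbf{C}$ is therefore the same as an $\mathbf{E}_{2}$-map $\ZZ \to \ZZ \times BU$ hitting the generator on $\pi_{0}$. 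Since both sides are grouplike, such maps correspond, after double delooping and Bott periodicity $B^{2}(\ZZ\times BU) \simeq BU$, to pointed maps $\mathbf{CP}^{\infty} = B^{2}\ZZ \to BU$. I would take the classifying map of the tautological line bundle, which has the correct effect on the generator. The same reasoning also explains the failure of an $\mathbf{E}_{\infty}$ refinement: one would need infinite deloopings, which the Bott identification does not provide compatibly. The output is an $\mathbf{E}_{2}$-algebra $q \mapsto \thesphere^{-q\mathbf{C}}$ in graded $S^{1}$-spectra.

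\textbf{Filtered stage.} Let $A = \bigoplus_{q} \thesphere^{-q\mathbf{C}}$ denote this graded algebra. The inclusion $0 \hookrightarrow \mathbf{C}$ gives a class $t \colon \thesphere^{0} \to \thesphere^{\mathbf{C}}$, which I view as a map $\thesphere(1)[\ldots] \to A$ of degree one in the graded world. Filtered objects are modules over the graded polynomial algebra on the Rees generator $\tau$ in degree $-1$. The filtered refinement then amounts to an $\mathbf{E}_{2}$-map from $\thesphere[\tau]$, the free $\mathbf{E}_{\infty}$ (hence $\mathbf{E}_{2}$) graded algebra on one class, to $A$, sending $\tau$ to $t$. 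By freeness, such a map exists once the class $t$ is chosen. This realizes $A$ as an $\mathbf{E}_{2}$-algebra over $\thesphere[\tau]$, that is, as an $\mathbf{E}_{2}$-algebra in $\Fil(\spectra^{BS^{1}})$.

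\textbf{Checks and main obstacle.} Finally I would check that the underlying filtered object is $\thesphere^{-\ast\mathbf{C}}$ with transition maps induced by $0 \hookrightarrow \mathbf{C}$, so that the resulting filtration agrees with (\ref{equation:formula_for_the_tate_construction}). The step I expect to be hardest is the graded stage. There one must identify the $\mathbf{E}_{2}$-structure on $\ZZ \times BU$ coming from direct sum with the double loop structure furnished by Bott periodicity. One must also make sure the freeness argument in the filtered stage is compatible with that $\mathbf{E}_{2}$-structure, rather than only with an underlying $\mathbf{E}_{1}$-structure. I would try first to do both at once by working with the Picard spectrum and $\mathbf{E}_{2}$-maps out of the free grouplike $\mathbf{E}_{2}$-space.
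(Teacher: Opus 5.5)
\textbf{Where the proposal breaks.} Your reduction to an $\mathbf{E}_2$-algebra structure on $S^\ast$ in $\Fil(\spectra^{BS^1})$, followed by the lax symmetric monoidal functor $(-)^{hS^1}$, is exactly the paper's. Your graded stage is also essentially sound. The Bott compatibility you flag as the main difficulty is in fact automatic: for the direct-sum structure, $B^2(\ZZ\times BU)\simeq\Omega^\infty\Sigma^2 ku\simeq\Omega^\infty\tau_{\geq 2}ku\simeq BU$ via the spectrum-level Bott map.

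The gap is the filtered stage, and your closing worry about $\mathbf{E}_1$ versus $\mathbf{E}_2$ is exactly where it lies. The Rees algebra $\thesphere[\tau]\simeq\Sigma^\infty_+\mathbf{N}$ is free on one class only as an $\mathbf{E}_1$-algebra. The free $\mathbf{E}_2$-algebra on a weight-one class has weight-$n$ part $\Sigma^\infty_+ B\mathrm{Br}_n$, and the free $\mathbf{E}_\infty$-algebra has $\Sigma^\infty_+ B\Sigma_n$. Even if it were free as an $\mathbf{E}_\infty$-algebra, that would not make it free as an $\mathbf{E}_2$-algebra.

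So an $\mathbf{E}_2$-map $\thesphere[\tau]\to A$ with $\tau\mapsto t$ is not determined by $t$. It amounts to coherently trivializing the braid group actions on the powers of $t$, which is genuine data and can be obstructed. Moreover, even such a map would only exhibit $A$ as an $\mathbf{E}_2$-algebra \emph{under} $\thesphere[\tau]$. An $\mathbf{E}_2$-algebra in $\Mod_{\thesphere[\tau]}(\mathrm{Gr})\simeq\Fil$, with its relative tensor product, requires an $\mathbf{E}_3$-map from $\thesphere[\tau]$ into the $\mathbf{E}_2$-center of $A$. The compatibility of the transition maps with the multiplication is the whole content of the theorem, and the proposal does not supply it.

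\textbf{An $\mathbf{F}_2$-linear test.} You can check that the step fails as stated by running it $\mathbf{F}_2$-linearly. There $\mathbf{F}_2\otimes\thesphere^{\mathbf{C}}\simeq\mathbf{F}_2[2]$ with trivial action. The Picard spectrum of $\Fun(BS^1,\dcat(\mathbf{F}_2))$ is $H\ZZ$, so the graded algebra $q\mapsto\mathbf{F}_2[-2q]$ is $\mathbf{E}_\infty$. Your freeness argument would then make $\mathbf{F}_2\otimes S^\ast$ an $\mathbf{E}_\infty$-algebra in filtered objects, contradicting \cref{warning:tate_filtration_cannot_be_made_e3_monoidal}. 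Concretely, after applying $(-)^{hS^1}$ one has $Q_2(\tau)=0$ but $Q_2(t)\neq 0$, since $Q_2=Sq^0=\mathrm{id}$ on $H^2(\mathbf{CP}^\infty;\mathbf{F}_2)$. This also shows your explanation of why $\mathbf{E}_\infty$ fails is misplaced: the obstruction sits in the filtered structure, not the graded one.

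\textbf{What the paper does instead.} The paper never chooses $t$ and then extends. It obtains the multiplication and the transition maps simultaneously from Lurie's $\mathbf{E}_2$-coalgebra structure on the filtered space $\{\mathbf{CP}^n\}$, which already encodes the cell inclusions. That coalgebra is lifted over $\mathbf{CP}^\infty$ to filtered $S^1$-spaces $S(\mathbf{C}^{n+1})$ and passed to suspension spectra. The Serre functor, which converts left into right Day convolution, yields $q\mapsto\thesphere^{(q)\mathbf{C}}$. Levelwise duality, Kan extension and inverting $\alpha$ finish the construction.

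To repair your route you would need input of comparable strength. For instance, an $\mathbf{E}_2$-monoidal functor out of the symmetric monoidal \emph{poset} $(\ZZ,+,\leq)$, sending $q\leq q'$ to the maps induced by $0\hookrightarrow\mathbf{C}$, would suffice; a functor out of the discrete group $\ZZ$ does not.
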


The above result is optimal in the sense that $F_{T}^{\ast}({-}^{tS^{1}})$ \emph{cannot} be equipped with an $\mathbf{E}_{3}$-monoidal structure, see  \cref{warning:tate_filtration_cannot_be_made_e3_monoidal}. In fact, such an $\mathbf{E}_{3}$-monoidal structure does not exist even in the $\mathbf{Z}$-linear context. On the other hand, the cellular Tate filtration \emph{can} be made lax symmetric monoidal over the rationals $\mathbf{Q}$ by a result of Bals \cite[{Lemma B.10}]{BalsPeriodicCyclicHomology}. 

We note that there is one more commonly employed filtration on the Tate construction, given by $(\tau_{\geq \ast} X)^{tS^{1}}$, the Tate construction applied to the Whitehead tower of $X$. This filtration, which we will refer to as the \emph{homotopy Tate filtration}, turns out to be closely related to the cellular Tate filtration, as we now explain. 

Let $d \colon \ZZ \rightarrow \ZZ$ be the multiplication-by-two functor, and let $d_{!} \colon \Fil \spectra \rightarrow \Fil \spectra$ be its left Kan extension. Then $d_{!}$ is a fully faithful embedding, whose essential image is given by those filtered spectra which are even in the sense of \cite[{\S 5.3.1}]{van2025introduction}; that is, such that their associated graded vanishes in odd degrees. We write $\Dec \colon \Fil\spectra \rightarrow \Fil\spectra$ for the d\'{e}calage functor of \cite{antieau:decalage}.

\begin{theorem}[{\ref{theorem:decalage_of_evenification_of_bhatt_lurie_tate_is_the_tate_of_postnikov}, \ref{corollary:decalaged_cellular_tate_is_double_of_decalaged_postnikov_tate}}]
\label{theorem:introduction_comparison_of_bt_tate_and_standard_tate}
If $X \in \spectra^{\mathrm{B}S^{1}}$, there are natural equivalences of filtered spectra: 
\begin{enumerate}
    \item $\Dec(d_{!}(F_{T}^{\ast}(X^{tS^{1}}))) \simeq (\tau_{\geq \ast} X)^{tS^{1}}$, 
    \item $d_{!} \Dec(F_{T}^{\ast}(X^{tS^{1}})) \simeq \Dec((\tau_{\geq \ast} X)^{tS^{1}})$.
\end{enumerate}
\end{theorem}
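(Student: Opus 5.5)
We plan to prove (1) directly by computing both sides and identifying them through a universal property of décalage. We then deduce (2) from (1) together with a formal compatibility between $\Dec$ and $d_{!}$.

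\textbf{Step 1: a bifiltered object.} Write $F^{q}_{T}(Y^{tS^1})$ for the cellular filtration applied to a spectrum $Y$ with $S^1$-action. Consider the bifiltered spectrum
\[
(p,q) \mapsto F^{q}_{T}\bigl((\tau_{\geq p} X)^{tS^{1}}\bigr).
\]
This object is functorial in $X$, and its associated bigraded pieces are $\Sigma^{-2q}\pi_{p}X[p]$. Here we use that $\gr_T^q(Y^{tS^1}) \simeq Y[-2q]$ naturally in $Y$, and that $F^q_T$ is exact in $Y$.

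\textbf{Step 2: recall the description of décalage.} Recall from \cite{antieau:decalage} that, for a filtered spectrum $F^{\ast}$, the décalage $\Dec(F)^{n}$ is the limit, or equivalently a diagonal realization, of $\tau_{\geq n-s}$ applied to the filtration. Concretely it is characterised as follows. Its associated graded $\gr^n\Dec(F)$ is $\Sigma^{n}$ of the $n$-th cohomology, in the Beilinson t-structure sense, of $\gr^\ast F$. Here the cochain complex is the $E_1$-page
\[
\pi_{\ast}\gr^{\ast}F
\]
with its $d_{1}$-differential. After applying $d_{!}$, the filtration $d_!F^\ast_T$ has $\gr^{2q} = X[-2q]$ and vanishing odd gradeds. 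The Beilinson-type truncation therefore picks out, in filtration degree $n$, the pieces $\pi_{n+2q}(X)[-2q]$ assembled over $q$. The key claim is that
\[
\Dec(d_!F^\ast_T(X^{tS^1}))^{n} \;\simeq\; \varinjlim_{q} F^{q}_{T}\bigl((\tau_{\geq n}X)^{tS^1}\bigr) \;=\; (\tau_{\geq n}X)^{tS^1}.
\]
To prove it, we would use the formula $\Dec(F)^{n} \simeq \varinjlim_s \tau_{\geq n+s}F^{s}$ (or its variant). We will then show that $\tau_{\geq n+2q}\bigl((X\otimes S^{-q\mathbf C})^{hS^1}\bigr)$ agrees with $\bigl(\tau_{\geq n}X \otimes S^{-q\mathbf C}\bigr)^{hS^1}$ up to terms that die in the colimit. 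This is a connectivity estimate: $S^{-q\mathbf{C}}$ has cells in dimensions $\geq -2q$, so homotopy fixed points of a $k$-connective object twisted by it are $(k-2q)$-connective only in a limit sense. The cleaner route is therefore to compare associated gradeds.

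\textbf{Step 3: build the map and compare gradeds.} We construct a natural filtered map
\[
(\tau_{\geq\ast}X)^{tS^1} \to \Dec(d_!F_T^\ast(X^{tS^1})).
\]
By the universal property of $\Dec$ as right adjoint-type to the "shift" construction, it suffices to produce a map from a suitable shear of $(\tau_{\geq *}X)^{tS^1}$ into $d_!F_T$. This map arises from Step 1's bifiltration by restricting to the diagonal. We then check that it is an equivalence on associated gradeds, where both sides are computed as $\bigoplus_q \Sigma^{-2q}\pi_{n+2q}X[n]$-type objects. The $d_1$-differential on the $E_1$-page of $d_!F_T$ vanishes: it would map even graded degree to odd graded degree, and those gradeds are zero. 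We also check convergence: both filtrations are exhaustive with colimit $X^{tS^1}$, and completeness holds degreewise after unwinding. \textbf{The main obstacle} is this step. We must verify that the map is an equivalence on gradeds rather than merely abstractly so, and control completeness, since $(-)^{tS^1}$ does not commute with the limits that arise. We would first try to reduce to $X$ bounded, using that both sides commute with $\tau_{\geq n}$ in the appropriate sense and that $\gr$ only sees $\pi_\ast X$.

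\textbf{Step 4: deduce (2).} Since $d$ commutes with the shear used in defining $\Dec$, namely $n\mapsto n$ on the $t$-structure side and $2n$ on the filtration side, there is a natural equivalence
\[
\Dec\circ d_! \simeq d_!\circ\Dec \circ(\text{reindexing}).
\]
Concretely, we apply $\Dec$ to (1). Because $\Dec(d_!F)$ is obtained from $d_!\Dec(F)$ by a second décalage that acts trivially on even objects with $d_1=0$, we get
\[
\Dec\bigl((\tau_{\geq\ast}X)^{tS^1}\bigr)\simeq \Dec\Dec(d_!F_T) \simeq d_!\Dec(F_T).
\]
We expect this to be a formal check on associated gradeds, using Antieau's identification of $\gr\Dec$ with the Beilinson cohomology of the $E_1$-page.
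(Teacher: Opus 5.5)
You have the right targets, and your final chain $\Dec((\tau_{\geq\ast}X)^{tS^{1}})\simeq\Dec\Dec(d_{!}F^{\ast}_{T}(X^{tS^{1}}))\simeq d_{!}\Dec(F^{\ast}_{T}(X^{tS^{1}}))$ is the correct one. However, neither part is actually proved.

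\textbf{Part (1).} The missing idea is to identify the Beilinson connective cover directly. For each $n$, the map $d_{!}F^{\ast}_{T}((\tau_{\geq n}X)^{tS^{1}})\to d_{!}F^{\ast}_{T}(X^{tS^{1}})$ \emph{is} the Beilinson $n$-connective cover:
\begin{itemize}
\item The source is Beilinson $n$-connective, because its graded pieces are $(\tau_{\geq n}X)[-q]$ for $q$ even and $0$ for $q$ odd.
\item The cofibre is $d_{!}F^{\ast}_{T}((\tau_{<n}X)^{tS^{1}})$. Its $q$-th term is the homotopy fixed points of a spectrum that is non-equivariantly $(\tau_{<n}X)[-2\lceil q/2\rceil]$. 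Homotopy fixed points preserve coconnectivity, so this term lies in $\spectra_{\leq n-1-q}$, and the cofibre is Beilinson $(n-1)$-coconnective.
\end{itemize}
Since $\Dec(Y)^{n}=|\tau^{B}_{\geq n}Y|$, and the cellular filtration on $(\tau_{\geq n}X)^{tS^{1}}$ has colimit $(\tau_{\geq n}X)^{tS^{1}}$, (1) follows at once, naturally in $n$ and $X$. No graded comparison or completeness argument is needed.

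The route you propose instead does not get there:
\begin{itemize}
\item The formula $\Dec(F)^{n}\simeq\varinjlim_{s}\tau_{\geq n+s}F^{s}$ is false. For a spectrum $Y$ placed in filtration degree $0$, d\'{e}calage gives the Whitehead tower $\tau_{\geq n}Y$, whereas your formula returns $Y$.
\item The comparison map of Step 3 is never constructed. Neither the shear nor the adjunction is specified, and you leave its effect on graded pieces open.
\item Your graded bookkeeping is off. $\pi^{B}_{n}(d_{!}F^{\ast}_{T}(X^{tS^{1}}))$ has $\pi_{n}X$, not $\pi_{n+2q}X$, in every even cochain degree. This matches the fact that $\gr^{n}((\tau_{\geq\ast}X)^{tS^{1}})\simeq(\Sigma^{n}\pi_{n}X)^{tS^{1}}$ involves only $\pi_{n}X$.
\end{itemize}

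\textbf{Part (2).} Your justification is incorrect. The claim $\Dec\circ d_{!}\simeq d_{!}\circ\Dec\circ(\text{reindexing})$ fails. Take $F$ to be a spectrum $Y$ in filtration degree $0$. Then $d_{!}F\simeq F$ and $\Dec(d_{!}F)\simeq\tau_{\geq\ast}Y$, whose odd graded pieces $\Sigma^{2a+1}\pi_{2a+1}Y$ are in general nonzero. But everything in the image of $d_{!}$ has vanishing odd graded pieces.

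The correct identity $\Dec(\Dec(d_{!}F))\simeq d_{!}\Dec(F)$ is also not a formal check on graded pieces; you would first need a map and a convergence argument. What it actually requires is a lemma with two parts:
\begin{itemize}
\item $\Dec(d_{!}\tau^{B}_{\geq a}F)\to\Dec(d_{!}F)$ is a Beilinson $2a$-connective cover;
\item $\pi^{B}_{2a-1}\Dec(d_{!}F)=0$.
\end{itemize}
Both follow from comparing $E^{1}$ of $\Dec(d_{!}F)$ with $E^{2}$ of $d_{!}F$, using the regrading of the spectral sequence under $d_{!}$. This is done after splitting off the non-complete part of $F$, which is a constant filtered spectrum left unchanged by all the functors involved.

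The vanishing of odd Beilinson homotopy makes $\Dec\Dec(d_{!}F)$ even. In even degrees, $\Dec\Dec(d_{!}F)^{2a}\simeq|\Dec(d_{!}\tau^{B}_{\geq a}F)|\simeq|\tau^{B}_{\geq a}F|\simeq\Dec(F)^{a}$, since neither $\Dec$ nor $d_{!}$ changes the colimit. Applying this to $F=F^{\ast}_{T}(X^{tS^{1}})$ together with (1) gives (2).
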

The first equivalence is a minor variation on a result of Antieau, who proved an analogous statement for homotopy fixed points in \cite[{Proposition 9.2}]{antieau:decalage}. The second one is more interesting, in that it follows from the first and a general formula 
\[
\Dec(\Dec(d_{!} X)) \simeq d_{!} \Dec(X)
\]
for the interaction of d\'{e}calage and the doubling functor $d_{!}$, which we prove in \cref{proposition:decalage_and_doubling_interaction}. Either part of \cref{theorem:introduction_comparison_of_bt_tate_and_standard_tate} implies that the spectral sequences associated to $F_{T}^{\ast}(X^{tS^{1}})$ and $(\tau_{\geq \ast} X)^{tS^{1}}$ are isomorphic after a suitable regrading which we make explicit in \cref{corollary:bt_tate_and_standard_tate_have_isomorphic_spectral_sequences}. 

An interesting aspect of \cref{theorem:introduction_comparison_of_bt_tate_and_standard_tate} is that the right-hand side of the equivalence is manifestly lax symmetric monoidal, while $F_{T}^{\ast}(-^{tS^{1}})$ is at most lax $\mathbf{E}_{2}$-monoidal. In other words, the obstruction to making the cellular Tate filtration lax symmetric monoidal vanishes after a single application of d\'{e}calage. 

Since the associated graded of the cellular Tate filtration has a particularly simple form, given by shifts of the object itself, it is often more natural than a filtration obtained from a Postnikov tower, which only makes sense in the presence of a t-structure. One such context is the theory of synthetic spectra, where a natural analogue of the cellular Tate filtration appears in the work of Antieau-Riggenbach \cite[{Proof of Lemma 2.75}]{antieau2024cyclotomic}. 

We make the construction of the synthetic Tate filtration explicit in \cref{definition:tate_filtration_in_synthetic_spectra}, and we calculate its colimit and the associated graded object. As a concrete application of this circle of ideas and \cref{theorem:introduction_comparison_of_bt_tate_and_standard_tate}, we compare the HKR filtrations on negative and periodic cyclic homology previously constructed by Antieau, Bhatt--Lurie, and Raksit \cite{antieau2019periodic, bhatt-lurie:apc, raksit:2020}. It is a folklore result that these three constructions agree, but, to our knowledge, a complete proof has not previously appeared in the literature. We provide such a proof here:

\begin{theorem}[{\ref{theorem:antieau_bhattlurie_raksit_hkr_filtrations_on_hcminus_are_equivalent}}]
\label{theorem:introduction_hkr_comparison}
Let $A$ be an animated $k$-algebra. Then there are equivalences, natural in $A$,
\[
F^{\ast}_{\AntHKR} F_T^* \HP(A / k) \simeq F^{\ast}_{\BLHKR} F_T^*\HP(A / k) \simeq F^*_T F^{\ast}_{\RakHKR} \HP(A / k) \in \BiFil(\dcat(k)), 
\]
where the second equivalence exchanges the filtrations. In particular, the three HKR filtrations on $\HC^-$ and $\HP$ coincide.
\end{theorem}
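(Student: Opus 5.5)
We need a plan for the HKR comparison theorem (\cref{theorem:introduction_hkr_comparison}).

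The three HKR filtrations are built differently. Antieau's filtration on $\HC^-$ and $\HP$ comes from applying the homotopy (Postnikov) Tate/fixed point filtration in a Beilinson-type t-structure, combined with d\'{e}calage. Bhatt--Lurie's comes from the cellular Tate filtration applied to the HKR-filtered Hochschild complex. Raksit's comes from the universal filtered derived commutative algebra with circle action, namely the derived de Rham complex $\mathrm{L}\Omega_{A/k}$ with its Hodge filtration. My plan is to identify all three as the same bifiltered object.

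\textbf{Step 1: reduce to polynomial algebras.} All three constructions commute with sifted colimits in $A$ after suitable completion, or are defined by left Kan extension from polynomial $k$-algebras. So it suffices to construct equivalences natural on the category of finitely generated polynomial $k$-algebras, compatible with the bifiltrations, and then extend. The obstacle here is completeness: $\HC^-$ and $\HP$ involve limits that do not commute with sifted colimits. I would therefore argue on the level of the bifiltered object $F_T^\ast F^\ast_{\mathrm{HKR}}\HH(A/k)$, where each bigraded piece is computed by colimits. I would then show that the completion process used by each author is the same on both sides.

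\textbf{Step 2: reduce to a filtered circle-equivariant object.} On polynomial algebras, $\HH(A/k)$ has a Postnikov filtration that is already the HKR filtration. That is, the $S^1$-equivariant filtered object $\tau_{\geq \ast}\HH(A/k)$ has graded pieces $\Omega^i_{A/k}[i]$. Bhatt--Lurie and Raksit both produce $S^1$-equivariant HKR filtrations. Raksit's universal property identifies his filtered object with $\tau_{\geq\ast}$ on polynomial algebras, and Bhatt--Lurie use the same filtration. Hence the filtered $S^1$-objects agree, and $F^\ast_{\BLHKR}$ and Raksit's filtration agree once we apply the cellular Tate filtration $F_T^\ast$. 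This is the second equivalence. It swaps the order because Raksit takes $F_T$ of a filtered $S^1$-object, whereas Bhatt--Lurie regard the HKR filtration as a filtration on the cellular-filtered object; the swap is a formal exchange of the two indices in $\BiFil$. Here I use the synthetic/filtered variant of the cellular Tate filtration from \cref{definition:tate_filtration_in_synthetic_spectra}, applied in filtered $k$-modules, so that it acts on the filtered $S^1$-object.

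\textbf{Step 3: compare with Antieau via \cref{theorem:introduction_comparison_of_bt_tate_and_standard_tate}.} Antieau's filtration is obtained from $(\tau_{\geq\ast}\HH)^{tS^1}$, up to the appropriate décalage and regrading. Applying the filtered version of \cref{theorem:introduction_comparison_of_bt_tate_and_standard_tate} inside filtered $k$-modules identifies $\Dec(d_!F_T^\ast)$ of the HKR-filtered object with Antieau's Postnikov-type construction. The resulting bifiltration then matches $F^\ast_{\AntHKR}F_T^\ast\HP$. I expect this step to be the main obstacle. It requires checking that Antieau's indexing conventions, including the Beilinson t-structure and the doubling $d_!$, line up exactly with the synthetic Tate filtration. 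It also requires that the equivalences of \cref{theorem:introduction_comparison_of_bt_tate_and_standard_tate} are natural enough to pass through left Kan extension and completion. I would first verify on associated bigraded pieces, where everything reduces to shifts $\Omega^i_{A/k}[i-2j]$, and then use the natural comparison map to conclude.
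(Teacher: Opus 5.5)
The gap is in Step 2. You describe the Bhatt--Lurie filtration as the cellular Tate filtration applied to an HKR-filtered $\HH$. You then conclude that it agrees with Raksit's because both start from $\tau_{\geq *}\HH(P/k)$. But Bhatt--Lurie's filtration is $\tau^{B}_{\geq *}(F^{\ast}_{T}(\HH(P/k)^{tS^{1}}))$: the Beilinson--Whitehead tower \emph{in the Tate direction} of the cellular Tate filtration on the unfiltered $\HH(P/k)$. No filtration on $\HH$ enters. (Likewise, Raksit's object is $\HH_{\mathrm{fil}}(A/k)$, the initial filtered derived commutative algebra with filtered-circle action. The derived de Rham complex only appears in its associated graded.)

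So even granting that you use the synthetic Tate filtration, the content of the second equivalence is exactly the step you treat as tautological. One must show that for $P$ polynomial the canonical map $F^{\ast}_{T}F^{s}_{\RakHKR}\HP(P/k) \to F^{\ast}_{T}\HP(P/k)$, induced by $|\HH_{\mathrm{fil}}(P/k)| \simeq \HH(P/k)$, exhibits the source as the Beilinson $s$-connective cover of the target. This is not formal. Applying the ordinary cellular Tate filtration to each $\tau_{\geq r}\HH$ gives graded pieces $(\tau_{\geq r}\HH)[-2q]$. That is Antieau's bifiltered \emph{input} $F^{r}_{H}F^{q}_{T}\HP$, not a Beilinson cover.

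The missing ingredient is \cref{proposition:associated_graded_of_the_synthetic_tate_filtration}, $\gr^{n}_{T}(X^{t\evencircle}) \simeq X[-2n](-n)$. It is proved using the equivariant perfect even filtration, faithful even flatness of $\thesphere[S^{1}] \to \MU$, and the self-duality $\evencircle^{\vee} \simeq \evencircle[-1](-1)$. Its weight twist puts $F^{s+q}_{\RakHKR}\HH[-2q] \simeq (\tau_{\geq s+q}\HH)[-2q] = \tau_{\geq s-q}(\gr^{q}_{T}\HP)$ in HKR degree $s$ and Tate degree $q$. This is precisely the Beilinson cover (\cref{lemma:synthetic_tate_filtered_raksits_hp_is_a_beilinson_tower_for_polynomial_rings}, using Raksit's Remark 6.2.10).

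The same formula is what your Step 1 needs on Raksit's side, which is not defined by left Kan extension. The bifiltered object is bicomplete (\cref{lemma:tate_filtered_raksits_hp_is_bicomplete}). Its bigraded pieces $\gr^{r+s}_{\RakHKR}\HH(A/k)[-2s]$, which are shifts of $\Lambda^{r+s}L_{A/k}$, preserve sifted colimits. Hence the natural map out of the left Kan extension defining the Bhatt--Lurie filtration is an equivalence.

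Step 3 aims at the wrong target. No ``filtered version'' of \cref{theorem:introduction_comparison_of_bt_tate_and_standard_tate} matched against the synthetic Tate filtration is needed. Antieau should be compared directly with Bhatt--Lurie for the plain $S^{1}$-spectrum $\HH(P/k)$. Part (1) of \cref{theorem:decalage_of_evenification_of_bhatt_lurie_tate_is_the_tate_of_postnikov} identifies $d_{!}$ (in the Tate direction) of Antieau's input $F^{\ast}_{H}F^{\ast}_{T}\HP(P/k)$ with the Beilinson tower of $d_{!}F^{\ast}_{T}(\HH(P/k)^{tS^{1}})$. The \emph{bifiltered} refinement of $\Dec(\Dec(d_{!}X)) \simeq d_{!}\Dec(X)$ in \cref{remark:bifiltered_refinement_of_the_interaction_between_decalage_and_doubling} then turns Antieau's double-speed d\'ecalage in the homotopy direction into Bhatt--Lurie's single-speed d\'ecalage in the Tate direction, while keeping the Tate index. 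Part (2) of \cref{theorem:introduction_comparison_of_bt_tate_and_standard_tate} on its own only gives this after collapsing the Tate filtration.

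Passing to animated $A$ then requires specific completeness statements, not a general claim that ``the completions agree'':
\begin{itemize}
\item Antieau's Lemmas 4.8--4.9: the left Kan extension into objects complete in the Tate and homotopy directions agrees with the one into objects complete in the Tate and Beilinson-homotopy directions.
\item Bhatt--Lurie's Remark 6.3.5: the resulting colimit can be computed among Tate-complete objects.
\end{itemize}
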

Here, the filtration $F^{\ast}_{T}F^{\ast}_{\RakHKR}$ is the synthetic Tate filtration applied to Raksit's filtered variant of the Tate construction. The result for $\HC^{-}$ is obtained by restricting the filtration to non-negative Tate degrees. For a recollection on the construction of these three filtrations, as well as the notation we use in the statement above, see \cref{%
  recollection:antieaus_hkr,%
  recollection:bhatt_lurie_hkr_filtration,%
  recollection:raksit_hkr_filtration%
}.

We note that, unlike for the ordinary variant, we do not show in this paper that the formation of the synthetic Tate filtration can be made $\mathbf{E}_{2}$-lax monoidal. We believe that the natural way to do so would be to make a certain variant of the circle-equivariant perfect even filtration lax symmetric monoidal, and considering this topic here would take us too far afield. We make a precise statement we expect in \cref{conjecture:monoidality_of_the_synthetic_tate_filtration}. 

\subsection{Sketch of the monoidality argument} 
\label{subsection:introduction_sketch_of_the_monoidality_argument}

In the rest of the introduction, we outline the argument behind \cref{theorem:introduction_bt_tate_filtration_is_lax_e2_monoidal}. Since $F_{T}^{\ast}(-^{tS^{1}})$ is defined as the composite of tensoring with the filtered $S^{1}$-spectrum 
\begin{equation}
\label{equation:introduction_filtered_e2_algebra_defining_the_bt_tate_filtration}
S^{q} \colonequals \thesphere^{(-q) \mathbf{C}}
\end{equation}
and taking homotopy fixed points, and since the latter is lax symmetric monoidal, to prove the statement it is enough to make $S^{\ast}$ into an $\mathbf{E}_{2}$-algebra object of $\Fil(\spectra^{hS^{1}})$. 

The starting point is a construction of Lurie that makes the association $n \mapsto \mathbf{CP}^{n}$ into an $\mathbf{E}_{2}$-coalgebra object of $\Fun(\mathbf{N}, \spaces)$, where we consider the latter as equipped with Day convolution. In \cref{lemma:coalgebras_in_filtered_spaces_lift_to_coalgebras_in_spaces_over_their_colimit}, we show that this can be uniquely lifted to a filtered coalgebra object in spaces over $\varinjlim \mathbf{CP}^{n} \simeq \mathbf{CP}^{\infty} \simeq \mathrm{B}S^{1}$. Passing to suspension spectra, this yields an $\mathbf{E}_{2}$-coalgebra object of $\Fun(\mathbf{N}, \spectra^{hS^{1}})$ given by 
\begin{equation}
\label{equation:introduction_filtered_coalgebra_given_levelwise_by_odd_spheres}
n \mapsto \Sigma^{\infty}_{+}(\mathrm{fib}(\mathbf{CP}^{n} \rightarrow \mathbf{CP}^{\infty})) \simeq \Sigma^{\infty}_{+} S^{2n+1},
\end{equation}
where $S^{1} \simeq U(1)$ acts on $S^{2n+1}$ by identifying the latter with the unit sphere of $\mathbf{C}^{n+1}$. 

The $\infty$-category $\Fun(\mathbf{N}, \spectra^{hS^{1}})$ is dualizable over $\spectra^{hS^{1}}$, and so admits a \emph{Serre functor} 
\[
\mathrm{Serre} \colon \Fun(\mathbf{N}, \spectra^{hS^{1}}) \rightarrow \Fun(\mathbf{N}, \spectra^{hS^{1}}). 
\]
In \cref{proposition:existence_of_right_day_for_fun_n_d_with_d_finitely_complete}, we verify that in addition to the usual left Day convolution (defined in terms of colimits), $\Fun(\mathbf{N}, \spectra^{hS^{1}})$ also admits a right Day convolution symmetric monoidal structure (defined in terms of limits). The Serre functor is symmetric monoidal when we consider the source with respect to left Day convolution and the target with right Day convolution (\cref{proposition:serre_functor_on_fun_n_c_interchanges_left_and_right_day_convolution}). 

It follows that the image of (\ref{equation:introduction_filtered_coalgebra_given_levelwise_by_odd_spheres}) is an $\mathbf{E}_{2}$-coalgebra object of $\Fun(\mathbf{N}, \spectra^{hS^{1}})$ with respect to the right Day convolution. Taking Spanier-Whitehead duals levelwise yields an $\mathbf{E}_{2}$-\emph{algebra} object of $\Fun(\mathbf{N}^{\op}, \spectra^{hS^{1}})$, now with respect to the usual (left) Day convolution. 

Finally, left Kan extending along $\mathbf{N}^{\op} \rightarrow \mathbf{Z}^{\op}$ and inverting an appropriate element in filtered homotopy yields an $\mathbf{E}_{2}$-algebra whose underlying filtered $S^{1}$-spectrum can be identified with (\ref{equation:introduction_filtered_e2_algebra_defining_the_bt_tate_filtration}). This ends the argument. 

\subsection{Conventions}

If $\ccat$ is an $\infty$-category, then a \emph{filtered object} of $\ccat$ is a functor $F^{\ast} X \colon \ZZ^{\op} \rightarrow \ccat$, where we view $\ZZ$ as a poset. The \emph{associated graded} is given by $\gr^{q}X \colonequals \mathrm{cofib}(F^{q+1}X \rightarrow F^{q}X)$. The \emph{underlying object} is the colimit $|F^{\ast} X| \colonequals \varinjlim F^{q} X$. We write $\Fil \ccat \colonequals \Fun(\ZZ^{\op}, \ccat)$ and $\BiFil \ccat \colonequals \Fun(\ZZ^{\op} \times \ZZ^{\op}, \ccat)$ for the $\infty$-categories of filtered and bifiltered objects. 

\subsection{Acknowledgements}

We would like to thank Ben Antieau, Tess Bouis, Sanath Devalapurkar and Jacob Lurie for helpful discussions related to this project. 

\section{Preliminaries} 

\subsection{Serre duality of filtered spectra}

In this subsection, we give a formula for the Serre functor of the $\infty$-category of filtered objects. 

\begin{recollection}
Let $\vcat$ be a presentably symmetric monoidal stable $\infty$-category and $\ecat \in \Mod_{\vcat}(\Pr^{L})$ be a dualizable $\vcat$-module, so that we have an evaluation morphism $\mathrm{ev} \colon \ecat \otimes_{\vcat} \ecat^{\vee} \rightarrow \vcat$. This is a morphism in $\Pr^L$, so it has a right adjoint $\mathrm{ev}^{R}$. The \emph{Serre functor} of $\ecat$ is the $\vcat$-linear cocontinuous functor
\[
\mathrm{Serre} \colon \ecat \rightarrow \ecat
\]
corresponding to $\mathrm{ev}^{R}(\mathbf{1}_{\vcat})$ under the equivalence $\ecat \otimes_{\vcat} \ecat^{\vee} \simeq \Fun^{L}_{\vcat}(\ecat, \ecat)$.
\end{recollection}

\begin{example}
\label{example:serre_functor_of_a_v_linear_presheaf_infty_category}
Let $\ccat$ be a small $\infty$-category, and let $\presheaves(\ccat; \vcat) \colonequals \Fun(\ccat^{\op}, \vcat)$ be the $\infty$-category of $\vcat$-valued presheaves. The latter is dualizable over $\vcat$, with dual $\presheaves(\ccat^{\op}, \vcat)$, and under the equivalence
\[
\presheaves(\ccat; \vcat) \otimes_{\vcat} \presheaves(\ccat^{\op}; \vcat) \simeq \presheaves(\ccat \times \ccat^{\op}; \vcat), 
\]
the right adjoint $\mathrm{ev}^{R} \colon \vcat \rightarrow \presheaves(\ccat \times \ccat^{\op}, \vcat)$ of the evaluation is given by the formula 
\[
\ev^{R}(v) \simeq v^{\map_{\ccat}(c_{2}, c_{1})}, 
\]
where $(-)^{\map_{\ccat}(c_{2}, c_{1})}$ is the cotensor, i.e. the limit of the constant diagram. It follows that the Serre functor 
\[
\mathrm{Serre} \colon \presheaves(\ccat; \vcat) \rightarrow \presheaves(\ccat; \vcat) 
\]
is the unique $\vcat$-linear functor such that the composite with the $\vcat$-linear Yoneda embedding 
\[
\begin{tikzcd}
	{\ccat} & {\presheaves(\ccat)} & {\presheaves(\ccat) \otimes_{\spaces} \vcat} & {\presheaves(\ccat; \vcat) }
	\arrow["y", from=1-1, to=1-2]
	\arrow["{y_{\vcat}}"', bend right=15, from=1-1, to=1-4]
	\arrow[from=1-2, to=1-3]
	\arrow[from=1-3, to=1-4]
\end{tikzcd}
\]
is given by $\mathrm{Serre}(y_{\vcat}(c)) \simeq \mathbf{1}_{\vcat}^{\map_{\ccat}(c, -)}$. In particular, 
\begin{equation}
\label{equation:formula_for_serre_of_v_presheaves}
\underline{\map}_{\presheaves(\ccat; \vcat)}(y_{\vcat}(d), \mathrm{Serre}(y_{\vcat}(c))) \simeq \mathbf{1}_{\vcat}^{\map_{\ccat}(c, d)} \simeq (\mathbf{1}_{\vcat}[\map_{\ccat}(c, d)])^{\vee} \simeq \underline{\map}_{\presheaves(\ccat; \vcat)}(y_{\vcat}(c), y_{\vcat}(d))^{\vee}, 
\end{equation}
where $\underline{\map}$ denotes the $\vcat$-mapping object, $\mathbf{1}_{\vcat}[\map_{\ccat}(c, d)]$ is the tensor and $(-)^{\vee}$ is the $\vcat$-monoidal dual. 
\end{example}

\begin{remark}
When $\vcat = \Mod_{A}(\spectra)$ is the $\infty$-category of modules over a commutative ring spectrum $A$, then objects of the form $y_{\vcat}(c) \in \presheaves(\ccat; \vcat)$ are compact and generate all compact objects as a thick subcategory. In this case, (\ref{equation:formula_for_serre_of_v_presheaves}) implies that for any compact $X, Y \in \presheaves(\ccat; \vcat)$ we have an equivalence of $A$-modules 
\[
\map_{\presheaves(\ccat; \vcat)}(X, \mathrm{Serre}(Y)) \simeq \map_{\presheaves(\ccat; \vcat)}(Y, X)^{\vee},
\]
where the right-hand side is the $A$-linear dual, as observed in \cite[{Remark 11.1.5.2}]{SAG}. 
\end{remark}

\begin{proposition}
\label{proposition:formula_for_the_serre_functor_of_filtered_spectra}
Let $\ccat$ be a presentably symmetric monoidal stable $\infty$-category. Then the Serre functor of $\Fil(\ccat) = \presheaves(\mathbf{Z}; \ccat)$ is given by the formula 
\[
(\mathrm{Serre}(X))_{q} \simeq \mathrm{cofib}(X_{q+1} \rightarrow X_{-\infty}), 
\]
where $X_{-\infty} \colonequals \varinjlim X$. 
\end{proposition}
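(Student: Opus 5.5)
The plan is to use \cref{example:serre_functor_of_a_v_linear_presheaf_infty_category} with $\vcat = \ccat$ and the poset $\mathbf{Z}$. Define a candidate functor
\[
F \colon \Fil(\ccat) \rightarrow \Fil(\ccat), \qquad F(X)_{q} \colonequals \mathrm{cofib}(X_{q+1} \rightarrow X_{-\infty}).
\]
It is built from evaluations, the colimit $\varinjlim$, and cofibres. Each of these is $\ccat$-linear and preserves colimits, so $F$ is a $\ccat$-linear cocontinuous endofunctor. Such functors are determined by their restriction along the $\ccat$-linear Yoneda embedding $y_{\ccat} \colon \mathbf{Z} \rightarrow \Fil(\ccat)$. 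It therefore suffices to give an equivalence of functors $\mathbf{Z} \rightarrow \Fil(\ccat)$ between $F \circ y_{\ccat}$ and $\mathrm{Serre} \circ y_{\ccat}$. By the example, the latter is $n \mapsto \mathbf{1}^{\map_{\mathbf{Z}}(n, -)}$.

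First I would compute both sides on representables. We have $y_{\ccat}(n)_{q} \simeq \mathbf{1}[\map_{\mathbf{Z}}(q,n)]$, which is $\mathbf{1}$ for $q \leq n$ and $0$ otherwise, and its colimit is $\mathbf{1}$. Hence
\[
F(y_{\ccat}(n))_{q} \simeq \mathrm{cofib}\bigl(\mathbf{1}[\map_{\mathbf{Z}}(q+1,n)] \rightarrow \mathbf{1}\bigr).
\]
This is $\mathbf{1}$ for $q \geq n$ and $0$ otherwise. On the other side, $\mathbf{1}^{\map_{\mathbf{Z}}(n,q)}$ is $\mathbf{1}$ exactly when $n \leq q$. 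So the two sides agree pointwise.

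The main obstacle is that pointwise agreement does not determine a functor out of $\mathbf{Z}$ into an $\infty$-category. I need a comparison map that is natural in both variables, i.e.\ a map of functors $\mathbf{Z} \times \mathbf{Z}^{\op} \rightarrow \ccat$ in $(n,q)$. I would build it as follows.
\begin{enumerate}
\item The constant map $\map_{\mathbf{Z}}(n,q) \rightarrow \ast$ induces the diagonal $\mathbf{1} \simeq \mathbf{1}^{\ast} \rightarrow \mathbf{1}^{\map_{\mathbf{Z}}(n,q)}$. This is natural in $(n,q)$.
\item Precompose with $\mathbf{1}[\map_{\mathbf{Z}}(q+1,n)] \rightarrow \mathbf{1}$. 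Because the tensor and cotensor are adjoint, the composite corresponds to a map out of $\mathbf{1}[\map_{\mathbf{Z}}(q+1,n) \times \map_{\mathbf{Z}}(n,q)]$, again naturally in $(n,q)$.
\item This product of spaces is empty, since $q+1 \leq n \leq q$ is impossible. So the composite factors naturally through $\mathbf{1}[\emptyset] \simeq 0$, and this factorization is a nullhomotopy.
\item The nullhomotopy induces a natural map from the cofibre, $F(y_{\ccat}(n))_{q} \rightarrow \mathbf{1}^{\map_{\mathbf{Z}}(n,q)}$.
\end{enumerate}
Finally, I would check that this map is an equivalence objectwise. In each case one of the two mapping spaces is empty and the other is contractible, and in both cases the cofibre sequence is visibly $0 \rightarrow \mathbf{1} \rightarrow \mathbf{1}$ or $\mathbf{1} \rightarrow \mathbf{1} \rightarrow 0$. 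This gives the required equivalence $F \circ y_{\ccat} \simeq \mathrm{Serre} \circ y_{\ccat}$. By the universal property of $\Fil(\ccat) \simeq \presheaves(\mathbf{Z}) \otimes \ccat$ as the free cocomplete $\ccat$-linear category on $\mathbf{Z}$, it extends to $F \simeq \mathrm{Serre}$.
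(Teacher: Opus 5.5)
Your proposal is correct and follows the paper's route. Like the paper, you compute the Serre functor on the representables $y_{\ccat}(n)$ via the formula from the example on $\vcat$-valued presheaves, match it with $\mathrm{cofib}(y_{\ccat}(n)_{q+1} \to \mathbf{1})$, and extend by $\ccat$-linearity. Your explicit construction of the comparison naturally in $(n,q)$, which the paper leaves implicit, is a welcome refinement.

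One point in that construction needs tightening. The product $\map_{\mathbf{Z}}(q+1,n) \times \map_{\mathbf{Z}}(n,q)$ has mixed variance in $n$ and $q$, so it is not itself a functor of $(n,q)$. Your step~3 is better justified as follows. On $\mathbf{Z} \times \mathbf{Z}^{\op}$, the source $\mathbf{1}[\map_{\mathbf{Z}}(q+1,n)]$ is supported on the upward-closed set $\{q<n\}$, and the target $\mathbf{1}^{\map_{\mathbf{Z}}(n,q)}$ is supported on the disjoint downward-closed set $\{n \leq q\}$. Hence the space of natural transformations between them is contractible, which gives the canonical nullhomotopy.
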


\begin{proof}
As a $\ccat$-linear $\infty$-category, $\Fil(\ccat)$ is generated by objects of the form $y_{\ccat}(n)$ where $n \in \mathbf{Z}$, explicitly given by 
\[
y_{\ccat}(n)_{q} \colonequals
    \begin{cases}
      \mathbf{1}_{\ccat}, & \text{if } q \leq n \\
      0, & \text{otherwise}
    \end{cases}
    \]
Using (\ref{equation:formula_for_serre_of_v_presheaves}) we see that 
\[
\mathrm{Serre}(y_{\ccat}(n))_{q} \simeq \underline{\map}(y_{\ccat}(q), \mathrm{Serre}(y_{\ccat}(n))) \simeq \underline{\map}(y_{\ccat}(n), y_{\ccat}(q))^{\vee} \simeq \begin{cases}
      {\mathbf{1}_{\ccat}}, & \text{if } q \geq n \\
      0, & \text{otherwise}
    \end{cases}, 
\]
Thus, the formula in the statement holds for $y_{\ccat}(n)$. As both sides are $\ccat$-linear, we deduce that it holds in general. 
\end{proof} 

\begin{remark}
\label{remark:serre_functor_of_non_positively_filtered_objects}
By the same argument as in the proof of \cref{proposition:formula_for_the_serre_functor_of_filtered_spectra}, an analogous formula holds for the Serre functor of $\presheaves(\mathbf{Z}_{\leq 0}; \ccat)$. More precisely, for each $q \leq 0$ we have 
\[
(\mathrm{Serre}(X))_{q} \simeq \mathrm{cofib}(X_{q+1} \rightarrow X_{-\infty}),
\]
where $X_{1}=0$ by convention. In particular, this implies that the Serre functor of $\presheaves(\mathbf{Z}_{\leq 0}; \ccat) \simeq \Fun(\mathbf{N}, \ccat)$ is an equivalence. 
\end{remark}

\subsection{Right Day convolution}

In this subsection, we record the observation that if $\ccat$ is a symmetric monoidal $\infty$-category with finite limits whose tensor product preserves finite limits in each variable separately, then $\Fun(\mathbf{N}, \ccat)$ admits a \emph{right} Day convolution symmetric monoidal structure. The latter is defined using limits in a way dual to the much more common left Day convolution. 

For example, this observation applies to any $\ccat$ which is stable and presentably symmetric monoidal. In this case, $\Fun(\mathbf{N}, \ccat)$ admits both the left and right Day convolution, and we show that the two are equivalent to each other with the equivalence provided by the Serre functor of $\Fun(\mathbf{N}, \ccat)$. 

\begin{recollection}[Left Day convolution]
Let $\ccat, \dcat$ be symmetric monoidal $\infty$-categories, with $\ccat$ small and $\dcat$ cocomplete. Given $X, Y \in \Fun(\ccat, \dcat)$, we can define the \emph{left} Day convolution by the formula 
\begin{equation}
\label{equation:left_day_convolution_formula}
(X \otimes_{\Fun(\ccat, \dcat)}^{L} Y)(c) \colonequals \varinjlim _{c_{1} \otimes_{\ccat} c_{2} \rightarrow c} X(c_{1}) \otimes Y(c_{2}),
\end{equation}
where the colimit is taken over $(c_{1}, c_{2}) \in (\ccat \times \ccat)_{/c}$. In other words, the left Day convolution is given by the left Kan extension 
\[
\begin{tikzcd}
	{\ccat \times \ccat} & {\dcat \times \dcat} & \dcat \\
	C & {}
	\arrow["{X \times Y}", from=1-1, to=1-2]
	\arrow["{\otimes_{\ccat}}", from=1-1, to=2-1]
	\arrow["{\otimes_{\dcat}}", from=1-2, to=1-3]
	\arrow[dotted, from=2-1, to=1-3]
\end{tikzcd}.
\]
\end{recollection}
As shown by Glasman \cite{glasman2017day}, if $\otimes_{\dcat} \colon \dcat \times \dcat \rightarrow \dcat$ preserves small colimits in each variable separately, then left Day convolution extends naturally to a symmetric monoidal structure on $\Fun(\ccat, \dcat)$. The monoidal unit $\mathbf{1}_{\Fun(\ccat, \dcat)}^{L}$ is given by the formula 
\begin{equation}
\label{equation:unit_of_day_convolution}
\mathbf{1}_{\Fun(\ccat, \dcat)}^{L}(c) \colonequals \varinjlim_{\mathbf{1}_{\ccat} \rightarrow c} \mathbf{1}_{\dcat}, 
\end{equation}
where the colimit is taken over the constant diagram indexed by $(\ccat^{\times 0})_{/c} \simeq \Map_{\ccat}(\mathbf{1}_{\ccat}, c)$. In \cite{winges2026hinich}, Winges shows that the assumptions on the existence and preservation of colimits can be severely weakened. The result is proven for general $\infty$-operads, but in the symmetric monoidal case relevant to us the only colimits that have to exist are those that appear in the formulas (\ref{equation:left_day_convolution_formula}) and (\ref{equation:unit_of_day_convolution}). 

\begin{theorem}[Winges]
\label{theorem:existence_of_left_day_convolution}
Let $\ccat, \dcat$ be symmetric monoidal $\infty$-categories. Suppose that for each $c \in \ccat$, $\dcat$ admits colimits indexed by
\begin{enumerate}
\item $\Map_{\ccat}(\mathbf{1}_{\ccat}, c)$, 
\item $(\ccat \times \ccat)_{/c}$
\end{enumerate}
and assume furthermore that such colimits are preserved by $\otimes_{\dcat}$ in each variable separately. Then (\ref{equation:left_day_convolution_formula}) extends to a symmetric monoidal structure on $\Fun(\ccat, \dcat)$. 
\end{theorem}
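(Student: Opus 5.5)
The idea is to derive the statement from the standard construction of Day convolution, with Winges' operadic framework replacing Glasman's colimit hypotheses. Concretely, I would model a symmetric monoidal structure on $\Fun(\ccat, \dcat)$ as an exponentiable fibration. Let $\ccat^{\otimes} \to \mathrm{Fin}_{\ast}$ and $\dcat^{\otimes} \to \mathrm{Fin}_{\ast}$ be the cocartesian fibrations encoding the two monoidal structures. Form the relative functor category
\[
\Fun(\ccat, \dcat)^{\otimes} \to \mathrm{Fin}_{\ast},
\]
whose fibre over $\langle n \rangle$ is $\Fun(\ccat^{\times n}, \dcat^{\times n})$ restricted to "diagonal" functors, i.e.\ the fibre product $\Fun(\ccat,\dcat)^{\times n}$. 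This is the $\infty$-operad whose algebras are lax monoidal functors. It is automatically an $\infty$-operad, with the lax structure given by a natural transformation $X \otimes Y \Rightarrow Z \circ \otimes_{\ccat}$.

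The plan is then to show that this operad is in fact cocartesian over $\mathrm{Fin}_{\ast}$. This means producing, for each active map $\langle n \rangle \to \langle 1 \rangle$ and each tuple $(X_{1}, \ldots, X_{n})$, a cocartesian lift, which is the left Kan extension of $\otimes_{\dcat} \circ (X_{1} \times \cdots \times X_{n})$ along $\otimes_{\ccat}^{n} \colon \ccat^{\times n} \to \ccat$. For inert maps, lifts exist trivially.

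Two points need care here.

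\textbf{Reduction to the binary and nullary cases.} Existence of the left Kan extension for all $n$ should reduce to the cases $n = 0$ and $n = 2$. The $n = 0$ case is hypothesis (1), the colimit over $\Map(\mathbf{1}_{\ccat}, c)$. For $n = 2$, the pointwise colimit formula involves the slice $(\ccat \times \ccat)_{/c}$, which is hypothesis (2). I would then iterate: the $n$-fold convolution is built as $(X_{1} \otimes X_{2}) \otimes X_{3}$, and so on. The fact that $\otimes_{\dcat}$ preserves the relevant colimits in each variable lets me rewrite the iterated colimit as a single colimit over $(\ccat^{\times 3})_{/c}$, via a cofinality argument on the composite slice fibration $(\ccat^{\times 3})_{/c} \to (\ccat \times \ccat)_{/c}$. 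The same argument shows that these pointwise Kan extensions compose. This is exactly the condition that cocartesian edges compose, which is the criterion (as in Lurie's HA, 2.2.6 / 3.1.3) for an operad to be a symmetric monoidal $\infty$-category, given that "operadic left Kan extensions exist and compose."

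\textbf{The main obstacle.} This is the cofinality/composition step: justifying that the limited colimit hypotheses suffice for all the iterated and higher-arity colimits, without assuming $\dcat$ is cocomplete. I would handle it by checking that, for $\langle n \rangle \to \langle 1 \rangle$, the relevant indexing $\infty$-category decomposes as an iterated Grothendieck construction. Each stage then requires only colimits of type (1) or (2), and these are commuted past $\otimes_{\dcat}$ using the preservation assumption. Units are treated the same way via (1).

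Granting this, the cocartesian fibration $\Fun(\ccat,\dcat)^{\otimes}$ exhibits the symmetric monoidal structure. Its binary tensor is, by construction, given pointwise by formula (\ref{equation:left_day_convolution_formula}), and its unit by (\ref{equation:unit_of_day_convolution}). This establishes \cref{theorem:existence_of_left_day_convolution}, and it is in substance Winges' general result specialized to symmetric monoidal $\ccat$.
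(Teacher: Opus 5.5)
Your proposal is correct, but it is organized differently from the paper, which does not touch the operadic machinery at all. The paper cites Winges' Proposition 4.1. That result assumes colimits of \emph{arbitrary} diagrams indexed by $(\ccat^{\times n})_{/c}$ exist and are preserved by $\otimes_{\dcat}$ for every $n \geq 0$, and the paper only checks that (1) and (2) imply this. It does so by factoring $(\ccat^{\times n})_{/c} \to \mathrm{pt}$ through the maps $(\ccat^{\times k+1})_{/c} \to (\ccat^{\times k})_{/c}$ that multiply the first two factors. The pointwise left Kan extensions along these maps are colimits over $(\ccat \times \ccat)_{/b_1} \times \ccat_{/b_2} \times \cdots \times \ccat_{/b_k}$, in which $(\ccat \times \ccat)_{/b_1} \times \{\mathrm{id}\}$ is cofinal.

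You instead rebuild the cocartesian-fibration argument yourself. That makes the proof self-contained, but you should say precisely what must be checked. Composites of cocartesian edges are automatically cocartesian, so the real point is that the locally cocartesian binary and nullary lifts are cocartesian. Concretely, they must stay universal after adjoining extra inputs $Z_1, \ldots, Z_k$: the left Kan extension along $\otimes_{\ccat} \times \mathrm{id}^{k}$ of $X_1(c_1) \otimes X_2(c_2) \otimes Z_1(d_1) \otimes \cdots \otimes Z_k(d_k)$ must be $(X_1 \otimes X_2)(b) \otimes Z_1(d_1) \otimes \cdots \otimes Z_k(d_k)$, and similarly for the unit. This follows from exactly the paper's cofinality observation plus preservation in each variable. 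The relevant comma category is $(\ccat \times \ccat)_{/b} \times \prod_i \ccat_{/d_i}$, or $\Map_{\ccat}(\mathbf{1}_{\ccat}, b) \times \prod_i \ccat_{/d_i}$ for the unit, and the slices $\ccat_{/d_i}$ have terminal objects.

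Organized this way, your route never uses colimits over $(\ccat^{\times n})_{/c}$ for $n \geq 3$ as an input. Universal multimorphisms of this strong kind compose, so the $n$-ary tensor is just the iterated binary one, and your single colimit over $(\ccat^{\times 3})_{/c}$ becomes a consequence rather than a step. Conversely, your cofinality computation only handles product-shaped diagrams, so on its own it would not let you invoke Winges' result verbatim. That is why the paper treats arbitrary diagrams.

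A minor point: the multiplication map $(\ccat^{\times 3})_{/c} \to (\ccat \times \ccat)_{/c}$ is not a cocartesian fibration (already for $\ccat = \mathbf{N}$). The cofinality is therefore best phrased via the comma categories of the left Kan extension along it, as the paper does.

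In short, the paper gains brevity and generality by leaning on Winges' operadic theorem, while your route is self-contained and shows that only the binary and nullary data matter.
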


\begin{proof}
In \cite[{Proposition 4.1}]{winges2026hinich}, Winges shows that the result holds under the slightly stronger assumption that $\dcat$ admits and $\otimes_{\dcat}$ preserves colimits indexed by $(\ccat^{\times n})_{/c}$ for each $n \geq 0$ and each $c \in \ccat$. We claim that our two assumptions already imply the stronger one. There are three cases to consider:
\begin{enumerate}
    \item $n = 0$, the indexing $\infty$-category is given by $(\ccat^{\times 0})_{/c} \simeq \Map_{\ccat}(\mathbf{1}_{\ccat}, c)$, so the colimit exists and is preserved by assumption, 
    \item $n = 1$, the indexing $\infty$-category is given by $\ccat_{/c}$, which has a final object given by $\mathrm{id}_{c}$ and hence this colimit exists and is preserved with no assumptions on $\dcat$, 
    \item $n \geq 2$, the relevant colimit can be identified with the composite of left Kan extensions along 
    \[
    (\ccat^{\times n})_{/c} \rightarrow (\ccat^{\times n-1})_{/c} \rightarrow \ldots \rightarrow (\ccat \times \ccat)_{/c} \rightarrow \mathrm{pt},
    \]
where the first $(n-2)$ functors multiply the first two terms. As the last Kan extension exists and is preserved by assumption, it is enough to deal with the others. For each of the functors
\[
(\ccat^{\times k+1})_{/c} \rightarrow (\ccat^{\times k})_{/c}  
\]
for $k \geq 2$, the value of the relevant left Kan extension at $(b_{1}, \ldots, b_{k})$ is given by the colimit over the $\infty$-category
\[
(\ccat \times \ccat)_{/b_{1}} \times \ccat_{/b_{2}} \times \ccat_{/b_{3}} \times \ldots \times \ccat_{/b_{k}}. 
\]
The inclusion of $(\ccat \times \ccat)_{/b_{1}} \times \{ \mathrm{id}_{b_{2}} \} \times \ldots \{ \mathrm{id}_{b_{k}} \} \simeq (\ccat \times \ccat)_{/b_{1}}$ is a cofinal functor, hence this colimit is again preserved by assumption. 
\end{enumerate}
\end{proof}

By passing to opposites to interchange colimits and limits, we obtain the following dual construction:

\begin{variant}
\label{variant:right_day_convolution}
Let $\ccat, \dcat$ be symmetric monoidal $\infty$-categories and suppose that $\dcat$ admits \emph{limits} indexed by $\Map_{\ccat}(c, \mathbf{1}_{\ccat})$ and $(\ccat \times \ccat)_{c / }$ and that such limits are preserved by $\otimes_{\dcat}$ separately in each variable. Then $\Fun(\ccat^{\op}, \dcat^{\op})$ admits a symmetric monoidal structure given by left Day convolution. Under the canonical equivalence of $\infty$-categories 
\[
\Fun(\ccat, \dcat) \simeq \Fun(\ccat^{\op}, \dcat^{\op})^{\op}.
\]
provided by passing to opposites, this induces a symmetric monoidal structure on $\Fun(\ccat, \dcat)$ which we refer to as \emph{right Day convolution}. The underlying tensor product is given by 
\begin{equation}
\label{equation:formula_for_right_day_convolution}
(X \otimes_{\Fun(\ccat, \dcat)}^{R} Y)(c) \simeq \varprojlim_{c \rightarrow c_{1} \otimes_{\ccat} c_{2}} X(c_{1}) \otimes_{\dcat} Y(c_{2}), 
\end{equation}
where the limit is taken over $(c_{1}, c_{2}) \in (\ccat \times \ccat)_{c / }$. 
\end{variant}

We note that while any presentably symmetric monoidal $\infty$-category has a tensor product which preserves all colimits, it is a much more stringent condition for it to preserve limits, so the existence of the right Day convolution symmetric monoidal structure is much less commonplace. In the current work, we are interested in the following positive example: 

\begin{proposition}
\label{proposition:existence_of_right_day_for_fun_n_d_with_d_finitely_complete}
Let $\dcat$ be a finitely complete symmetric monoidal $\infty$-category such that $\otimes_{\dcat}$ preserves finite limits in each variable separately. Then $\Fun(\mathbf{N}, \dcat)$ admits a right Day convolution symmetric monoidal structure. 
\end{proposition}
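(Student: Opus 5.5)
We will deduce the statement from \cref{variant:right_day_convolution} applied with $\ccat = \mathbf{N}$, equipped with the symmetric monoidal structure given by addition, and with $\mathbf{N}$ viewed as a poset. It suffices to check two things. First, that for each $n \in \mathbf{N}$ the indexing $\infty$-categories $\Map_{\mathbf{N}}(n, 0)$ and $(\mathbf{N} \times \mathbf{N})_{n/}$ are, up to cofinality, finite. Second, that the corresponding limits then exist in $\dcat$ and are preserved by $\otimes_{\dcat}$ in each variable, which holds by the hypotheses that $\dcat$ is finitely complete and that $\otimes_{\dcat}$ preserves finite limits separately in each variable.

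For the unit condition, $\Map_{\mathbf{N}}(n, 0)$ is either empty (when $n > 0$) or a point (when $n = 0$). Limits over it are the terminal object or the identity. Both exist in a finitely complete $\infty$-category, and both are finite limits, so they are preserved by $\otimes_{\dcat}$ by assumption.

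For the binary condition, $(\mathbf{N} \times \mathbf{N})_{n/}$ is the poset of pairs $(a, b)$ with $a + b \geq n$. This poset is infinite, so the key step is to replace it by a finite initial subposet; initial functors compute the same limits. The natural candidate is the subposet $P_{n}$ of pairs with $a + b = n$, that is the discrete set $\{(0,n), \ldots, (n,0)\}$. This subposet is not initial, since for instance $(n,n)$ receives maps from several incomparable minimal elements. The correct finite replacement is instead the subposet $Q_{n}$ of pairs $(a, b)$ with $a, b \leq n$ and $a + b \in \{n, n+1\}$. This is a zigzag poset in which each $(a, n-a)$ maps to $(a+1, n-a)$ and to $(a, n-a+1)$.

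To verify that the inclusion $Q_{n} \hookrightarrow (\mathbf{N} \times \mathbf{N})_{n/}$ is initial, I would apply Quillen's Theorem A. For each $(a, b)$ with $a + b \geq n$, one must show that the poset of elements of $Q_{n}$ lying below $(a, b)$ is weakly contractible. Concretely, this poset consists of the points $(i, n-i)$ with $n - b \leq i \leq a$, together with the points $(i, n+1-i)$ between them that still lie below $(a, b)$. It is a connected zigzag path, which is contractible. It is nonempty because $a + b \geq n$.

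The only delicate point is the boundary case: when the interval of $i$ collapses to a single point, one must check that no stray elements of the form $(i, n+1-i)$ appear without their lower neighbours. I expect this bookkeeping to be the main, though minor, obstacle. A possibly cleaner alternative is to first restrict to $a, b \leq n$, which is initial because $(a, b) \mapsto (\min(a, n), \min(b, n))$ provides a right adjoint retraction. After this step, everything is finite, since finite posets have finite nerves, and no further cofinality argument is needed.

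Either route gives finite limits, which completes the verification of the hypotheses of \cref{variant:right_day_convolution}.
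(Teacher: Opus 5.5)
Your proposal is correct and follows the paper's proof. The paper also reduces via \cref{variant:right_day_convolution} to the finite zigzag subposet of pairs with $n \leq a+b \leq n+1$ (it keeps the endpoints $(0,n+1)$ and $(n+1,0)$ that your $Q_{n}$ drops, which makes no difference) and checks initiality with Quillen's Theorem A as you do; the boundary bookkeeping you flag works out because every peak $(i,n+1-i)$ below $(a,b)$ has both its lower neighbours below $(a,b)$ too. Your shortcut is also valid and skips Theorem A entirely: the inclusion of the finite poset $\{(a,b) : a,b \leq n,\ a+b \geq n\}$ is left adjoint to $(a,b)\mapsto(\min(a,n),\min(b,n))$, hence initial.
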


\begin{proof}
By \cref{theorem:existence_of_left_day_convolution} (in \cref{variant:right_day_convolution}) it is enough to verify that for each $n \in \mathbf{N}$, $\dcat$ admits limits indexed by $\Map_{\mathbf{N}}(n, 0)$ and $(\mathbf{N} \times \mathbf{N})_{n / }$, and that such limits are preserved by $\otimes_{\dcat}$. The first $\infty$-category is either terminal or empty, and hence finite, so there is nothing to show. In the second case, we will show that there exists a limit-final functor 
\[
A_{n} \rightarrow (\mathbf{N} \times \mathbf{N})_{n / }
\]
with $A_{n}$ a finite poset. We can identify the target with the poset of pairs $(a, b)$ such that $a+b \geq n$, and we let $A_{n}$ be the subposet of those pairs which in addition satisfy $a+b \leq n+1$, pictured below 
\[
\begin{tikzcd}
	{(0, n+1)} &&& \\
	{(0, n)} & {(1, n)} \\
	& {(1, n-1)} & \ldots \\
	&& {(n, 0)} & {(n+1, 0)}
	\arrow[from=2-1, to=1-1]
	\arrow[from=2-1, to=2-2]
	\arrow[from=3-2, to=2-2]
	\arrow[from=3-2, to=3-3]
	\arrow[from=4-3, to=3-3]
	\arrow[from=4-3, to=4-4]
\end{tikzcd}
\]
Note that the geometric realization of $A_{n}$ is given by a "chain" $\Delta^{1} \cup_{\Delta^{0}} \Delta^{1} \cup_{\Delta^{0}} \ldots \cup_{\Delta^{0}} \Delta^{1}$ of intervals, which is contractible. By Quillen's Theorem A, it is enough to verify that for any $(a_{0}, b_{0})$ with $a_{0} + b_{0} \geq n$, the overcategory $(A_{n})_{/ (a_{0}, b_{0})}$ is weakly contractible. This corresponds to looking at the subcategory of elements below the vertical line given by $a_{0}$ and below the horizontal line given by $b_{0}$. This changes the geometric realization by removing an initial segment and a (disjoint) final segment of the chain, and therefore this overcategory is also weakly contractible as needed.  
\end{proof}

\begin{example}
The assumptions of \cref{proposition:existence_of_right_day_for_fun_n_d_with_d_finitely_complete} are satisfied, for example, when $\dcat$ is a stably symmetric monoidal $\infty$-category, or an abelian category with a tensor product exact in each variable. 
\end{example}

\begin{remark}
\label{remark:for_stable_presentable_d_right_day_on_fun_n_d_is_presentable}
If $\dcat$ is presentably symmetric monoidal stable $\infty$-category, then right Day convolution on $\Fun(\mathbf{N}, \dcat)$ is also presentably symmetric monoidal; that is, the tensor product preserves colimits in each variable separately. To see this, we note that the proof of 
\cref{proposition:existence_of_right_day_for_fun_n_d_with_d_finitely_complete} shows that the limit involved in the formula (\ref{equation:formula_for_right_day_convolution}) can be calculated over a finite $\infty$-category, and in a stable $\infty$-category finite limits commute with arbitrary colimits. 
\end{remark}

If $\ccat$ is a presentably symmetric monoidal stable $\infty$-category, then $\Fun(\mathbf{N}, \ccat)$ is a dualizable $\ccat$-module and so admits a $\ccat$-linear Serre functor $\Fun(\mathbf{N}, \ccat) \rightarrow \Fun(\mathbf{N}, \ccat)$ which we calculated in \cref{remark:serre_functor_of_non_positively_filtered_objects} to be given by the formula 
\[
(\mathrm{Serre}(X))_{q} \simeq \mathrm{cofib}(X_{q-1} \rightarrow X_{\infty}),
\]
where $X_{-1} = 0$ by convention and $X_{\infty} = \varinjlim X$\footnote{Note the change in indexing, as in \cref{remark:serre_functor_of_non_positively_filtered_objects} we worked in presheaves over $\mathbf{Z}_{\leq 0}$, but here we work with covariant functors on $\mathbf{N}$.}. This functor cannot be made symmetric monoidal with respect to either the left or right Day convolution alone. Instead, it intertwines the two in the following sense: 

\begin{proposition}
\label{proposition:serre_functor_on_fun_n_c_interchanges_left_and_right_day_convolution}
The Serre functor $S \colon \Fun(\mathbf{N}, \ccat) \rightarrow \Fun(\mathbf{N}, \ccat)$ is symmetric monoidal with respect to the left Day convolution on the source and the right Day convolution on the target. 
\end{proposition}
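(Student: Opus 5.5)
The plan is to use the universal property of left Day convolution and reduce everything to an explicit computation on the generators $y_{\ccat}(n)$. Since $\ccat$ is presentably symmetric monoidal, the left Day convolution on $\Fun(\mathbf{N}, \ccat) \simeq \presheaves(\mathbf{N}^{\op}; \ccat)$ is the free $\ccat$-linear presentably symmetric monoidal structure on $(\mathbf{N}^{\op}, +)$. Concretely, restriction along the $\ccat$-linear Yoneda embedding $y_{\ccat} \colon \mathbf{N}^{\op} \to \Fun(\mathbf{N}, \ccat)$, $y_{\ccat}(n)_{q} = \mathbf{1}_{\ccat}$ for $q \geq n$ and $0$ otherwise, gives an equivalence. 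On one side are $\ccat$-linear colimit-preserving symmetric monoidal functors out of $(\Fun(\mathbf{N}, \ccat), \otimes^{L})$ into a $\ccat$-linear presentably symmetric monoidal target. On the other are symmetric monoidal functors $\mathbf{N}^{\op} \to$ the target. By \cref{remark:for_stable_presentable_d_right_day_on_fun_n_d_is_presentable}, $(\Fun(\mathbf{N}, \ccat), \otimes^{R})$ is such a target; its $\ccat$-linearity comes from the formula (\ref{equation:formula_for_right_day_convolution}), since tensoring with an object of $\ccat$ commutes with the finite limits involved. Hence it suffices to produce a symmetric monoidal functor $T \colon \mathbf{N}^{\op} \to (\Fun(\mathbf{N}, \ccat), \otimes^{R})$ whose underlying functor is $\mathrm{Serre} \circ y_{\ccat}$. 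The $\ccat$-linear colimit-preserving extension of $T$ then has underlying functor $\mathrm{Serre}$, as both are $\ccat$-linear, cocontinuous and agree on generators.

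The underlying functor is computed from the formula for $\mathrm{Serre}$: $T_{n} := \mathrm{Serre}(y_{\ccat}(n))$ has $(T_{n})_{q} \simeq \mathbf{1}_{\ccat}$ for $q \leq n$ and $0$ for $q > n$, with identity transition maps where possible. Next I compute $T_{a} \otimes^{R} T_{b}$ at level $c$ using the finite zigzag $A_{c}$ from the proof of \cref{proposition:existence_of_right_day_for_fun_n_d_with_d_finitely_complete}. The diagram $(a', b') \mapsto (T_{a})_{a'} \otimes (T_{b})_{b'}$ is $\mathbf{1}$ exactly on the contiguous segment of $A_{c}$ where $a' \leq a$ and $b' \leq b$, and $0$ elsewhere. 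This segment is nonempty iff $c \leq a + b$, in which case the limit is $\mathbf{1}$, since a finite zigzag of identities on a connected segment, with zeros attached outside, has limit $\mathbf{1}$. One has to check the endpoints of the segment are attached through maps out of $0$-terms in the right direction, which the shape of $A_{c}$ guarantees. When the segment is empty the limit is $0$. Similarly the unit is $T_{0}$. So on objects $T_{a} \otimes^{R} T_{b} \simeq T_{a+b}$.

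The main obstacle is promoting these objectwise identifications to coherent symmetric monoidal data, since $\Fun(\mathbf{N}, \ccat)$ is not a poset. My first attempt is to factor $T$ through a poset. Let $\mathcal{P} \subset \Fun(\mathbf{N}, \{0 < 1\})$ be the poset of downward-closed indicator functions, with right Day convolution defined by meets and joins. There $n \mapsto \chi_{[0,n]}$ is visibly a symmetric monoidal map of posets $\mathbf{N}^{\op} \to \mathcal{P}$, as all coherences are automatic. One then wants a symmetric monoidal functor $\mathcal{P} \to (\Fun(\mathbf{N}, \ccat), \otimes^{R})$ sending indicators to the corresponding objects built from $0$ and $\mathbf{1}$. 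I would obtain it by first constructing a lax symmetric monoidal functor, pointwise induced from $\{0 < 1\} \to \ccat$ and the canonical comparison maps into limits, and then checking strongness. Strongness follows from the computation above: it only involves limits of "contiguous" indicator diagrams over $A_{c}$, which this functor preserves even though it does not preserve arbitrary finite limits. If constructing the lax structure this way proves awkward, the fallback is the dual route. One identifies $\otimes^{R}$ with left Day convolution on $\Fun(\mathbf{N}^{\op}, \ccat^{\op})^{\op}$ as in \cref{variant:right_day_convolution}. Then $T$ is the opposite of a Yoneda-type symmetric monoidal functor on $\mathbf{N}$, which is symmetric monoidal by the universal property again, and the remaining work is to match its underlying functor with $\mathrm{Serre} \circ y_{\ccat}$ via the computation above.
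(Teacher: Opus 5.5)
\textbf{There is a gap at the coherence step.} Your reduction to a symmetric monoidal structure on $T=\mathrm{Serre}\circ y_{\ccat}\colon\mathbf{N}^{\op}\to(\Fun(\mathbf{N},\ccat),\otimes^{R})$ is exactly the paper's first step. Your objectwise computation ($T_{a}\otimes^{R}T_{b}\simeq T_{a+b}$, unit $T_{0}$) is also correct. You rightly single out coherence as the real content, but neither of your routes settles it.

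\emph{The fallback route.} You pass to $\Fun(\mathbf{N}^{\op},\ccat^{\op})^{\op}$ and claim that the Yoneda-type functor $n\mapsto\mathbf{1}[\Map_{\mathbf{N}}(-,n)]$ (colimits taken in $\ccat^{\op}$) is symmetric monoidal ``by the universal property again''. But the universal property you used at the start is the one for $\ccat$-linear \emph{presentably} symmetric monoidal $\infty$-categories. The category $\ccat^{\op}$ is not of that kind, since its tensor product preserves only finite colimits. The existence theorem for Day convolution quoted in the paper also says nothing about the Yoneda functor. The missing idea is the paper's duality step:
\begin{enumerate}
\item[(i)] $T$ takes values in levelwise dualizable objects;
\item[(ii)] $\Fun(\mathbf{N},\ccat_{\mathrm{dual}})$ is closed under $\otimes^{R}$;
\item[(iii)] levelwise duality is a symmetric monoidal equivalence $\ccat_{\mathrm{dual}}\simeq\ccat_{\mathrm{dual}}^{\op}$.
\end{enumerate}
Together these give
\[
(\Fun(\mathbf{N},\ccat_{\mathrm{dual}}),\otimes^{R})\simeq(\Fun(\mathbf{N},\ccat_{\mathrm{dual}}^{\op}),\otimes^{R})\simeq(\Fun(\mathbf{N}^{\op},\ccat_{\mathrm{dual}}),\otimes^{L})^{\op}.
\]
Under this equivalence, $T$ becomes the opposite of the ordinary $\ccat$-linear Yoneda embedding $\mathbf{N}\to\presheaves(\mathbf{N};\ccat)$, restricted to a full symmetric monoidal subcategory. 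There the presentable universal property does apply.

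\emph{The poset route.} This can be repaired, but as written it needs reorienting and omits its key input. Downward-closed indicators are not functors $\mathbf{N}\to\{0<1\}$ for that order. To get functors that match the maps $\mathbf{1}\to 0$ in $T_{n}$, you must use a two-element poset $\{u<z\}$ with $u\mapsto\mathbf{1}$ and $z\mapsto 0$, where limits are meets and the tensor product is the join. With that reading one indeed gets $\chi_{a}\otimes^{R}\chi_{b}=\chi_{a+b}$.

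Postcomposition with a symmetric monoidal functor $\psi$ is then \emph{oplax}, not lax, for right Day convolution, because the comparison map is $\psi(\lim)\to\lim\psi$. This is harmless once you check strongness, as you propose.

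The real missing input is $\psi$ itself. The full subcategory of $\ccat$ on $0$ and $\mathbf{1}$ is not a poset, since $\Map(\mathbf{1},\mathbf{1})\simeq\Omega^{\infty}\mathrm{End}(\mathbf{1})$. So a symmetric monoidal functor $(\{u<z\},\vee)\to\ccat$ needs an argument. One way to supply it:
\begin{enumerate}
\item[(a)] $\mathbf{1}\to 0$ is a map of idempotent commutative algebras;
\item[(b)] idempotent commutative algebras form a poset closed under coproducts in $\CAlg(\ccat)$;
\item[(c)] the forgetful functor $\CAlg(\ccat)\to\ccat$ is symmetric monoidal for the cocartesian structure.
\end{enumerate}
With that input, your poset argument is a valid alternative to the paper's that does not use dualizability. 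Without it, or without the duality step above, the proposal does not yet establish the coherence.
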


\begin{proof}
Since $S$ is a morphism of $\ccat$-modules in presentable $\infty$-categories, and the target is presentably symmetric monoidal by \cref{remark:for_stable_presentable_d_right_day_on_fun_n_d_is_presentable}, by the universal property of left Day convolution it is enough to give a symmetric monoidal structure on the composite $f$ in the diagram
\[
\begin{tikzcd}
	{\mathbf{N}^{\op}} & {\Fun(\mathbf{N}, \ccat)} & {\Fun(\mathbf{N}, \ccat)}
	\arrow["{y_{\ccat}}", from=1-1, to=1-2]
	\arrow["f"', bend right=30, from=1-1, to=1-3]
	\arrow["S", from=1-2, to=1-3]
\end{tikzcd},
\]
where $y_{\ccat}$ is the $\ccat$-linear Yoneda embedding. Using (\ref{equation:formula_for_serre_of_v_presheaves}), we see that this composite is given by 
\[
f(n)_{m} \simeq \underline{\map}_{\Fun(\mathbf{N},\ccat)}(y_{\ccat}(m), S(y_{\ccat}(n))) \simeq (\mathbf{1}_{\ccat}[\Map_{\mathbf{N}^{\op}}(n, m)])^{\vee}
\]
where $\mathbf{1}_{\ccat}[\Map_{\mathbf{N}^{\op}}(n, m)]$ is the colimit of the constant diagram on $\mathbf{1}_{\ccat}$ indexed by the space $\Map_{\mathbf{N}^{\op}}(n, m)$. As the latter is either empty or contractible, the image of $f$ is contained in the monoidal subcategory $\Fun(\mathbf{N}, \ccat_{\mathrm{dual}})$ of diagrams which are levelwise dualizable. 

We have a chain of symmetric monoidal equivalences 
\[
\Fun(\mathbf{N}, \ccat_{\mathrm{dual}})  \simeq \Fun(\mathbf{N}, \ccat_{\mathrm{dual}}^\op) \simeq \Fun(\mathbf{N}^{\op}, \ccat_{\mathrm{dual}})^{\op}, 
\]
where we view the left and middle $\infty$-categories as monoidal using right Day convolution, and the right one using the left Day convolution. Here, the first equivalence is given by passing to duals levelwise, and the second one is given by passing to opposite $\infty$-categories. Under these equivalences, $f$ corresponds to the opposite of the functor $g \colon \mathbf{N} \rightarrow \Fun(\mathbf{N}^{\op}, \ccat_{\mathrm{dual}})$ given by $g(n)_{m} \simeq \mathbf{1}_{\ccat}[\Map_{\mathbf{N}}(n, m)]$. This is the $\ccat$-linear Yoneda embedding, and hence is symmetric monoidal by the universal property of left Day convolution. 
\end{proof}

\section{Monoidality of the Tate filtration} 
\label{section:tate_filtration}

In this section we prove the $\mathbf{E}_{2}$-monoidality of the cellular Tate filtration. 

\begin{notation}
\label{notation:equivariant_spheres_giving_the_tate_filtration}
Let $\mathbf{C}$ be the topological space of complex numbers together with its canonical $U(1) \simeq S^{1}$-action. We write 
\[
\thesphere^{\mathbf{C}} \colonequals \Sigma^{\infty} \mathrm{N}(\mathbf{C} \cup \{ \infty \})
\]
for the suspension spectrum of (the singular nerve of) the one-point compactification of $\mathbf{C}$. This is a spectrum with an $S^{1}$-action which is non-equivariantly equivalent to the 2-sphere $\Sigma^{2} \thesphere$; in particular, it is invertible under the tensor product. For any $q \in \mathbf{Z}$, we write 
\[
\thesphere^{(q) \mathbf{C}} \colonequals (\thesphere^{\mathbf{C}})^{\otimes q}. 
\]
The element $0 \in \mathbf{C}$ is a fixed point under the $S^{1}$-action and hence it induces a map
\[
\alpha_{0} \colon \thesphere \rightarrow \thesphere^{\mathbf{C}},
\]
where we implicitly use the equivalence $\Sigma^{\infty} \mathrm{N}(\{ 0 \} \cup \{ \infty \}) \simeq \Sigma^{\infty}_{+} \mathrm{N}(\{ 0 \}) \simeq \thesphere$. Tensoring $\alpha_{0}$ with $\thesphere^{(q)\mathbf{C}}$ for varying $q$ yields a filtered object 
\[
\ldots \rightarrow \thesphere^{(q-1)\mathbf{C}} \rightarrow \thesphere^{(q)\mathbf{C}} \rightarrow \thesphere^{(q+1)\mathbf{C}} \rightarrow \ldots. 
\] 
which we will denote by $S^{\ast} \colonequals \thesphere^{-(\ast)\mathbf{C}} \in \Fil(\spectra^{h \mathrm{S^{1}}})$. 
\end{notation}

\begin{definition}
\label{definition:cellular_tate_filtration}
Let $X$ be a spectrum with an $S^{1}$-action. The associated \emph{cellular Tate filtration} is the filtered spectrum given by the formula
\[
q \in \ZZ \mapsto F^{q}_{T}(X^{tS^{1}}) \colonequals (X \otimes S^{q})^{hS^{1}} \simeq (X \otimes \thesphere^{-(q)\mathbf{C}})^{hS^{1}},  
\]
the homotopy fixed points of the tensor product of $X$ and the filtered spectrum of \cref{notation:equivariant_spheres_giving_the_tate_filtration}. 
\end{definition}

\begin{recollection}
\label{recollection:properties_of_the_bt_tate_filtration}
The basic properties of the cellular Tate filtration are as follows: 

\begin{enumerate}
    \item The colimit can be identified
    \[
    \varinjlim F^{\ast}_{T}(X^{tS^{1}}) \simeq X^{tS^{1}} \colonequals \mathrm{cofib}(X_{hS^{1}}[1] \rightarrow X^{hS^{1}})
    \]
    with the Tate construction \cite[{Proposition B.6}]{BalsPeriodicCyclicHomology}. Moreover, by \cite[{Theorem I.4.1}]{nikolaus-scholze} there is a unique such equivalence which commutes with the canonical maps from $F^{0}_{T}(X^{tS^{1}}) \simeq X^{hS^{1}}$. 
    \item The associated graded can be identified 
    \[
    \gr^{q}_{T}(X^{tS^{1}}) \simeq X[-2q]
    \]
    with the collection of even (de)suspensions of $X$. 
\end{enumerate}
\end{recollection}

\begin{theorem}
\label{theorem:lax_monoidal_structure_on_the_tate_filtration}
The Tate filtration of \cref{definition:cellular_tate_filtration} can be refined to a lax $\mathbf{E}_{2}$-monoidal functor 
\[
F_{T}^{\ast} \colon \spectra^{hS^{1}} \rightarrow \Fil \spectra.  
\]
\end{theorem}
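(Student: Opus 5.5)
The plan follows the outline of \cref{subsection:introduction_sketch_of_the_monoidality_argument}. Since $F_{T}^{\ast}(X^{tS^{1}}) = (X \otimes S^{\ast})^{hS^{1}}$, the functor $F_{T}^{\ast}$ factors as
\[
\spectra^{hS^{1}} \xrightarrow{\;X \mapsto X \otimes S^{\ast}\;} \Fil(\spectra^{hS^{1}}) \xrightarrow{\;(-)^{hS^{1}}\;} \Fil(\spectra),
\]
where we view the target categories with Day convolution. The second functor is levelwise homotopy fixed points. It is lax symmetric monoidal, being right adjoint to the symmetric monoidal functor that gives a filtered spectrum the trivial action. The first functor is lax $\mathbf{E}_{2}$-monoidal as soon as $S^{\ast}$ is an $\mathbf{E}_{2}$-algebra in $\Fil(\spectra^{hS^{1}})$, since $X \mapsto X \otimes S^{\ast}$ is then the composite of the symmetric monoidal constant-filtration functor with tensoring by an $\mathbf{E}_{2}$-algebra. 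So the whole task is to construct an $\mathbf{E}_{2}$-algebra whose underlying filtered $S^{1}$-spectrum is $S^{\ast}$, with its structure maps given by $\alpha_{0}$.

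\textbf{Step 1.} Take Lurie's $\mathbf{E}_{2}$-coalgebra $n \mapsto \mathbf{CP}^{n}$ in $\Fun(\mathbf{N}, \spaces)$ under Day convolution (cartesian in spaces). Lift it to an $\mathbf{E}_{2}$-coalgebra in $\Fun(\mathbf{N}, \spaces_{/\mathbf{CP}^{\infty}})$. Every object of $\spaces$ is a coalgebra for the cartesian structure, so this should follow from a general lemma: filtered coalgebras lift uniquely to coalgebras over their colimit. I would prove that lemma by identifying $\spaces_{/B} \simeq \Fun(B, \spaces)$ and noting that the map to the colimit is itself coalgebraic.

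Under $\spaces_{/\mathrm{B}S^{1}} \simeq \spaces^{hS^{1}}$ we take fibres. Then we apply $\Sigma^{\infty}_{+}$, which is symmetric monoidal for Day convolution because it preserves colimits and is symmetric monoidal levelwise. This produces an $\mathbf{E}_{2}$-coalgebra $C$ in $\Fun(\mathbf{N}, \spectra^{hS^{1}})$ with $C_{n} \simeq \Sigma^{\infty}_{+} S(\mathbf{C}^{n+1})$.

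\textbf{Step 2.} Apply \cref{proposition:serre_functor_on_fun_n_c_interchanges_left_and_right_day_convolution} with $\ccat = \spectra^{hS^{1}}$. This makes $\mathrm{Serre}(C)$ an $\mathbf{E}_{2}$-coalgebra for right Day convolution. Its terms are compact, hence levelwise dualizable. Passing to levelwise duals, via the equivalence $\Fun(\mathbf{N}, \ccat_{\mathrm{dual}})^{R} \simeq \Fun(\mathbf{N}^{\op}, \ccat_{\mathrm{dual}})^{L,\op}$ from the same proof, yields an $\mathbf{E}_{2}$-algebra in $\Fun(\mathbf{N}^{\op}, \spectra^{hS^{1}})$ with left Day convolution.

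Now compute the terms. Since $C_{\infty} \simeq \Sigma^{\infty}_{+} S(\mathbf{C}^{\infty}) \simeq \thesphere$, we get
\[
\mathrm{Serre}(C)_{n} \simeq \mathrm{cofib}(\Sigma^{\infty}_{+} S(\mathbf{C}^{n}) \to \thesphere) \simeq \Sigma\, \Sigma^{\infty} S(\mathbf{C}^{n})_{+} \simeq \thesphere^{(n)\mathbf{C}},
\]
using the cofibre sequence $S(V)_{+} \to S^{0} \to S^{V}$. Its dual is $\thesphere^{-(n)\mathbf{C}}$.

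\textbf{Step 3.} Left Kan extend along $\mathbf{N}^{\op} \hookrightarrow \mathbf{Z}^{\op}$. This functor is symmetric monoidal, so left Kan extension is symmetric monoidal for Day convolution and preserves $\mathbf{E}_{2}$-algebras. The result is a nonnegatively filtered algebra, constant in negative degrees. Next, let $\tau$ be the class in filtration degree $-1$ given by $\alpha_{0}$, i.e. the map from the unit shifted to filtration $-1$. Inverting $\tau$ by the colimit of multiplications by $\tau$ gives an $\mathbf{E}_{2}$-algebra. Its underlying object is $S^{\ast}$, with transition maps induced by $\alpha_{0}$.

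To make this precise, I would rather regard the Kan-extended algebra as an $\mathbf{E}_{2}$-algebra over $\mathbf{1}[\tau]$. Then $\tau$ is central, since the unit is in the centre, and localization at a central element of an $\mathbf{E}_{2}$-algebra is again $\mathbf{E}_{2}$, for instance as a smashing localization of $\mathbf{1}[\tau]$-modules.

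\textbf{Main obstacle.} I expect the main difficulty to be the identification in Steps 2--3. One must check that the structure maps of the dual of $\mathrm{Serre}(C)$ are exactly the maps induced by $\alpha_{0}$, and that the unit is compatible. I would check this on underlying graded objects, using the Yoneda description $f(n)_{m}$ from the proof of \cref{proposition:serre_functor_on_fun_n_c_interchanges_left_and_right_day_convolution}. The transition maps come from the inclusions $S(\mathbf{C}^{n}) \subset S(\mathbf{C}^{n+1})$, whose cofibres are the $\alpha_{0}$-type maps $\thesphere^{(n)\mathbf{C}} \to \thesphere^{(n+1)\mathbf{C}}$. Only the underlying filtered object needs identifying, because the $\mathbf{E}_{2}$-structure is then transported.
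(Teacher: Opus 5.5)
Your reduction and Steps 1 and 2 follow the paper's argument: Lurie's coalgebra, the lift over $\mathbf{CP}^{\infty}$, suspension spectra, the Serre functor, and levelwise duals. The gap is in Step 3, where you invert the wrong element.

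\textbf{The element you invert.} The map from the unit shifted to filtration $-1$ is the canonical filtration parameter $\tau\colon(-1)_!(\thesphere)\to R$. This is also the only reading under which your centrality argument via $\mathbf{1}[\tau]$ makes sense. Inverting $\tau$ gives the telescope $R\to R\otimes(1)_!(\thesphere)\to R\otimes(2)_!(\thesphere)\to\cdots$. Its degree-$q$ term is $R^{q-k}$, which equals $\thesphere$ as soon as $k\geq q$. So $R[\tau^{-1}]$ is the constant filtered object on $\varinjlim R\simeq\thesphere$, not $S^{\ast}$. Once $\tau$ is inverted every transition map is an equivalence. The transition maps of $S^{\ast}$ are not: on underlying spectra they are maps $\Sigma^{-2q-2}\thesphere\to\Sigma^{-2q}\thesphere$.

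\textbf{The correct element.} You must invert an element in filtration degree $+1$ carrying a twist. This is $\alpha\colon(1)_!(\thesphere^{(-1)\mathbf{C}})\to R$, given by the identity of $R^{1}=\thesphere^{(-1)\mathbf{C}}$. Its telescope is $R\to R\otimes(-1)_!(\thesphere^{(1)\mathbf{C}})\to R\otimes(-2)_!(\thesphere^{(2)\mathbf{C}})\to\cdots$. The degree-$q$ term is $R^{q+k}\otimes\thesphere^{(k)\mathbf{C}}\simeq\thesphere^{-(q)\mathbf{C}}$ once $q+k\geq 0$, which recovers $S^{\ast}$.

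Because $\alpha$ does not come from the unit, its centrality is no longer automatic. You need the $\mathbf{E}_{2}$-structure of $R$: homotopy commutativity gives the Ore condition. Together with the localization methods of \cite[{\S 7.2.3}]{HA}, this shows that $R[\alpha^{-1}]$ exists as an $\mathbf{E}_{2}$-algebra computed by this telescope. This is exactly how the paper concludes.

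\textbf{Two smaller slips.}
\begin{enumerate}
\item In the reduction, the functor you want is $X\mapsto(0)_!(X)$, i.e.\ tensoring $X$ with the Day convolution unit, not the constant filtration. The constant filtration is only lax monoidal. Moreover, its Day convolution with $S^{\ast}$ is the constant object on $X\otimes\varinjlim S^{\ast}\simeq X\otimes\thesphere^{(\infty)\mathbf{C}}\simeq 0$.
\item In Step 2, the terms $\thesphere^{(n)\mathbf{C}}$ are not compact in $\spectra^{hS^{1}}$. Already the unit is not compact, since $(-)^{hS^{1}}$ does not commute with filtered colimits. The terms are dualizable simply because they are invertible.
\end{enumerate}
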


We record that the above result is optimal in the following sense: 
\begin{warning}
\label{warning:tate_filtration_cannot_be_made_e3_monoidal}
The cellular Tate filtration cannot be promoted to an $\mathbf{E}_{3}$-monoidal functor, cf. \cite[{Warning 6.1.8}]{bhatt-lurie:apc}. To see this, take $X = \mathbf{F}_{2}$, the field with two elements. In this case, the non-negative part of the Tate filtration 
\[
\ldots \rightarrow (\mathbf{F}_{2} \otimes \thesphere^{(-2)\mathbf{C}})^{hS^{1}} \rightarrow (\mathbf{F}_{2} \otimes \thesphere^{-\mathbf{C}})^{hS^{1}}  \rightarrow (\mathbf{F}_{2} \otimes \thesphere)^{hS^{1}} 
\]
provides a filtration on $\mathbf{F}_{2}^{hS^{1}}$ which by a direct calculation is of the form 
\[
\ldots \rightarrow \tau_{\leq -4}(\mathbf{F}_{2}^{hS^{1}}) \rightarrow \tau_{\leq -2}(\mathbf{F}_{2}^{hS^{1}}) \rightarrow \mathbf{F}_{2}^{hS^{1}} 
\]
If we apply the doubling functor of \cref{recollection:evenification_functor_on_filtered_spectra} we obtain a filtered $\mathbf{F}_{2}$-module 
\[
\ldots \rightarrow \tau_{\leq -4}(\mathbf{F}_{2}^{hS^{1}}) \rightarrow \tau_{\leq -3}(\mathbf{F}_{2}^{hS^{1}}) \rightarrow \tau_{\leq -2}(\mathbf{F}_{2}^{hS^{1}}) \rightarrow \tau_{\leq -1}(\mathbf{F}_{2}^{hS^{1}}) \rightarrow \mathbf{F}_{2}^{hS^{1}},
\]
where we use that $\tau_{\leq -2n}(\mathbf{F}_{2}^{hS^{1}}) \simeq \tau_{\leq -2n+1}(\mathbf{F}_{2}^{hS^{1}})$ for all $n \geq 0$. This is in the heart of the Beilinson t-structure, and hence can be identified with a cochain complex of $\mathbf{F}_{2}$-vector spaces; in fact, the cellular cochain complex $\mathrm{C}^{\ast}(\mathbf{CP}^{\infty}, \mathbf{F}_{2})$ associated  to the standard cell structure on $\mathrm{B}S^{1} \simeq \mathbf{CP}^{\infty}$. In other words, the cellular Tate filtration provides an identification 
\begin{equation}
\label{equation:equivalence_between_f2hs1_and_a_realization_the_cellular_cochain_complex}
\mathbf{F}_{2}^{hS^{1}} \simeq | \mathrm{C}^{\ast}(\mathbf{CP}^{\infty}, \mathbf{F}_{2}) | \simeq | \Hrm^{\ast}(\mathbf{CP}^{\infty}, \mathbf{F}_{2}) |,
\end{equation}
where $| - |$ denotes the realization of a cochain complex in the sense of \cite[{Construction 3.28}]{antieau:decalage}; that is, the colimit of the associated filtered object. Here, the second equivalence follows from the observation that this cochain complex has zero differential, as $\mathbf{CP}^{\infty}$ has only even cells. 

If it were possible to promote the cellular Tate filtration to a lax $\mathbf{E}_{3}$-monoidal functor, then (\ref{equation:equivalence_between_f2hs1_and_a_realization_the_cellular_cochain_complex}) would be an equivalence of $\mathbf{E}_{3}$-algebras. In other words, $\mathbf{F}_{2}^{hS^{1}}$ would be formal as an $\mathbf{E}_{3}$-$\mathbf{F}_{2}$-algebra. This in turn would imply that the Dyer-Lashof operation $Q_{2}$ satisfies 
\[
Q_{2}(t) = 0, 
\]
where $t \in \pi_{-2}(\mathbf{F}_{2}^{hS^{1}})$ is the polynomial generator of $\pi_{-\ast}(\mathbf{F}_{2}^{hS^{1}}) \simeq \mathrm{H}^{\ast}(\mathbf{CP}^{\infty}, \mathbf{F}_{2}) \simeq \mathbf{F}_{2} [\![t]\!]$. However, on elements of cohomological degree two, $Q_{2}$ can be identified with $Q^{0}$, which acts by identity on $\mathbf{F}_{2}$-cochains of any space, see \cite[{Example 5.9}]{lawson2020n}, yielding a contradiction. Thus, the cellular Tate filtration cannot be made $\mathbf{E}_{3}$-monoidal. 

Note that this argument does not obstruct the Tate filtration from being lax $\mathbf{E}_{2}$-monoidal, as $\mathbf{F}_{2}^{hS^{1}}$ \emph{is} formal as an $\mathbf{E}_{2}$-$\mathbf{F}_{2}$-algebra; see  \cite[Lemma 2.1.10]{devalapurkar2024ku} or \cite[{Theorem 3.5}]{horel2025e_2}. 
\end{warning}

Note that, by construction, the cellular Tate filtration is a composite of two functors: 
\begin{enumerate}
    \item tensoring with the filtered spectrum with $S^{1}$-action $S^{\ast} \simeq \thesphere^{-(\ast)\mathbf{C}}$, 
    \item taking homotopy fixed points. 
\end{enumerate}

The latter functor is lax symmetric monoidal, so to prove \cref{theorem:lax_monoidal_structure_on_the_tate_filtration} it is enough to make the former lax $\mathbf{E}_{2}$-monoidal. This is equivalent to the following: 

\begin{theorem}
\label{theorem:suspension_spectra_of_complex_numbers_admits_a_filtered_e2_algebra_structure}
The filtered spectrum $S^{\ast}$ of \cref{notation:equivariant_spheres_giving_the_tate_filtration} admits an $\mathbf{E}_{2}$-algebra structure compatible with the $S^{1}$-action. 
\end{theorem}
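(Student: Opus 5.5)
We follow the strategy of \cref{subsection:introduction_sketch_of_the_monoidality_argument}: we first produce an $\mathbf{E}_{2}$-coalgebra in filtered spaces, then pass to spectra, then dualize.

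\emph{Step 1: the coalgebra in filtered spaces.} Lurie's construction makes $n \mapsto \mathbf{CP}^{n}$ into an $\mathbf{E}_{2}$-coalgebra in $\Fun(\mathbf{N}, \spaces)$ with Day convolution. Every space is a cocommutative coalgebra via the diagonal, so the content is the $\mathbf{E}_{2}$-structure on the maps $\mathbf{CP}^{a+b} \rightarrow \mathbf{CP}^{a} \times \mathbf{CP}^{b}$. These do not exist on the nose; Lurie obtains them by viewing $\mathbf{CP}^{n}$ as classifying lines in $\mathbf{C}^{n+1}$ and using a two-dimensional family of splittings of $\mathbf{C}^{\infty}$. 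We next lift this coalgebra to $\Fun(\mathbf{N}, \spaces_{/\mathbf{CP}^{\infty}})$. Here we use that $\spaces_{/B} \simeq \Fun(B, \spaces)$, so that the product over $B$ corresponds to the pointwise product, and that a coalgebra filtered by $\mathbf{N}$ receives a canonical coalgebra map to its colimit. The lift is uniquely determined because the terminal object $\mathbf{CP}^{\infty}$ of this category of coalgebras-over-colimit is forced.

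Unstraightening over $\mathrm{B}S^{1} \simeq \mathbf{CP}^{\infty}$ gives an $\mathbf{E}_{2}$-coalgebra in $\Fun(\mathbf{N}, \spaces^{\mathrm{B}S^{1}})$ given by $n \mapsto \mathrm{fib}(\mathbf{CP}^{n} \rightarrow \mathbf{CP}^{\infty}) \simeq S(\mathbf{C}^{n+1})$. Applying the symmetric monoidal functor $\Sigma^{\infty}_{+}$ levelwise, which is compatible with Day convolution, yields an $\mathbf{E}_{2}$-coalgebra $C_{\bullet} = \Sigma^{\infty}_{+} S(\mathbf{C}^{\bullet+1})$ in $\Fun(\mathbf{N}, \spectra^{hS^{1}})$ with left Day convolution.

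\emph{Step 2: turn it into a filtered algebra.} By \cref{proposition:serre_functor_on_fun_n_c_interchanges_left_and_right_day_convolution}, with $\ccat = \spectra^{hS^{1}}$, the object $\mathrm{Serre}(C_{\bullet})$ is an $\mathbf{E}_{2}$-coalgebra for right Day convolution. The formula of \cref{remark:serre_functor_of_non_positively_filtered_objects} gives
\[
\mathrm{Serre}(C)_{q} \simeq \mathrm{cofib}(\Sigma^{\infty}_{+} S(\mathbf{C}^{q}) \rightarrow \Sigma^{\infty}_{+} S(\mathbf{C}^{\infty})) \simeq \Sigma^{\infty}\, S(\mathbf{C}^{q})^{\diamond},
\]
where $S(\mathbf{C}^{\infty})$ is contractible and $(-)^{\diamond}$ denotes the unreduced suspension. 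Since $S(\mathbf{C}^{q})^{\diamond} \simeq \thesphere^{(q)\mathbf{C}}$ equivariantly (the suspension of the unit sphere is the one-point compactification), each term is dualizable. Levelwise duality then carries right-Day coalgebras in $\Fun(\mathbf{N}, \ccat_{\mathrm{dual}})$ to left-Day algebras in $\Fun(\mathbf{N}^{\op}, \ccat_{\mathrm{dual}})$, as in the proof of \cref{proposition:serre_functor_on_fun_n_c_interchanges_left_and_right_day_convolution}. We thus obtain an $\mathbf{E}_{2}$-algebra $q \mapsto \thesphere^{-(q)\mathbf{C}}$ for $q \geq 0$, whose transition maps are duals of the maps induced by $S(\mathbf{C}^{q}) \subset S(\mathbf{C}^{q+1})$, hence are $\alpha_{0}$-twisted. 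The filtered unit is $\thesphere$ in degree $0$, and the multiplications are equivalences $\thesphere^{-(a)\mathbf{C}} \otimes \thesphere^{-(b)\mathbf{C}} \simeq \thesphere^{-(a+b)\mathbf{C}}$. This is checked on underlying spectra: the coalgebra maps $\mathbf{CP}^{a+b} \rightarrow \mathbf{CP}^{a} \times \mathbf{CP}^{b}$ induce Thom-type identifications on the cofibres.

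\emph{Step 3: extend to $\mathbf{Z}$.} Left Kan extend along $\mathbf{N}^{\op} \subset \mathbf{Z}^{\op}$; this is symmetric monoidal for Day convolution, so it produces an $\mathbf{E}_{2}$-algebra $S^{\geq 0}$ in $\Fil(\spectra^{hS^{1}})$. Let $u$ be the class in filtration degree $1$ given by $\thesphere^{-\mathbf{C}}$ shifted by one filtration step, and invert it. Since $u$ acts by an equivalence on every term, the localization $S^{\geq 0}[u^{-1}]$ is again $\mathbf{E}_{2}$. Localizing at a single homotopy class of an $\mathbf{E}_{2}$-algebra is legitimate because the class is central up to homotopy in an $\mathbf{E}_{2}$-algebra, so the Ore condition holds. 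Its underlying filtered object is $q \mapsto \thesphere^{-(q)\mathbf{C}}$ for all $q \in \mathbf{Z}$, with transition maps $\alpha_{0}$, that is, $S^{\ast}$.

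The main obstacle is Step 2's identification: checking that the dualized Serre image agrees with $S^{\ast}$ as a filtered $S^{1}$-spectrum, including its transition maps. This amounts to identifying the duals of the inclusions of spheres with $\alpha_{0}$ by equivariant Atiyah duality for $S(\mathbf{C}^{q})$. A further point to check is that the Step 3 localization does not change the underlying object in nonnegative degrees.
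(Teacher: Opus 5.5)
Your proposal follows the paper's proof essentially step for step: Lurie's $\mathbf{E}_{2}$-coalgebra $\{\mathbf{CP}^{n}\}$, its lift over $\mathbf{CP}^{\infty}$ via the coalgebra map to the constant diagram, $\Sigma^{\infty}_{+}$ of the fibres $S(\mathbf{C}^{n+1})$, the Serre functor exchanging left and right Day convolution, levelwise duals, left Kan extension along $\mathbf{N}^{\op}\subset\mathbf{Z}^{\op}$, and Ore localization at the degree-one class. Your explicit check that the products $\thesphere^{-(a)\mathbf{C}}\otimes\thesphere^{-(b)\mathbf{C}}\to\thesphere^{-(a+b)\mathbf{C}}$ are equivalences is exactly what the paper's ``after unwrapping'' leaves implicit.

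Two inaccuracies should be fixed. First, a Day-convolution comultiplication is a map $\mathbf{CP}^{n}\to\varinjlim_{a+b\le n}\mathbf{CP}^{a}\times\mathbf{CP}^{b}$, not a family of maps $\mathbf{CP}^{a+b}\to\mathbf{CP}^{a}\times\mathbf{CP}^{b}$. Such maps cannot exist compatibly with the diagonal; for example, no map $\mathbf{CP}^{2}\to\mathbf{CP}^{1}\times\mathbf{CP}^{1}$ refines it, since $x^{2}\neq 0$ in $\mathrm{H}^{4}(\mathbf{CP}^{2})$. Second, $u$ does not act invertibly in negative filtration degrees, where it acts by $\alpha_{0}$. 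The localization is therefore justified by the invertibility of $(1)_{!}(\thesphere^{-\mathbf{C}})$ together with the Ore condition, not by $u$ already acting by equivalences.
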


We will prove the above result using an argument of Lurie, following the outline given in \S\ref{subsection:introduction_sketch_of_the_monoidality_argument}. The starting point is an $\mathbf{E}_{2}$-coalgebra structure on the filtered space $\{ \mathbf{CP}^{n} \}$ constructed in \cite[{Theorem 5.2.3}]{lurie2015rotation}. 

Each of the spaces $\mathbf{CP}^{n}$ maps into the colimit $\mathbf{CP}^{\infty} \simeq \mathrm{B}S^{1}$ and hence presents a space with an $S^{1}$-action. It will be important for us to keep track of this, hence we first need to verify that the coalgebra structure is compatible with the map into $\mathbf{CP}^{\infty}$. This is a consequence of the following general result: 

\begin{lemma}
\label{lemma:coalgebras_in_filtered_spaces_lift_to_coalgebras_in_spaces_over_their_colimit} 
Let $X = \{ X_{n} \}$ be a non-negatively, increasingly filtered space equipped with an $\mathbf{E}_{k}$-coalgebra structure with respect to (left) Day convolution on $\Fun(\mathbf{N}, \spaces)$. Then $\{ X_{n} \}$ can be lifted to an $\mathbf{E}_{k}$-coalgebra object of $\Fun(\mathbf{N}, \spaces_{/X_{\infty}})$, where $X_{\infty} \colonequals \varinjlim X_{n}$ is the colimit and we view $\spaces_{/X_{\infty}}$ as a symmetric monoidal $\infty$-category using the cartesian symmetric monoidal structure. 
\end{lemma}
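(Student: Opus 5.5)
The plan is to use the colimit functor $\varinjlim \colon \Fun(\mathbf{N}, \spaces) \rightarrow \spaces$ and its right adjoint $c \colon \spaces \rightarrow \Fun(\mathbf{N}, \spaces)$, the constant-diagram functor. First I will check that $\varinjlim$ is symmetric monoidal for left Day convolution. It is the left Kan extension along the symmetric monoidal functor $\mathbf{N} \rightarrow \mathrm{pt}$, so this should follow from the universal property of Day convolution. I will also check that $c$ is strongly symmetric monoidal, not merely lax. Concretely, $(cA \otimes^{L} cB)_{n} \simeq \varinjlim_{a+b \leq n} A \times B \simeq A \times B$, because the indexing poset $\{(a,b) : a+b \leq n\}$ has a terminal-ish cofinal part and is weakly contractible. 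The unit is $c(\mathrm{pt})$. Consequently $\varinjlim \dashv c$ is a symmetric monoidal adjunction, and its unit $X \rightarrow c(X_{\infty})$ is a symmetric monoidal natural transformation.

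Next I apply this to the given $\mathbf{E}_{k}$-coalgebra $X$. The object $X_{\infty} = \varinjlim X$ inherits an $\mathbf{E}_{k}$-coalgebra structure in $(\spaces, \times)$. Since coalgebras in a cartesian symmetric monoidal $\infty$-category are canonically the diagonal coalgebras ($\mathrm{CoAlg}_{\mathbf{E}_{k}}(\spaces^{\times}) \simeq \spaces$), this is just the diagonal structure. Then $c(X_{\infty})$ is a cocommutative coalgebra, and $X \rightarrow c(X_{\infty})$ is a map of $\mathbf{E}_{k}$-coalgebras. So $X$ naturally becomes an object of $\mathrm{CoAlg}_{\mathbf{E}_{k}}(\Fun(\mathbf{N},\spaces))_{/c(X_{\infty})}$.

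It then remains to identify this slice with $\mathbf{E}_{k}$-coalgebras in $\Fun(\mathbf{N}, \spaces_{/X_{\infty}})$. Here $\spaces_{/X_{\infty}}$ carries the fibre product, and the functor category carries Day convolution. On underlying categories we have $\Fun(\mathbf{N}, \spaces_{/X_{\infty}}) \simeq \Fun(\mathbf{N}, \spaces)_{/c(X_{\infty})}$. Under this identification, the Day convolution of the fibre product should be
\[
Y \boxtimes Z \colonequals (Y \otimes^{L} Z) \times_{c(X_{\infty}) \otimes^{L} c(X_{\infty})} c(X_{\infty}),
\]
with the pullback taken along the diagonal. Levelwise this equals $\varinjlim_{a+b \leq n} Y_{a} \times_{X_{\infty}} Z_{b}$, because colimits in $\spaces$ are universal. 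I will phrase this as a general statement. Let $\dcat$ be symmetric monoidal with pullbacks, and let $C$ be a cocommutative coalgebra such that pullback along $C \rightarrow C \otimes C$ interacts well with $\otimes$. Then $\dcat_{/C}$ carries the structure $Y \boxtimes Z$ above, and $\mathrm{CoAlg}_{\mathbf{E}_{k}}(\dcat)_{/C} \simeq \mathrm{CoAlg}_{\mathbf{E}_{k}}(\dcat_{/C}^{\boxtimes})$. A coalgebra map $Y \rightarrow Y \otimes Y$ over $C$ factors uniquely through the pullback along the diagonal.

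The main obstacle is making this last identification honest at the level of $\infty$-operads, rather than just on underlying tensor products. My first attempt is to pass to opposite categories, so that coalgebras become algebras. The slice over $C$ then becomes a coslice under a commutative algebra, where the analogous statement for algebras (modules/algebras under a commutative algebra) is standard. One must check that the relative tensor product there matches $\boxtimes$, and this is exactly the universality-of-colimits computation above. A fallback is to write $\spaces_{/X_{\infty}} \simeq \Fun(X_{\infty}, \spaces)$ with the pointwise product. I would then compare both sides as Day convolutions on $\Fun(\mathbf{N} \times X_{\infty}, \spaces)$ via Winges' form of \cref{theorem:existence_of_left_day_convolution}, and apply the first two steps fibrewise.
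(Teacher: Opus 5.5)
Your proposal is correct and is essentially the paper's own proof. The unit $X \to \const(X_\infty)$ of $\varinjlim \dashv \const$ is a map of $\mathbf{E}_k$-coalgebras into the unique (diagonal, cocommutative) coalgebra structure. The paper then passes to opposites, views $\const(X_\infty)$ as a commutative algebra, and identifies the operad structure on its undercategory with Day convolution on $\Fun(\mathbf{N}, \spaces_{/X_\infty})^{\op}$ by exactly your pullback-along-the-diagonal and universality-of-colimits computation, concluding via \cite[{Corollary 3.4.1.7}]{HA} that $\mathbf{E}_k$-algebras there are $\mathbf{E}_k$-algebra maps out of $\const(X_\infty)$.

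One small slip: the poset $\{(a,b) : a+b \leq n\}$ is weakly contractible because it has the initial object $(0,0)$, not because of a terminal cofinal part, since it has $n+1$ maximal elements. Your fallback via $\Fun(\mathbf{N} \times X_\infty, \spaces)$ is not needed.
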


\begin{proof}
The colimit functor admits a right adjoint $\const \colon \spaces \rightarrow \Fun(\mathbf{N}, \spaces)$ which is thus canonically oplax monoidal, and in this case in fact strongly monoidal. It follows that the unit map of this adjunction induces a morphism $X \rightarrow \const(X_{\infty})$ of $\mathbf{E}_{k}$-coalgebras. The target is in the image of a symmetric monoidal functor out of a cartesian symmetric monoidal $\infty$-category $\spaces$, and thus the given $\mathbf{E}_{k}$-coalgebra structure extends uniquely to the unique cocommutative coalgebra structure by the dual of \cite[{Proposition 2.4.3.9}]{HA}. 

The coalgebra $\const(X_{\infty})$ determines a commutative algebra object in the symmetric monoidal $\infty$-category $\Fun(\mathbf{N}, \spaces)^{\op}$ which we will denote by $A$. By \cite[{Theorem 3.3.3.9}]{HA}, the undercategory $\ccat \colonequals (\Fun(\mathbf{N}, \spaces)^{\op})_{A /}$ can be extended to an $\infty$-operad 
\[
\ccat^{\otimes} \colonequals \Mod^{\mathbf{E}_{\infty}}_{A}(\Fun(\mathbf{N}, \spaces)^{\op})^{\otimes}
\]
with the property that for a collection of objects
\[
(A \rightarrow B^{i}), (A \rightarrow B) \in (\Fun(\mathbf{N}, \spaces)^{\op})_{A /}
\]
the corresponding space 
\[
\mathrm{Mul}_{\ccat} \Big(\{ (A \rightarrow B^{1}), \ldots, (A \rightarrow B^{n}) \}, (A \rightarrow B)\Big) 
\]
of multimorphisms can be identified with the space of maps $B^{1} \otimes \ldots \otimes B^{n} \rightarrow B$ which make the diagram
\[
\begin{tikzcd}
	{A \otimes \ldots \otimes A} & A \\
	{B^{1} \otimes \ldots \otimes B^{n} } & B
	\arrow[from=1-1, to=1-2]
	\arrow[from=1-1, to=2-1]
	\arrow[from=1-2, to=2-2]
	\arrow[from=2-1, to=2-2],
\end{tikzcd}\]
commute. Here, the unmarked tensor product is that of $\Fun(\mathbf{N}, \spaces)^{\op}$ and the upper horizontal arrow is multiplication of $A$. 

In our case of working with the opposite $\infty$-category, we can think of $B^{i}$ and $B$ as filtered spaces \emph{over} $\const(X_{\infty})$ and the above multispace is the space of maps $B \rightarrow B^{1} \otimes_{\mathrm{Day}} \ldots \otimes_{\mathrm{Day}} B^{n}$ of filtered spaces together with the datum of a commutative diagram 
\[
\begin{tikzcd}
	B & {B^{1} \otimes_{\mathrm{Day}} \ldots \otimes_{\mathrm{Day}} B^{n}} \\
	{\const(X_{\infty})} & {\const(X_{\infty}) \otimes_{\mathrm{Day}} \ldots \otimes_{\mathrm{Day}}\const(X_{\infty})}
	\arrow[from=1-1, to=1-2]
	\arrow[from=1-1, to=2-1]
	\arrow[from=1-2, to=2-2]
	\arrow[from=2-1, to=2-2]
\end{tikzcd}
\]
As the bottom map is induced by the unique comultiplication of a coalgebra in spaces, it can be identified with the cartesian diagonal $\Delta \colon  \const(X_{\infty}) \rightarrow \const(X_{\infty}) \times \ldots \times \const(X_{\infty})$. If we fix $B^{1}, \ldots, B^{n}$ and consider the above space as a functor of $B$, it is representable by the pullback in filtered spaces 
\[
(B^{1} \otimes_{\mathrm{Day}} \ldots \otimes_{\mathrm{Day}} B^{n}) \times_{\const(X_{\infty}) \times \ldots \times \const(X_{\infty})} \const(X_{\infty}). 
\]
Explicitly, in filtered degree $k \in \mathbf{N}$, this is given by 
\[
(\varinjlim B^{1}_{a_{1}} \times \ldots \times B^{n}_{a_{n}}) \times_{X_{\infty} \times \ldots \times X_{\infty}} X_{\infty},
\] 
where the colimit is taken over the poset of tuples of natural numbers such that $a_{1}+\ldots+a_{n} \leq k$. As pullbacks in $\spaces$ are universal, we can rewrite this as 
\[
\varinjlim \Big((B^{1}_{a_{1}} \times \ldots \times B^{n}_{a_{n}}) \times_{X_{\infty} \times \ldots \times X_{\infty}} X_{\infty}\Big) \simeq \varinjlim \Big( B^{1}_{a_{1}} \times_{X_{\infty}} \ldots \times_{X_{\infty}} B^{n}_{a_{n}} \Big). 
\] 
The right-hand side here is the formula for left Day convolution of $\Fun(\mathbf{N}, \spaces_{/X_{\infty}})$. Putting this all together, we have shown that 
\[
\mathrm{Mul}_{\ccat} (\{ B^{1}, \ldots, B^{n} \}, B ) \simeq \Map_{\Fun(\mathbf{N}, \spaces_{/X_{\infty}})}(B, B^{1} \otimes_{\mathrm{Day}} \ldots \otimes_{\mathrm{Day}} B^{n}).
\]
In other words, the $\infty$-operad $\ccat^{\otimes}$ is in fact a symmetric monoidal $\infty$-category, equivalent to $\Fun(\mathbf{N}, \spaces_{/X_{\infty}})^{\op}$.

By \cite[{Corollary 3.4.1.7}]{HA}, an $\mathbf{E}_{k}$-algebra object in $\ccat^{\otimes}$ is the same as a morphism $A \rightarrow B$ of $\mathbf{E}_{k}$-algebras in $\Fun(\mathbf{N}, \spaces)^{\op}$. Dualizing, this means that an $\mathbf{E}_{k}$-coalgebra object in 
\[
\ccat^{\op} \simeq \Fun(\mathbf{N}, \spaces_{/X_{\infty}}) 
\]
is equivalent to an $\mathbf{E}_{k}$-coalgebra in filtered spaces together with a morphism of $\mathbf{E}_{k}$-coalgebras into $\const(X_{\infty})$. Applying this to the unit map $X \rightarrow \const(X_{\infty})$ shows that $X$ lifts to a coalgebra in filtered spaces over $X_{\infty}$, as needed. 
\end{proof}

\begin{proof}[{Proof of \cref{theorem:suspension_spectra_of_complex_numbers_admits_a_filtered_e2_algebra_structure}:}]
By \cite[{Theorem 5.2.3}]{lurie2015rotation}, the filtered space 
\[
\mathbf{CP}^{0} \rightarrow \mathbf{CP}^{1} \rightarrow \mathbf{CP}^{2} \rightarrow \ldots 
\]
admits an $\mathbf{E}_{2}$-coalgebra structure as an object of $\Fun(\mathbf{N}, \spaces)$ equipped with left Day convolution. By \cref{lemma:coalgebras_in_filtered_spaces_lift_to_coalgebras_in_spaces_over_their_colimit}, this filtered space can be lifted to an $\mathbf{E}_{2}$-coalgebra in the $\infty$-category 
\[
\Fun(\mathbf{N}, \spaces_{/\mathbf{CP}^{\infty}}) \simeq \Fun(\mathbf{N}, \spaces^{h S^{1}})
\]
of filtered spaces over $\varinjlim \mathbf{CP}^{n} \simeq \mathbf{CP}^{\infty} \simeq \mathrm{B}S^{1}$, which we can identify with filtered spaces with $S^{1}$-action.  

Passing to suspension spectra, we obtain an $\mathbf{E}_{2}$-coalgebra in filtered spectra with an $S^{1}$-action of the form 
\[
\Sigma^{\infty}_{+} S(\mathbf{C}) \rightarrow \Sigma^{\infty}_{+} S(\mathbf{C}^{2}) \rightarrow \Sigma^{\infty}_{+} S(\mathbf{C}^{3}) \rightarrow \ldots,   
\]
where $S(\mathbf{C}^{k})$ is the $(2k-1)$-dimensional sphere of unit vectors in $\mathbf{C}^{k}$. That is, $\Sigma^{\infty}_{+} S(\mathbf{C}^{\ast+1})$ is a coalgebra in $\Fun(\mathbf{N}, \spectra^{hS^{1}})$, where we consider the latter as symmetric monoidal with respect to (left) Day convolution. The $\infty$-category $\Fun(\mathbf{N}, \spectra^{hS^{1}})$ is dualizable over $\spectra^{hS^{1}}$ and so admits a Serre functor 
\[
\mathrm{Serre} \colon \Fun(\mathbf{N}, \spectra^{hS^{1}}) \rightarrow \Fun(\mathbf{N}, \spectra^{hS^{1}})
\]
which as we observed in \cref{remark:serre_functor_of_non_positively_filtered_objects} is given by the formula\footnote{Note the change in indexing, as in \cref{remark:serre_functor_of_non_positively_filtered_objects} we work with presheaves over $\mathbf{Z}_{\leq 0}$, rather than covariant functors indexed by $\mathbf{N}$.} 
\[
\mathrm{Serre}(X)_{q} \simeq \mathrm{cofib} (X_{q-1} \rightarrow X_{\infty}), 
\]
where $X_{\infty} \colonequals \varinjlim X_{\ast}$ and $X_{-1} = 0$. As we show in \cref{proposition:serre_functor_on_fun_n_c_interchanges_left_and_right_day_convolution}, the Serre functor is symmetric monoidal with respect to left Day convolution on the source and right Day convolution on the target. Applying it to the coalgebra of the previous paragraph, we see that $B \colonequals \mathrm{Serre}(\Sigma^{\infty}_{+} S(\mathbf{C}^{\ast+1}))$, of the form  
\[
B_{q} \simeq \mathrm{cofib}(\Sigma^{\infty}_{+}S(\mathbf{C}^{q}) \rightarrow \varinjlim \Sigma^{\infty}_{+}S(\mathbf{C}^{\infty})) \simeq \thesphere^{(q)\mathbf{C}},
\]
admits a structure of an $\mathbf{E}_{2}$-coalgebra in $\Fun(\mathbf{N}, \spectra^{hS^{1}})$ with respect to right Day convolution. This filtered object is levelwise dualizable, and passing to monoidal duals gives an object of $\presheaves(\mathbf{N}; \spectra^{hS^{1}})$ of the form 
\[
\ldots \rightarrow \thesphere^{(-2)\mathbf{C}} \rightarrow \thesphere^{(-1)\mathbf{C}} \rightarrow \thesphere.
\]
As taking monoidal duals takes finite limits to colimits, this is an $\mathbf{E}_{2}$-algebra in the usual sense, i.e. with respect to left Day convolution. By left Kan extending along the inclusion $\mathbf{N} \hookrightarrow \mathbf{Z}$, we obtain an $\mathbf{E}_{2}$-algebra object $R$ of $\presheaves(\mathbf{Z}; \spectra^{hS^{1}}) = \Fil^{\down}(\spectra^{hS^{1}})$ of the form 
\[
\ldots \rightarrow \thesphere^{(-2)\mathbf{C}} \rightarrow \thesphere^{(-1)\mathbf{C}} \rightarrow \thesphere \rightarrow \thesphere \rightarrow \thesphere \rightarrow \ldots. 
\]
The inclusion of the first filtered piece determines a map of filtered spectra with $S^{1}$-action 
\[
\alpha \colon (1)_{!}(\thesphere^{(-1)\mathbf{C}}) \rightarrow R, 
\]
where the source is the filtered object freely generated by $\thesphere^{(-1)\mathbf{C}}$ in degree $1$. The latter is invertible under the tensor product, so using the methods of \cite[{\S 7.2.3}]{HA} we can form the localization $R[\alpha^{-1}]$ which since $R$ is $\mathbf{E}_{2}$ and hence satisfies the Ore condition can be identified with the colimit of the diagram 
\[
R \rightarrow R \otimes (-1)_{!}(\thesphere^{(1)\mathbf{C}}) \rightarrow R \otimes (-2)_{!}(\thesphere^{(2)\mathbf{C}}) \rightarrow \ldots.
\]
After unwrapping what this means, we see that $R[\alpha^{-1}]$ is an $\mathbf{E}_{2}$-algebra of the form 
\[
\ldots \rightarrow \thesphere^{(-2)\mathbf{C}} \rightarrow \thesphere^{(-1)\mathbf{C}} \rightarrow \thesphere \rightarrow \thesphere^{(1)\mathbf{C}} \rightarrow \thesphere^{(2)\mathbf{C}} \rightarrow \ldots
\]
as required. 
\end{proof}

\section{Doubling functor on filtered spectra} 

In this section, we study the ``doubling'' functor $d_{!} \colon \Fil\spectra \rightarrow \Fil\spectra$. We make explicit how the functor changes the associated spectral sequence, see \cref{remark:spectral_sequence_of_evenification}, and how it interacts with d\'{e}calage, see \cref{proposition:decalage_and_doubling_interaction}. 

\begin{recollection}
\label{recollection:evenification_functor_on_filtered_spectra}
We will write $d \colon \ZZ \rightarrow \ZZ$ for the multiplication by two functor. This induces an adjunction 
\[
d_{!} \dashv d^{*} \colon \Fil \spectra \rightleftarrows \Fil \spectra 
\]
where $d^{*}$ is given by precomposition and $d_{!}$ is the left Kan extension. Explicitly, the value of $d_{!}$ at $X \in \Fil \spectra$ is given by the formula
\[
F^{q}(d_{!} X) \simeq F^{\ceil{\frac{q}{2}}} X. 
\]
At the level of associated graded objects, we instead have 
\[
\gr^{q}(d_{!} X) \simeq  
\begin{cases}
\gr^{\frac{q}{2}}(X) \textnormal{ if } q  \textnormal{ is even} \\
0 \textnormal{ otherwise} 
\end{cases}.
\]
\end{recollection}

\begin{recollection}[{\cite[Lemma 5.14]{van2025introduction}}]
Since $d$ is fully faithful, so is $d_{!}$, and so its essential image is a coreflective subcategory. Moreover, for a filtered spectrum $X$ the following three conditions are equivalent: 
\begin{enumerate}
    \item $F^{q} X \rightarrow F^{q-1} X$ is an equivalence for each $q \in 2 \ZZ$, 
    \item $\gr^{q} X = 0$ for each $q \in 2 \ZZ + 1$, 
    \item $X$ is in the essential image of $d_{!}$. 
\end{enumerate} 
\end{recollection}

\begin{remark}
\label{remark:spectral_sequence_of_evenification}
A filtered spectrum $X$ has an associated spectral sequence of signature
\[
E^{1}_{s, t}(X) \simeq \pi_{s+t}(\gr^{-s} X) \Rightarrow \pi_{t+s}(| X |)
\]
where $| X | \colonequals \varinjlim X$ is the colimit. Using the explicit formulas of \cite[{Construction 1.2.2.6}]{HA}, it is not difficult to see that the spectral sequence associated to the evenification $d_{!}X$ can be identified, up to regrading, with the spectral sequence of $X$. Note that Lurie's construction agrees with the one based on d\'{e}calage by \cite[{Remark 4.15}]{antieau:decalage}. 

Explicitly, the formulas are as follows. For all $s, t \in \ZZ$ and all $r \geq 1$ we have 
\begin{enumerate}
    \item $E^{2r-1}_{2s, t}(d_{!}X) \simeq E^{2r}_{2s, t}(d_{!} X) \simeq E^{r}_{s, s+t}(X)$, 
    \item $E^{r}_{2s-1, t}(d_{!}X) = 0$, 
\end{enumerate}
Under the first isomorphism, the odd page differentials vanish while 
\[
d_{2r}^{d_{!}X} \colon E^{2r}_{2s, t}(d_{!}X) \rightarrow E^{2r}_{2s-2r, t+2r}(d_{!}X)
\]
can be identified with 
\[
d_{r}^{X} \colon E^{r}_{s, s+t}(X) \rightarrow E^{r}_{s-r, s+t+r}(X). 
\]
\end{remark}

\begin{lemma}
\label{lemma:homotopy_of_decalage_of_evenification}
Let $X$ be a filtered spectrum. Then for every $a, k \in \ZZ$ we have an isomorphism 
\[
(\pi^{B}_{2a} (\Dec(d_{!}X)))^{k} \simeq (\pi^{B}_{a} X)^{k-a}
\]
of chain complexes. In odd degrees, we instead have $\pi^{B}_{2a-1}(\Dec(d_{!}X)) = 0$. 
\end{lemma}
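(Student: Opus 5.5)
The plan is to reduce the statement to a comparison of spectral sequences. The chain complexes $\pi^{B}_{n}(Y)$ of a filtered spectrum $Y$ are exactly the $E^{1}$-page of its spectral sequence with the $d_{1}$-differential, arranged along lines of constant total degree. Concretely, $(\pi^{B}_{n} Y)^{k} \simeq \pi_{n-k}(\gr^{k} Y)$, and the differential is induced by the boundary maps $\gr^{k} Y \rightarrow \gr^{k+1} Y[1]$. So to compute $\pi^{B}_{\ast}(\Dec(d_{!}X))$ it suffices to identify the $E^{1}$-page of $\Dec(d_{!}X)$, together with its $d_{1}$.

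First, I would invoke Antieau's comparison between d\'{e}calage and page-shifting \cite{antieau:decalage}. The $E^{r}$-page of $\Dec(Y)$ is naturally isomorphic, after the standard regrading, to the $E^{r+1}$-page of $Y$, compatibly with differentials. Equivalently, at the level of Beilinson homotopy, $\gr^{k}\Dec(Y)$ has homotopy given by the $E^{2}$-terms of $Y$. Applied to $Y = d_{!}X$, this identifies the $E^{1}$-page of $\Dec(d_{!}X)$ with $d_{1}$ as the $E^{2}$-page of $d_{!}X$ with $d_{2}$.

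Second, I would apply \cref{remark:spectral_sequence_of_evenification}. It gives $E^{2}_{2s,t}(d_{!}X) \simeq E^{1}_{s,s+t}(X)$ and $E^{2}_{2s-1,t}(d_{!}X) = 0$. Under this identification, $d_{2}^{d_{!}X}$ corresponds to $d_{1}^{X}$. Composing the two steps, the $E^{1}$-page of $\Dec(d_{!}X)$ with its $d_{1}$ is the $E^{1}$-page of $X$ with its $d_{1}$, spread out so that only even total degrees are occupied. The vanishing of the odd columns of $E^{2}(d_{!}X)$ translates into the vanishing of $\pi^{B}_{2a-1}(\Dec(d_{!}X))$. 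For the even degrees, I would track an element of $\pi_{a-(k-a)}(\gr^{k-a}X) = (\pi^{B}_{a}X)^{k-a}$ through both regradings. This should land in $\pi_{2a-k}(\gr^{k}\Dec(d_{!}X)) = (\pi^{B}_{2a}\Dec(d_{!}X))^{k}$, which gives the claimed isomorphism termwise. The differentials match because $d_{1}^{X}$ raises $k-a$ by one exactly as $d_{1}$ of $\Dec(d_{!}X)$ raises $k$ by one.

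The main obstacle is the bookkeeping. Antieau's regrading for $\Dec$ (in his conventions, $\gr^{k}\Dec(Y) \simeq \pi^{B}_{k}(Y)[k]$ viewed appropriately) has to be composed with the regrading of \cref{remark:spectral_sequence_of_evenification} without sign or shift errors. To avoid relying on spectral-sequence conventions, I would first try a direct computation. By Antieau, $\gr^{k}\Dec(Y) \simeq \pi_{k}^{B}(Y)[k]$ in the Beilinson sense, and $\pi^{B}_{m}(d_{!}X)$ can be computed directly from $\gr(d_{!}X)$, which is concentrated in even degrees by \cref{recollection:evenification_functor_on_filtered_spectra}. I would verify the degree shifts first on a single shifted generator $(n)_{!}\thesphere[m]$, where everything is explicit, and then conclude in general by naturality and exactness of $\gr$.
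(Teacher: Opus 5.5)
Your proposal is correct and is essentially the paper's argument: the paper likewise writes $(\pi^{B}_{2a}\Dec(d_{!}X))^{k} \simeq E^{1}_{-k,2a}(\Dec(d_{!}X)) \simeq E^{2}_{2a-2k,k}(d_{!}X) \simeq E^{1}_{a-k,a}(X) \simeq (\pi^{B}_{a}X)^{k-a}$, using Antieau's d\'{e}calage comparison and \cref{remark:spectral_sequence_of_evenification}, and the odd case follows from the vanishing of the odd columns, exactly as you outline. Your fallback ``direct computation'' is unnecessary, and on its own it would only give the termwise identification, since matching the differentials is precisely what Antieau's comparison theorem supplies.
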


\begin{proof}
Since $(\pi_{s}^{B} X)^{t} \simeq E^{1}_{-t, s}(X)$, the first statement is equivalent to
\[
E^{1}_{-k, 2a}(\Dec(d_{!}X)) \simeq E^{2}_{2a-2k, k}(d_{!}X) \simeq E^{1}_{2a-2k, k}(d_{!}X) \simeq E^{1}_{a-k, a}(X)
\]
where the last two equivalences are \cref{remark:spectral_sequence_of_evenification}. The second follows in the same way. 
\end{proof}

\begin{lemma}
\label{lemma:behavior_of_d\'{e}calage_of_doubling_in_beilinson_t_structure}
For each $a \in \ZZ$, the map $\Dec(d_{!}(\tau_{\geq a}^{B} X)) \rightarrow \Dec(d_{!}X)$ is a Beilinson $2a$-connective cover. 
\end{lemma}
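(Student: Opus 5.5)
We would first reduce to two separate statements, one about connectivity and one about coconnectivity. The Beilinson truncation gives a cofibre sequence $\tau^{B}_{\geq a} X \rightarrow X \rightarrow \tau^{B}_{\leq a-1} X$ in $\Fil \spectra$. Both $d_{!}$ (a left Kan extension, computed levelwise by $F^{q}(d_{!}X) \simeq F^{\lceil q/2 \rceil} X$) and $\Dec$ are exact functors of stable $\infty$-categories. So applying $\Dec \circ d_{!}$ yields a cofibre sequence
\[
\Dec(d_{!}(\tau^{B}_{\geq a} X)) \rightarrow \Dec(d_{!}X) \rightarrow \Dec(d_{!}(\tau^{B}_{\leq a-1} X)).
\]
It then suffices to show two things, for an arbitrary filtered spectrum $Y$:
\begin{enumerate}
\item if $Y$ is Beilinson $a$-connective, then $\Dec(d_{!}Y)$ is Beilinson $2a$-connective;
\item if $Y$ is Beilinson $(a-1)$-coconnective, then $\Dec(d_{!}Y)$ is Beilinson $(2a-1)$-coconnective.
\end{enumerate}
The map in the statement is then a $2a$-connective cover by uniqueness of truncation sequences.

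The heuristic input for both claims is \cref{lemma:homotopy_of_decalage_of_evenification}. It identifies $\pi^{B}_{2b}(\Dec(d_{!}Y))$ with a reindexed copy of $\pi^{B}_{b}(Y)$ and shows that the odd Beilinson homotopy objects vanish. Hence if $\pi^{B}_{b}Y = 0$ for $b < a$, all Beilinson homotopy objects of $\Dec(d_{!}Y)$ below degree $2a$ vanish. Likewise, if $\pi^{B}_{b}Y = 0$ for $b \geq a$, they vanish in degrees $\geq 2a$, since degree $2a-1$ is odd and is automatically zero.

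To turn this vanishing of homotopy objects into actual (co)connectivity, I would not rely on completeness of the Beilinson t-structure. Instead I would argue directly with the levelwise characterizations:
\begin{itemize}
\item $Z$ is Beilinson $m$-connective if and only if $\gr^{i} Z \in \spectra_{\geq m-i}$ for all $i$;
\item $Z$ is Beilinson $m$-coconnective if and only if $F^{i} Z \in \spectra_{\leq m-i}$ for all $i$.
\end{itemize}
For the coconnective claim, $d_{!}$ gives $F^{q}(d_{!}Y) \simeq F^{\lceil q/2\rceil}Y \in \spectra_{\leq a-1-\lceil q/2\rceil}$. I would then insert the explicit formula for $\Dec$ from \cite{antieau:decalage}, which builds $F^{n}\Dec(Z)$ out of Postnikov truncations of the $F^{i}Z$ with shifted indices, and track the bounds. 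For the connective claim, I would do the same with the associated graded of $\Dec(d_{!}Y)$, which is expressed through the $E^{2}$-page, i.e.\ through the Beilinson homotopy objects. This is exactly the content of \cref{lemma:homotopy_of_decalage_of_evenification} together with the identification of $\gr^{k}\Dec$ with the shifted Beilinson homotopy complexes.

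I expect the main obstacle to be this last bookkeeping step. The difficulty is checking that the degree estimates coming from $\lceil q/2 \rceil$ interact correctly with the index shift in $\Dec$, so that one lands exactly on $2a$ and $2a-1$ rather than being off by one. The first thing I would try is to verify the two claims on generators: for the connective part, use $Y = (i)_{!}\Sigma^{j}\thesphere$ with $i + j \geq a$, which generate the Beilinson $a$-connective part under colimits and extensions; both $d_{!}$ and $\Dec$ preserve the relevant colimits. For the coconnective part, I would argue dually, using the levelwise formula.
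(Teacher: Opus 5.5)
Your overall decomposition into a connective claim and a coconnective claim is sound, and both claims are true. The proposal still has two genuine gaps.

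\textbf{First gap: $\Dec$ is not exact.} You justify the displayed cofibre sequence by saying $\Dec$ is exact, and that is false. Since $F^{n}\Dec(Z)\simeq|\tau^{B}_{\geq n}Z|$ is built from Beilinson connective covers, already $\Dec((0)_{!}X)\simeq\tau_{\geq\ast}X$ is the Whitehead tower, which is not exact in $X$. The sequence does hold here, but for a reason you must supply:
\begin{itemize}
\item $\gr^{i}\tau^{B}_{\geq a}X\simeq\tau_{\geq a-i}\gr^{i}X$;
\item $\pi^{B}_{k}(d_{!}W)$ has $\pi_{k-2i}\gr^{i}W$ in cochain degree $2i$ and zero in odd degrees.
\end{itemize}
Hence $\pi^{B}_{\ast}(d_{!}\tau^{B}_{\geq a}X)\rightarrow\pi^{B}_{\ast}(d_{!}X)$ is a monomorphism. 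A cover $\tau^{B}_{\geq n}$ preserves a cofibre sequence $A\rightarrow B\rightarrow C$ whenever $\pi^{B}_{n-1}A\rightarrow\pi^{B}_{n-1}B$ is injective, so every $\tau^{B}_{\geq n}$, and hence $\Dec$, preserves your sequence.

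Your generator argument for the connective claim relies on the same false exactness. Your other route does work: $\gr^{k}\Dec(Z)\simeq|\tau^{B}_{\geq k}\tau^{B}_{\leq k}Z|$ gives $\pi_{m}\gr^{k}\Dec(d_{!}Y)\cong\pi_{m}\gr^{(k-m)/2}Y$ for $k-m$ even and $0$ otherwise. This vanishes for $m<2a-k$ when $Y$ is Beilinson $a$-connective.

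\textbf{Second gap: the coconnective claim has no argument.} Beilinson truncations are not computed levelwise, so the formula for $F^{n}\Dec(Z)$ via Postnikov truncations of the $F^{i}Z$ that you invoke is not available. A ``dual'' generator argument also fails, since $\Dec$ ends with the colimit $|{-}|$ and does not preserve limits. The missing idea is completeness:
\begin{itemize}
\item If $F^{i}Y\in\spectra_{\leq a-1-i}$ for all $i$, then $\varprojlim_{i}F^{i}Y\simeq 0$, so $d_{!}Y$ is complete.
\item $\tau^{B}_{\geq n}d_{!}Y$ is then complete too, being the fibre of a map to the coconnective, hence complete, object $\tau^{B}_{\leq n-1}d_{!}Y$.
\item Its odd graded pieces vanish, and $\gr^{2i}\tau^{B}_{\geq n}d_{!}Y\simeq\tau_{\geq n-2i}\gr^{i}Y$ is zero unless $i\geq n-a+1$. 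In that case it lies in $\spectra_{\leq a-1-i}\subseteq\spectra_{\leq 2a-2-n}$.
\item Since $\spectra_{\leq 2a-2-n}$ is closed under extensions, limits and filtered colimits, $F^{n}\Dec(d_{!}Y)\simeq|\tau^{B}_{\geq n}d_{!}Y|\in\spectra_{\leq 2a-2-n}$.
\end{itemize}

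The paper uses completeness in a different way. It settles the complete case directly from the preceding computation of $\pi^{B}_{\ast}\Dec(d_{!}X)$. For general $X$ it uses $F=\mathrm{fib}(X\rightarrow\widehat{X})$, which is constant and so Beilinson $\infty$-connective. Such an object passes unchanged through $\tau^{B}_{\geq a}$, $d_{!}$ and $\Dec$, which is why the rows of its diagram stay fibre sequences even though $\Dec$ is not exact.
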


\begin{proof}
If $X \simeq \widehat{X}$ is complete, the statement follows from \cref{lemma:homotopy_of_decalage_of_evenification}. In general, taking the fibre $F \colonequals \mathrm{fib}(X \rightarrow \widehat{X})$, we have a commutative diagram 
\[
\begin{tikzcd}
	{\Dec(d_{!}\tau_{\geq a}^{B}F)} & {\Dec(d_{!}\tau_{\geq a}^{B}X)} & {\Dec(d_{!}\tau_{\geq a}^{B} \widehat{X})} \\
	{\Dec(d_{!}F)} & {\Dec(d_{!}X)} & {\Dec(d_{!}\widehat{X})}
	\arrow[from=1-1, to=1-2]
	\arrow[from=1-1, to=2-1]
	\arrow[from=1-2, to=1-3]
	\arrow[from=1-2, to=2-2]
	\arrow[from=1-3, to=2-3]
	\arrow[from=2-1, to=2-2]
	\arrow[from=2-2, to=2-3]
\end{tikzcd}
\]
where rows are fibre sequences. The left vertical map is an equivalence of constant filtered spectra and the result follows. 
\end{proof}

\begin{proposition}
\label{proposition:decalage_and_doubling_interaction}
For any $X \in \Fil(\spectra)$ there is a natural equivalence of filtered spectra 
\[
\Dec(\Dec(d_{!}X)) \simeq d_{!} \Dec(X). 
\]
\end{proposition}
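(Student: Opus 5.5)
We prove the equivalence by identifying both sides with Beilinson connective covers of the same filtered spectrum. Recall from \cite{antieau:decalage} that décalage is computed via the Beilinson t-structure: for any $Y \in \Fil(\spectra)$ we have $F^{q}\Dec(Y) \simeq |\tau^{B}_{\geq q} Y|$, the underlying object of the Beilinson $q$-connective cover. So we need, naturally in $X$ and $q$,
\[
|\tau^{B}_{\geq q}\Dec(d_{!}X)| \simeq F^{q}(d_{!}\Dec X) \simeq F^{\ceil{q/2}}\Dec(X) \simeq |\tau^{B}_{\geq \ceil{q/2}} X|,
\]
compatibly with the transition maps.

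\textbf{Step 1: even $q$.} Take $q = 2a$. By \cref{lemma:behavior_of_d\'{e}calage_of_doubling_in_beilinson_t_structure}, the map $\Dec(d_{!}\tau^{B}_{\geq a}X) \to \Dec(d_{!}X)$ is a Beilinson $2a$-connective cover. Hence
\[
|\tau^{B}_{\geq 2a}\Dec(d_{!}X)| \simeq |\Dec(d_{!}\tau^{B}_{\geq a}X)|.
\]
Both $d_{!}$ and $\Dec$ preserve underlying objects: $d_{!}$ is a left Kan extension along a cofinal map, and $|\Dec Y| \simeq |Y|$. Therefore the right-hand side is $|\tau^{B}_{\geq a}X| = F^{a}\Dec(X)$. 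This identification is natural, since it is induced by the canonical maps.

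\textbf{Step 2: odd $q$.} Take $q = 2a-1$. By \cref{lemma:homotopy_of_decalage_of_evenification}, the Beilinson homotopy $\pi^{B}_{2a-1}\Dec(d_{!}X)$ vanishes. So the map $\tau^{B}_{\geq 2a}\Dec(d_{!}X) \to \tau^{B}_{\geq 2a-1}\Dec(d_{!}X)$ has cofibre $\pi^{B}_{2a-1}[2a-1] = 0$, i.e. it is an equivalence. This gives $F^{2a-1}\Dec\Dec(d_{!}X) \simeq F^{2a}\Dec\Dec(d_{!}X) \simeq F^{a}\Dec(X)$, matching $F^{2a-1}(d_{!}\Dec X) = F^{a}\Dec X$.

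\textbf{Step 3: assembling a map of filtered objects.} The main obstacle is producing a genuinely natural map of filtered objects rather than levelwise equivalences. We would construct it via the adjunction $d_{!} \dashv d^{*}$. A map $d_{!}\Dec(X) \to \Dec\Dec(d_{!}X)$ is the same as a map $\Dec(X) \to d^{*}\Dec\Dec(d_{!}X)$, and Step 1 supplies one functorially in the filtration degree. Concretely, it comes from the filtered system $a \mapsto \Dec(d_{!}\tau^{B}_{\geq a}X)$, which maps to $\Dec(d_{!}X)$ through Beilinson $2a$-connective objects and therefore factors through $\tau^{B}_{\geq 2a}$ by the universal property. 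Step 1 shows the adjoint map is an equivalence in degrees $2a$. Since the source $d_{!}\Dec(X)$ lies in the image of $d_{!}$, Step 2 shows the target is also in the essential image of $d_{!}$, by the characterization of that image via $F^{2a}\to F^{2a-1}$ being equivalences. Hence the map is an equivalence.

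The remaining care point is the non-complete case. Here we would rely on \cref{lemma:behavior_of_d\'{e}calage_of_doubling_in_beilinson_t_structure}, which already handles it, rather than on the spectral-sequence computation alone.
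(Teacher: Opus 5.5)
Your proposal is correct and follows essentially the same route as the paper. Both arguments show that $\Dec(\Dec(d_{!}X))$ is even and then establish the even-degree chain $|\tau^{B}_{\geq 2a}\Dec(d_{!}X)| \simeq |\Dec(d_{!}\tau^{B}_{\geq a}X)| \simeq |\tau^{B}_{\geq a}X|$ using the Beilinson-cover lemma and the fact that $\Dec$ and $d_{!}$ preserve colimits. The differences are only in detail, and both are helpful. You deduce evenness directly from the vanishing of $\pi^{B}_{2a-1}$, which holds without any completeness hypothesis. You also construct the comparison map explicitly through the $d_{!} \dashv d^{*}$ adjunction, a step the paper leaves implicit.
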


\begin{proof}
By \cref{lemma:behavior_of_d\'{e}calage_of_doubling_in_beilinson_t_structure}, the left-hand side is an even filtered spectrum, so it is enough to verify the equivalence in even degrees. If $a \in \ZZ$, then 
\[
\Dec_{2a}(\Dec(d_{!}X)) \simeq | \tau_{\geq 2a}^{B} \Dec(d_{!} X) | \simeq | \Dec(d_{!} \tau_{\geq a}^{B} X) | \simeq | \tau_{\geq a}^{B} X | \simeq \Dec_{a}(X),
\]
where $|-|$ denotes the colimit of a filtered spectrum. Here we use that neither the d\'{e}calage nor $d_{!}$ change the colimit. 
\end{proof}

\begin{remark}
\label{remark:bifiltered_refinement_of_the_interaction_between_decalage_and_doubling}
The equivalence of \cref{proposition:decalage_and_doubling_interaction} can be lifted to an equivalence of \emph{bifiltered} objects using functoriality. More precisely, if we write $F^{s} X$ for the filtered spectrum defined by 
\[
(F^{s} X)_{t} \colonequals \begin{cases} X_{t} \textnormal{ for } t \geq s \\
X_{s} \textnormal{ otherwise}  
\end{cases}, 
\]
then
\[
\Dec_{2a}(\Dec(d_{!} F^{s}X)) \simeq \Dec_{a}(F^{s}X) \simeq (\pi_{\geq a}^{B} X)_{s}.
\]
\end{remark}

\section{Comparison with the homotopy Tate filtration}

In this section, we compare the cellular and homotopy Tate filtrations, and record explicitly the isomorphism between the associated spectral sequences. 

\begin{theorem}
\label{theorem:decalage_of_evenification_of_bhatt_lurie_tate_is_the_tate_of_postnikov}
Let $X$ be a spectrum with $S^{1}$-action and let $F^{\ast}_{T}(X^{tS^{1}})$ be the cellular Tate filtration of \cref{definition:cellular_tate_filtration}. Then 
\begin{enumerate}
    \item for every $a \in \ZZ$, the map 
    \[
    d_{!} F_{T}^{\ast}((\tau_{\geq a} X)^{tS^{1}}) \rightarrow d_{!} F_{T}^{\ast}(X^{tS^{1}})
    \]
    is an $a$-connective cover with respect to the Beilinson t-structure, 
    \item there is an equivalence of filtered spectra
    \[
    \Dec(d_{!}(F_{T}^{\ast}(X^{tS^{1}}))) \simeq (\tau_{\geq \ast} X)^{tS^{1}}.
    \]
\end{enumerate}
\end{theorem}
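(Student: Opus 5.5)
The plan is to prove (1) first and deduce (2) from it, using that d\'{e}calage is computed from the Beilinson connective covers: $\Dec_a(Y) \simeq |\tau^{B}_{\geq a} Y|$. This is the same formula used in the proof of \cref{proposition:decalage_and_doubling_interaction}.

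For (1), write $Y = d_{!}F_T^{\ast}(X^{tS^1})$ and $Y_a = d_{!}F_T^{\ast}((\tau_{\geq a}X)^{tS^1})$. There are two things to check.

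First, $Y_a$ is Beilinson $a$-connective. Recall that a filtered spectrum $Z$ is Beilinson $a$-connective if and only if $\gr^q Z$ is $(a-q)$-connective for all $q$. By \cref{recollection:evenification_functor_on_filtered_spectra}, the only nonzero graded pieces of $Y_a$ are
\[
\gr^{2q} Y_a \simeq \gr^q_T((\tau_{\geq a}X)^{tS^1}) \simeq (\tau_{\geq a}X)[-2q],
\]
which is $(a-2q)$-connective, as required. This is where the doubling is essential: without it, $\gr^q_T$ would be $(a-2q)$-connective but not $(a-q)$-connective.

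Second, the cofibre $C = d_{!}F_T^{\ast}((\tau_{\leq a-1}X)^{tS^1})$ is Beilinson $(a-1)$-coconnective. Note $C$ is indeed the cofibre of $Y_a \to Y$, since $F_T^{\ast}$ and $d_{!}$ are exact. Here I would use the filtered-piece criterion: $Z$ is Beilinson $b$-coconnective if and only if $F^q Z$ is $(b-q)$-coconnective for all $q$, i.e.\ $\pi_i F^q Z = 0$ for $i > b-q$. We have $F^q C = (\tau_{\leq a-1}X \otimes \thesphere^{-(\lceil q/2\rceil)\mathbf{C}})^{hS^1}$. Homotopy fixed points preserve coconnectivity (they are a limit), and $\tau_{\leq a-1}X \otimes \thesphere^{-(m)\mathbf{C}}$ is $(a-1-2m)$-coconnective. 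Hence $F^q C$ is $(a-1-2\lceil q/2\rceil)$-coconnective. Since $2\lceil q/2\rceil \geq q$, it is in particular $(a-1-q)$-coconnective, as needed.

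Granting both facts, the fibre sequence $Y_a \to Y \to C$ identifies $Y_a \simeq \tau^{B}_{\geq a} Y$. Naturality in $a$ is automatic because the identification is canonical in the t-structure.

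For (2), take colimits:
\[
\Dec_a(Y) \simeq |\tau^B_{\geq a}Y| \simeq |d_{!}F_T^{\ast}((\tau_{\geq a}X)^{tS^1})| \simeq |F_T^{\ast}((\tau_{\geq a}X)^{tS^1})| \simeq (\tau_{\geq a}X)^{tS^1}.
\]
The third equivalence holds because $d_{!}$ does not change the colimit. The last is \cref{recollection:properties_of_the_bt_tate_filtration}(1), and it is natural in the $S^1$-spectrum. The transition maps in $a$ agree on both sides, since each is induced by $\tau_{\geq a+1}X \to \tau_{\geq a}X$ through these natural identifications. This gives the equivalence of filtered spectra.

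I expect the main obstacle to be the coconnectivity half of (1). I need to be confident about which criterion for Beilinson coconnectivity to use: the graded-piece criterion only detects coconnectivity for complete objects, while the filtered-piece criterion is correct in general, and I would confirm the latter against \cite{antieau:decalage}. I also need to check that the $S^1$-action on $\thesphere^{-(m)\mathbf{C}}$ does not spoil the non-equivariant connectivity estimate, which holds because the underlying spectrum is just $\Sigma^{-2m}\thesphere$ and connectivity is checked there.
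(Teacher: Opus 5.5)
Your proposal is correct and follows essentially the same route as the paper: it checks Beilinson $a$-connectivity of $d_! F_T^{\ast}((\tau_{\geq a}X)^{tS^1})$ on graded pieces, shows the cofibre $d_! F_T^{\ast}((\tau_{<a}X)^{tS^1})$ is $(a-1)$-coconnective, and deduces (2) by taking colimits via \cref{recollection:properties_of_the_bt_tate_filtration}. The only difference is in the coconnectivity step: the paper reads it off the same graded formula, which tacitly uses that this cofibre is complete (true because $\tau_{<a}X$ is bounded above), whereas your filtered-piece estimate using $2\lceil q/2\rceil \geq q$ checks it directly and so avoids the completeness point.
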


\begin{proof}
We follow Antieau's argument of \cite[Proposition 9.2]{antieau:decalage}. By a combination of \cref{recollection:properties_of_the_bt_tate_filtration} and \cref{recollection:evenification_functor_on_filtered_spectra}, 
\[
\gr^{q}(d_{!} F_{T}^{\ast}((\tau_{\geq a} X)^{tS^{1}})) \simeq  
\begin{cases}
(\tau_{\geq a} X)[-q] \textnormal{ if } q \textnormal{ is even} \\
0 \textnormal{ otherwise},
\end{cases}
\]
so that the source of the map in $(1)$ is Beilinson $a$-connective. Since the cofibre is given by $d_{!} F_{T}^{\ast}((\tau_{< a}X)^{tS^{1}})$, the same formula shows that the cofibre is $(a-1)$-coconnective, proving the first part. The second part follows from the first by taking colimits along the cellular Tate filtration and \cref{recollection:properties_of_the_bt_tate_filtration}.
\end{proof}

\begin{corollary}
\label{corollary:bt_tate_and_standard_tate_have_isomorphic_spectral_sequences}
Up to suitable reindexing, the spectral sequences associated to $F^{\ast}_{T}(X^{tS^{1}})$ and $(\tau_{\geq \ast} X)^{tS^{1}}$ are isomorphic. More precisely, for each $r \geq 1$ we have 
\[
E^{r}_{s, s+t}(F_{T}^{\ast}(X^{tS^{1}})) \simeq E^{2r}_{2s, t}(d_{!}(F_{T}^{\ast}(X^{tS^{1}}))) \simeq E^{2r-1}_{-t, 2s+2t}((\tau_{\geq \ast} X)^{tS^{1}}). 
\]
\end{corollary}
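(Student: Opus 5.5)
The isomorphism is assembled from two identifications, one already recorded and one to be extracted from \cref{theorem:decalage_of_evenification_of_bhatt_lurie_tate_is_the_tate_of_postnikov}. The first isomorphism $E^{r}_{s,s+t}(F_{T}^{\ast}(X^{tS^{1}})) \simeq E^{2r}_{2s,t}(d_{!}F_{T}^{\ast}(X^{tS^{1}}))$ is exactly part (1) of \cref{remark:spectral_sequence_of_evenification}, applied to the filtered spectrum $F_{T}^{\ast}(X^{tS^{1}})$. That remark also identifies the differentials $d_{2r}$ with $d_{r}$, and shows the odd-page differentials vanish, so this step is complete.

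For the second isomorphism, write $Y \colonequals d_{!}F_{T}^{\ast}(X^{tS^{1}})$. Part (2) of \cref{theorem:decalage_of_evenification_of_bhatt_lurie_tate_is_the_tate_of_postnikov} gives $\Dec(Y) \simeq (\tau_{\geq \ast}X)^{tS^{1}}$. So it suffices to know how the spectral sequence of a d\'{e}calage relates to that of the original filtered spectrum. The standard fact, due to Deligne and proved in the $\infty$-categorical setting by Antieau \cite{antieau:decalage}, is that the spectral sequence of $\Dec(Y)$ is a reindexing of that of $Y$ shifted by one page. Explicitly:
\begin{itemize}
\item the $E^{1}$-page of $\Dec(Y)$ is built from the Beilinson homotopy objects $\pi^{B}_{\ast}Y$, whose chain complexes form the $E^{1}$-page of $Y$ with its $d_{1}$;
\item hence $E^{r}(\Dec Y)$ is isomorphic to $E^{r+1}(Y)$ for $r \geq 1$, with compatible differentials.
\end{itemize}
This is the same mechanism already used in the proof of \cref{lemma:homotopy_of_decalage_of_evenification}. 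There, with the convention $(\pi^{B}_{s}X)^{t}\simeq E^{1}_{-t,s}(X)$, the identification reads $E^{1}_{-k,n}(\Dec Y)\simeq E^{2}_{n-2k,k}(Y)$. The general statement is $E^{r}_{-k,n}(\Dec Y)\simeq E^{r+1}_{n-2k,k}(Y)$.

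Now substitute $r \mapsto 2r-1$, $k = t$, and $n = 2s+2t$. This gives
\[
E^{2r-1}_{-t,2s+2t}(\Dec Y)\simeq E^{2r}_{2s+2t-2t,t}(Y)=E^{2r}_{2s,t}(Y),
\]
which is the second claimed isomorphism. Composing with the first isomorphism proves the corollary. I would also check that the differentials match under both identifications, so that the result is an isomorphism of spectral sequences and not just of pages. This follows from the compatibility statements in \cref{remark:spectral_sequence_of_evenification} and in Antieau's comparison.

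I expect the main obstacle to be the bookkeeping in the d\'{e}calage comparison rather than anything conceptual. Two points need care. First, the sign and grading conventions ($E^{1}_{s,t}=\pi_{s+t}\gr^{-s}$ versus the Beilinson indexing) must be tracked carefully enough that the bidegrees come out exactly as stated. Second, Antieau's page-shift result is formulated for his construction of the spectral sequence, so one must invoke \cite[Remark 4.15]{antieau:decalage} to know that it agrees with Lurie's construction, which \cref{remark:spectral_sequence_of_evenification} uses. If Antieau's statement is only given for $E^{1}\leftrightarrow E^{2}$, I would instead deduce the higher pages inductively. The induction would use the fact that both spectral sequences are computed from the same tower $\tau^{B}_{\geq \ast}$ of cofibre sequences after applying $|{-}|$.
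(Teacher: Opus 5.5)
Your proposal is correct and follows the paper's proof. The first isomorphism is \cref{remark:spectral_sequence_of_evenification}, and the second comes from the equivalence $\Dec(d_{!}F_{T}^{\ast}(X^{tS^{1}})) \simeq (\tau_{\geq \ast}X)^{tS^{1}}$ of \cref{theorem:decalage_of_evenification_of_bhatt_lurie_tate_is_the_tate_of_postnikov} combined with Antieau's page-shift comparison \cite[Theorem 4.13]{antieau:decalage}, which the paper simply cites. Your explicit reindexing $E^{r}_{-k,n}(\Dec Y)\simeq E^{r+1}_{n-2k,k}(Y)$, with $r\mapsto 2r-1$, $k=t$, $n=2s+2t$, is consistent with the convention in \cref{lemma:homotopy_of_decalage_of_evenification} and yields exactly the stated bidegrees.
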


\begin{proof}
This is a combination of \cref{remark:spectral_sequence_of_evenification} and Antieau's comparison \cite[{Theorem 4.13}]{antieau:decalage}. 
\end{proof}

\begin{corollary}
\label{corollary:decalaged_cellular_tate_is_double_of_decalaged_postnikov_tate}
If $X$ is a spectrum with an $S^{1}$-action, there is a natural equivalence of filtered spectra 
\[
d_{!} \Dec(F_{T}^{\ast}(X^{tS^{1}})) \simeq \Dec((\tau_{\geq \ast} X)^{tS^{1}}). 
\]
In other words, the single-speed d\'{e}calage of the cellular Tate filtration is equivalent to the double-speed d\'{e}calage of the homotopy Tate filtration. 
\end{corollary}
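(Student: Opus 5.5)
We deduce the statement from \cref{theorem:decalage_of_evenification_of_bhatt_lurie_tate_is_the_tate_of_postnikov} and \cref{proposition:decalage_and_doubling_interaction}, with no new input. Write $Y \colonequals F_{T}^{\ast}(X^{tS^{1}})$ and apply $\Dec$ to both sides of the equivalence in \cref{theorem:decalage_of_evenification_of_bhatt_lurie_tate_is_the_tate_of_postnikov}(2). This gives a natural equivalence
\[
\Dec(\Dec(d_{!} Y)) \simeq \Dec((\tau_{\geq \ast} X)^{tS^{1}}).
\]
By \cref{proposition:decalage_and_doubling_interaction}, the left-hand side is naturally equivalent to $d_{!}\Dec(Y)$. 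Composing the two equivalences yields the claim.

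The only thing to verify is naturality in $X$. Both equivalences must be natural, not merely objectwise.

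\textbf{Equivalence of \cref{proposition:decalage_and_doubling_interaction}.} This is natural in the filtered spectrum. Both sides are computed as colimits of Beilinson connective covers, and the Beilinson truncations are functorial, so the identifications assemble into a natural transformation.

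\textbf{Equivalence of \cref{theorem:decalage_of_evenification_of_bhatt_lurie_tate_is_the_tate_of_postnikov}(2).} We make this natural in $X$ as follows. Part (1) of that theorem identifies, functorially in $X$,
\[
\tau^{B}_{\geq a}\, d_{!}Y \simeq d_{!}F_{T}^{\ast}\bigl((\tau_{\geq a}X)^{tS^{1}}\bigr).
\]
The $a$-th term of the d\'ecalage is the colimit of the left-hand side. The colimit of the right-hand side is $(\tau_{\geq a}X)^{tS^{1}}$, via \cref{recollection:properties_of_the_bt_tate_filtration}(1), where the uniqueness statement guarantees naturality. Both sides are functorial in $a$, since $\tau_{\geq a+1}X \to \tau_{\geq a}X$ is compatible with the Beilinson covers. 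Hence the identification assembles into an equivalence of filtered objects, natural in $X$.

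The main point to be careful about is exactly this naturality in the comparison $\Dec(d_{!}Y)\simeq (\tau_{\geq\ast}X)^{tS^{1}}$. Once it is in place, the corollary is a formal two-line composite. Its interpretation, that single-speed d\'ecalage of the cellular filtration equals double-speed d\'ecalage of the homotopy filtration, then follows by reading the equivalence through the formula for $d_{!}$ in \cref{recollection:evenification_functor_on_filtered_spectra}.
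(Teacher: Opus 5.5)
Your proposal is correct and is the paper's argument: the paper proves the corollary by the same two-step composite, applying $\Dec$ to \cref{theorem:decalage_of_evenification_of_bhatt_lurie_tate_is_the_tate_of_postnikov}(2) and then rewriting $\Dec(\Dec(d_{!}Y))$ as $d_{!}\Dec(Y)$ via \cref{proposition:decalage_and_doubling_interaction}. Your naturality remarks only make explicit what the paper leaves implicit, using functoriality of the Beilinson covers and the uniqueness clause in \cref{recollection:properties_of_the_bt_tate_filtration}(1).
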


\begin{proof}
This is a combination of \cref{theorem:decalage_of_evenification_of_bhatt_lurie_tate_is_the_tate_of_postnikov} and \cref{proposition:decalage_and_doubling_interaction}. 
\end{proof}

\section{The synthetic Tate filtration} 
\label{section:synthetic_tate_filtration}

In this section, we discuss a synthetic variant of the cellular Tate filtration which previously appeared in the work of Antieau-Riggenbach \cite[{Proof of Lemma 2.75}]{antieau2024cyclotomic}, and we calculate its colimit and associated graded. The input for this construction is a synthetic spectrum equipped with a compatible action of the circle; that is, a module over the Antieau-Riggenbach-Raksit even circle $\evencircle$. 

\begin{notation} The \emph{synthetic sphere} is the filtered ring spectrum 
\[
\synsphere \colonequals \fil_{\pev}^{\ast}(\thesphere) \in \CAlg(\Fil \spectra), 
\]
the perfect even filtration of the sphere. We refer to $\synsphere$-modules as \emph{synthetic spectra}\footnote{In the terminology of \cite{pstrkagowski2023synthetic}, $\synspectra$ denotes \emph{even, $\MU$-based} synthetic spectra.} and denote their $\infty$-category by 
\[
\synspectra \colonequals \Mod_{\synsphere}(\Fil \spectra). 
\]
The \emph{even circle} is the filtered ring spectrum given by 
\[
\evencircle \colonequals \fil^{\ast}_{\pev}(\thesphere[S^{1}]) \in \CAlg(\synspectra),
\]
the perfect even filtration of the suspension spectrum of the circle. 
\end{notation}

\begin{remark}[{Even or perfect even?}]
Since we will apply the even filtration functor to modules, it will be convenient to work with the perfect even filtration of \cite{pstrkagowski2025perfect}, rather than the Hahn-Raksit-Wilson even filtration of \cite{hahnmotivic}. For the two algebras considered above, $\thesphere$ and $\thesphere[S^{1}]$, the two coincide by \cite[{Theorem 7.5}]{pstrkagowski2025perfect}, so that our notation above agrees with \cite{antieau2024cyclotomic}. 
\end{remark}

\begin{example}
\label{example:hrk_filtered_hochschild_homology_as_a_module_over_the_even_circle}
By \cite[{Corollary 2.44}]{antieau2024cyclotomic}, we have
\[
\fil_{\ev}^{\ast}(\ZZ) \otimes_{\synsphere} \evencircle \simeq  \tau_{\geq 2 \ast}(\ZZ) \otimes_{\synsphere} \evencircle \simeq \tau_{\geq \ast} (\ZZ[S^{1}]), 
\]
so that $\evencircle$ is a lift of Raksit's $\ZZ$-linear filtered circle $\tau_{\geq \ast}(\ZZ[S^{1}])$ to the synthetic sphere. In particular, if $A \rightarrow B$ is a morphism of derived rings, then Raksit's $\mathrm{HKR}$-filtered Hochschild homology $\HH_{\fil}(B/A)$ of \cite[{Definition 6.2.1}]{raksit:2020} is a $\evencircle$-module by restriction of scalars. 
\end{example}
\begin{notation}[{Filtration shift}]
If $X$ is a filtered spectrum, we write $X(k)$ for the filtered spectrum obtained by filtration shift; that is, for each $q \in \ZZ$ we have 
\[
(X(k))_{q} \colonequals X_{q-k}. 
\]
This should not be confused with the categorical suspension, which we denote by $X[1] \colonequals \Sigma X$. 
\end{notation}

\begin{recollection}[{Bialgebra structure and self-duality}]
By \cite[{Corollary 2.44}]{antieau2024cyclotomic}, the bialgebra structure on $\thesphere[S^{1}]$ induces a bialgebra structure on $\evencircle$ in $\synsphere$-modules. In particular, we have an augmentation map $\evencircle \rightarrow \synsphere$ which makes the synthetic sphere into a module over the even circle. 
\end{recollection}

\begin{recollection}[{Self-duality}]
\label{recollection:dual_of_the_even_circle}
The bialgebra $\evencircle$ is self-dual up to a shift. More precisely, by \cite[{Lemma 2.47}]{antieau2024cyclotomic}, we have an equivalence of $\evencircle$-modules 
\[
\evencircle^{\vee} \simeq \evencircle[-1](-1),
\]
where $\evencircle^{\vee} \colonequals \underline{\map}_{\synsphere}(\evencircle, \synsphere)$ is the monoidal dual. 
\end{recollection}

\begin{recollection}[Filtered orbits, fixed points and the Tate construction]
An $\evencircle$-module $X$ can be thought of as a filtered spectrum equipped with an action of the circle compatible with the filtration. One defines the filtered fixed points and orbits by the formulas 
\[
X_{\evencircle} \colonequals \synsphere \otimes_{\evencircle} X, \qquad
X^{\evencircle} \colonequals \underline{\map}_{\evencircle}(\synsphere, X),
\]
where on the right-hand side we have the internal $\synsphere$-module mapping object. By \cite[{Construction 2.58}]{antieau2024cyclotomic}, there is a norm map 
\[
\mathrm{Nm}_{X} \colon (X_{\evencircle})(1)[1] \rightarrow X^{\evencircle} 
\]
whose cofibre we denote by $X^{t \evencircle} \colonequals \mathrm{cofib}(\mathrm{Nm}_{X})$ and call the \emph{filtered Tate construction}. Note that the shift appearing in the source comes from the identification of \cref{recollection:dual_of_the_even_circle}. 
\end{recollection}

We now explain how to equip the filtered Tate construction with an additional auxiliary filtration, a filtered variant of the cellular Tate filtration. 

\begin{recollection}
\label{recollection:equivariant_perfect_even_filtration}
By \cite[{Theorem 3.37}]{pstrkagowski2025perfect}, the module perfect even filtration can be assembled into a functor 
\[
\fil_{\pev, \thesphere[S^{1}]}(-) \colon \Mod_{\thesphere[S^{1}]}(\spectra) \rightarrow \Mod_{\evencircle}(\synspectra). 
\]
We explain how its values can be calculated in practice. By a combination of \cite[{Corollary 2.36}]{antieau2024cyclotomic} and \cite[{Proposition 2.2.20}]{hahnmotivic}, the composite 
\[
\thesphere[S^{1}] \rightarrow \thesphere \rightarrow \MU
\]
of the augmentation and the unit map of complex bordism is faithfully even flat. It follows from \cite[{Theorem 6.26}]{pstrkagowski2025perfect} that for any $X \in \spectra^{\mathrm{B}S^{1}}$ there is a map of filtered spectra
\[
\fil_{\pev, \thesphere[S^{1}]}(X) \rightarrow \mathrm{Tot}(\tau_{\geq 2 \ast}(\MU^{\otimes_{\thesphere[S^{1}]} \bullet} \otimes_{\thesphere[S^{1}]} X)) 
\]
which identifies the target with the completion of the source. If $X$ is bounded below, which is the only case of interest for us, \cite[Theorem 8.3]{pstrkagowski2025perfect} shows that the perfect even filtration is already complete, and thus this map is an equivalence. The $\evencircle$-module structure comes from the identification 
\[
\evencircle \simeq \fil^{\ast}_{\pev}(\thesphere[S^{1}]) \simeq \mathrm{Tot}(\tau_{\geq 2 \ast}(\MU^{\otimes_{\thesphere[S^{1}]} \bullet}))
\]
\end{recollection}

\begin{notation}
\label{notation:filtered_synthetic_spectrum_inducing_the_equivariant_even_filtration}
We write 
\[
S^{\ast}_{\ev} \colonequals \fil_{\pev, \thesphere[S^{1}]}(S^{\ast}) \in \Fil(\Mod_{\evencircle}(\synspectra))
\]
for the image of the filtered $\thesphere[S^{1}]$-module of \cref{notation:equivariant_spheres_giving_the_tate_filtration} under the functor of \cref{recollection:equivariant_perfect_even_filtration}.
\end{notation}

\begin{definition}
\label{definition:tate_filtration_in_synthetic_spectra}
Let $X$ be an $\evencircle$-module. The associated \emph{synthetic Tate filtration} is the filtered synthetic spectrum
\[
F^{\ast}_{T}(X^{t \evencircle}) \colonequals (S^{\ast}_{\ev} \otimes_{\synsphere} X)^{h \evencircle}. 
\]
\end{definition}

\begin{warning}
Beware that our notation is potentially abusive: $F_{T}^{\ast}(X^{t \evencircle})$ is really a functor of $X$ rather than $X^{t \evencircle}$. We chose this convention to stay compatible with \cite[{\S 6.1}]{bhatt-lurie:apc}. 
\end{warning}

Note that, unlike for the ordinary variant, we do not show in this paper that the formation of the synthetic Tate filtration can be made $\mathbf{E}_{2}$-lax monoidal. Instead, we make the following:

\begin{conjecture}
\label{conjecture:monoidality_of_the_synthetic_tate_filtration}
We conjecture that: 
\begin{enumerate}
    \item the functor 
    \[
    \fil_{\pev, \thesphere[S^{1}]}(-) \colon \Mod_{\thesphere[S^{1}]}(\spectra) \rightarrow \Mod_{\evencircle}(\synspectra), 
    \]
    can be made lax symmetric monoidal, when we consider both sides with the symmetric monoidal structures coming from the bialgebra structure on, respectively, $\thesphere[S^{1}]$ and $\evencircle$, 
    \item the synthetic Tate filtration of \cref{definition:tate_filtration_in_synthetic_spectra} can be promoted to a lax $\mathbf{E}_{2}$-monoidal functor
    \[
    \Mod_{\evencircle}(\synspectra) \rightarrow \Fil(\synspectra).
    \] 
\end{enumerate}
\end{conjecture}

\begin{remark}
Note that the second part of \cref{conjecture:monoidality_of_the_synthetic_tate_filtration} should follow from the first, since Raksit's filtered fixed points are lax symmetric monoidal. 
\end{remark}

In the rest of this section, we calculate the associated graded and the colimit of the synthetic Tate filtration. 

\begin{proposition}
\label{proposition:associated_graded_of_the_synthetic_tate_filtration}
Let $X$ be a $\evencircle$-module. Then the associated graded of the synthetic Tate filtration is given by 
\[
\gr^{n}_{T}(X^{t \evencircle}) \simeq X[-2n](-n). 
\]
\end{proposition}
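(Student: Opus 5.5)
Since $\gr^{n}$, $S^{\ast}_{\ev}\otimes_{\synsphere}(-)$ and $(-)^{h\evencircle} = \underline{\map}_{\evencircle}(\synsphere,-)$ are all exact and commute with each other (fixed points preserve cofibres), we get
\[
\gr^{n}_{T}(X^{t\evencircle}) \simeq (\gr^{n}(S^{\ast}_{\ev})\otimes_{\synsphere} X)^{h\evencircle}.
\]
The plan is first to identify $\gr^{n}(S^{\ast}_{\ev})$ as an $\evencircle$-module, and then to compute the fixed points.

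\textbf{Step 1.} In $\Fil(\spectra^{hS^{1}})$, $\gr^{n}(S^{\ast})$ is the cofibre of $\thesphere^{-(n+1)\mathbf{C}}\to \thesphere^{-(n)\mathbf{C}}$. This is $\thesphere^{-(n+1)\mathbf{C}}\otimes \mathrm{cofib}(\alpha_{0})$, and $\mathrm{cofib}(\alpha_{0})\simeq \Sigma\Sigma^{\infty}_{+}S(\mathbf{C}) \simeq \thesphere[S^{1}][1]$ is the free module. Twisting a free module by a representation sphere untwists it, so
\[
\gr^{n}(S^{\ast}) \simeq \thesphere[S^{1}][-2n-1]
\]
as $\thesphere[S^{1}]$-modules. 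Note that $\fil_{\pev,\thesphere[S^{1}]}$ is exact on cofibre sequences of bounded below modules that are even-split. I will check, via the formula of \cref{recollection:equivariant_perfect_even_filtration}, that after base change to $\MU$ the relevant sequence is a split sequence of even modules up to shift (everything is free over $\MU_{\ast}$). This gives
\[
\gr^{n}(S^{\ast}_{\ev}) \simeq \fil_{\pev,\thesphere[S^{1}]}(\thesphere[S^{1}][-2n-1]).
\]
For a free module on an odd shift, the perfect even filtration picks up a filtration shift: $\fil_{\pev}(M[2k+1]) \simeq \fil_{\pev}(M)[2k+1](k+1)$, or possibly $(k)$. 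I must pin down this convention carefully using $\tau_{\geq 2\ast}$. The result should be
\[
\gr^{n}(S^{\ast}_{\ev})\simeq \evencircle[-2n-1](-n-1).
\]

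\textbf{Step 2.} For an $\evencircle$-module $X$, the shearing equivalence gives $\evencircle\otimes_{\synsphere} X\simeq \evencircle\otimes_{\synsphere}X^{\mathrm{triv}}$. Hence
\[
(\evencircle\otimes X)^{h\evencircle} \simeq \underline{\map}_{\synsphere}(\evencircle^{\vee},\ldots)\ \text{or equivalently}\ \evencircle^{h\evencircle}\otimes X.
\]
By \cref{recollection:dual_of_the_even_circle}, $\underline{\map}_{\evencircle}(\synsphere,\evencircle)\simeq \synsphere\otimes_{\evencircle}\evencircle^{\vee\vee}$, which is $\synsphere[1](1)$ up to the stated shift. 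Tensoring with $\evencircle$ is finite, so this commutes with the fixed points. Combining, the associated graded is
\[
X[-2n-1+1](-n-1+1)=X[-2n](-n),
\]
as claimed.

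\textbf{Main obstacle.} The main obstacle is Step 1: justifying that the perfect even filtration carries the cofibre sequence $S^{n+1}\to S^{n}\to \gr^{n}$ to a cofibre sequence, and getting the filtration-shift bookkeeping right. For exactness I would argue by the cobar formula: each $\MU^{\otimes\bullet}\otimes_{\thesphere[S^{1}]}\thesphere^{-(q)\mathbf{C}}$ is even (a Thom twist of an even module by a complex bundle), so the connecting map lands in even degrees and $\tau_{\geq 2\ast}$ is exact termwise. Totalization then preserves the fibre sequence. The shift is then fixed by comparing with \cref{proposition:associated_graded_of_the_synthetic_tate_filtration} in the case $n=0$ via the norm sequence.
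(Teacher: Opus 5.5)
Your outline follows the paper's: commute $\gr^{n}$ past fixed points, show that $\fil_{\pev,\thesphere[S^{1}]}$ preserves the cell cofibre sequence by an evenness check after base change to $\MU$, identify $\gr^{n}(S^{\ast}_{\ev})$ with a shifted $\evencircle$, and compute fixed points via self-duality. The gap is the filtration twist, which is exactly what the statement asserts. You leave the odd-shift rule open as ``$(k+1)$, or possibly $(k)$''. You then propose to settle it by comparing with this very proposition at $n=0$, which is circular. With the other choice you would end up with $X[-2n](-n+1)$, so this cannot be left open.

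It is a short computation. Suppose $\MU^{\otimes_{\thesphere[S^{1}]}\bullet}\otimes_{\thesphere[S^{1}]} M$ is levelwise even. Then the same complex for $M[1]$ has homotopy only in odd degrees, so $\tau_{\geq 2j}$ of it is $(\tau_{\geq 2j}(\MU^{\otimes_{\thesphere[S^{1}]}\bullet}\otimes_{\thesphere[S^{1}]} M))[1]$. Hence $\fil_{\pev}(M[1])\simeq \fil_{\pev}(M)[1]$ with no twist, while $\fil_{\pev}(M[2])\simeq\fil_{\pev}(M)[2](1)$. So $M[2k+1]$ picks up $(k)$, and you get $\evencircle[-2n-1](-n-1)$.

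The paper avoids odd shifts altogether. It applies $\fil_{\pev,\thesphere[S^{1}]}$ to the \emph{fibre} sequence $\Sigma^{-2n-2}\thesphere[S^{1}]\to\thesphere^{-(n+1)\mathbf{C}}\to\thesphere^{-(n)\mathbf{C}}$, all of whose terms are even after base change to $\MU$. On $\MU_{\ast}$ the first map is the bottom-cell inclusion $\MU_{\ast}(\Sigma^{-2n-2}\thesphere)\to\MU_{\ast}(\Sigma^{-2n-2}\mathbf{CP}^{\infty})$, which is injective. This gives $\gr^{n}(S^{\ast}_{\ev})\simeq\evencircle[-2n-2](-n-1)[1]\simeq\evencircle^{\vee}[-2n](-n)$ with no ambiguity.

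This also repairs your exactness argument. Evenness of the representation-sphere terms alone does not make $\tau_{\geq 2\ast}$ exact. You also need the fibre term $\MU^{\otimes_{\thesphere[S^{1}]}\bullet}\otimes_{\thesphere[S^{1}]}\thesphere[S^{1}][-2n-2]$ to be even, equivalently injectivity on $\MU_{\ast}$. That is what splits the long exact sequences into short exact ones.

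Step 2 reaches the right answer, but the stated reason is not valid. $(-)^{h\evencircle}$ does not preserve colimits, so it does not commute with $-\otimes_{\synsphere}X$ for general $X$, and finiteness of $\evencircle$ is not by itself what rescues this. What works is coinduction. By \cref{recollection:dual_of_the_even_circle} and dualizability, $\evencircle\otimes_{\synsphere}X^{\mathrm{triv}}\simeq(\evencircle^{\vee}\otimes_{\synsphere}X)[1](1)\simeq\underline{\map}_{\synsphere}(\evencircle,X)[1](1)$. By adjunction, $\underline{\map}_{\evencircle}(\synsphere,\underline{\map}_{\synsphere}(\evencircle,X))\simeq X$, which gives $X[1](1)$ directly. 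The case $X=\synsphere$ is the correct form of your formula for $\underline{\map}_{\evencircle}(\synsphere,\evencircle)$; as written, the right-hand side $\synsphere\otimes_{\evencircle}\evencircle^{\vee\vee}$ is just $\synsphere$. With these two repairs, your final bookkeeping $X[1](1)[-2n-1](-n-1)\simeq X[-2n](-n)$ is correct.
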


\begin{proof}
We claim that for each $q \in \ZZ$, the fibre sequence
\[
\Sigma^{-2q-2} \thesphere[S^{1}] \rightarrow \thesphere^{-(q+1)\mathbf{C}} \rightarrow \thesphere^{-(q)\mathbf{C}}
\]
is preserved by the functor $\fil^{\ast}_{\pev, \thesphere[S^{1}]}(-)$ of \cref{recollection:equivariant_perfect_even_filtration}. As $\thesphere[S^{1}] \rightarrow \MU$ is faithfully even flat by \cite[{Corollary 2.36}]{antieau2024cyclotomic}, it is enough to check that the base-change 
\[
\MU \otimes_{\thesphere[S^{1}]} \Sigma^{-2q-2} \thesphere[S^{1}] \rightarrow \MU \otimes_{\thesphere[S^{1}]} \thesphere^{-(q+1)\mathbf{C}} \rightarrow \MU \otimes_{\thesphere[S^{1}]} \thesphere^{-(q)\mathbf{C}}
\]
is short exact on homotopy groups. On homotopy, the first map can be identified with the morphism
\[
\MU_{*}(\Sigma^{-2q-2} \thesphere) \rightarrow \MU_{*}(\Sigma^{-2q-2} \mathbf{CP}^{\infty}) 
\]
induced by the inclusion of a bottom cell, which is injective as needed. We deduce that the associated graded of the filtered synthetic spectrum of \cref{notation:filtered_synthetic_spectrum_inducing_the_equivariant_even_filtration} can be calculated as 
\[
\gr^{q}(S_{\ev}) \simeq (\fil^{\ast}_{\pev, \thesphere[S^{1}]}(\Sigma^{-2q-2} \thesphere [S^{1}]))[1] \simeq (\evencircle[-2q-2](-q-1))[1] \simeq \evencircle^{\vee}[-2q](-q),
\]
where the last identification uses \cref{recollection:dual_of_the_even_circle}. The statement about the associated graded of the synthetic Tate filtration follows, since 
\[
\gr^{q}_{T}(X^{t \evencircle}) \simeq (\gr^{q}(S_{\ev}) \otimes_{\synsphere} X)^{\evencircle} \simeq (\evencircle^{\vee}[-2q](-q) \otimes_{\synsphere} X)^{\evencircle} \simeq X[-2q](-q). 
\]
\end{proof}

\begin{lemma}
\label{lemma:colimit_of_the_synthetic_tate_filtration_annihilates_induced_modules}
Let $X \colonequals \evencircle \otimes_{\synsphere} Y$ be an induced $\evencircle$-module. Then 
\[
\varinjlim F^{q}_{T}(X^{t \evencircle}) = 0, 
\]
where the colimit is taken as $q \to -\infty$. 
\end{lemma}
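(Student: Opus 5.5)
We compute $F^{q}_{T}(X^{t\evencircle})$ directly for induced $X$ and show that the transition maps become null, or rather that the colimit is computed by an explicitly contractible system. First, for $X = \evencircle \otimes_{\synsphere} Y$, the untwisting (shearing) equivalence for the bialgebra $\evencircle$ gives $S^{q}_{\ev} \otimes_{\synsphere} \evencircle \otimes_{\synsphere} Y \simeq \evencircle \otimes_{\synsphere} (S^{q}_{\ev} \otimes_{\synsphere} Y)$. Here the second factor carries the trivial action and only its underlying synthetic spectrum matters. Then
\[
F^{q}_{T}(X^{t\evencircle}) \simeq \underline{\map}_{\evencircle}(\synsphere, \evencircle \otimes_{\synsphere} Z_{q}), \qquad Z_{q} \colonequals \underline{S}^{q}_{\ev} \otimes_{\synsphere} Y,
\]
where $\underline{S}^{q}_{\ev}$ denotes the underlying synthetic spectrum. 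Using the self-duality of \cref{recollection:dual_of_the_even_circle}, fixed points of a coinduced module are the underlying object, so $(\evencircle\otimes Z)^{\evencircle} \simeq Z[1](1)$. Hence $F^{q}_{T}(X^{t\evencircle}) \simeq Z_{q}[1](1)$, naturally in $q$.

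It therefore suffices to show that $\varinjlim_{q\to-\infty} \underline{S}^{q}_{\ev} = 0$ as a synthetic spectrum, since tensoring with $Y$ and shifting commute with colimits. Here I would use the description of \cref{recollection:equivariant_perfect_even_filtration}, or more simply the fibre sequences established in the proof of \cref{proposition:associated_graded_of_the_synthetic_tate_filtration}. For a plain underlying computation, restriction along $\synsphere \to \evencircle$, the augmentation, commutes with the perfect even filtration of the $S^{1}$-equivariant spheres. Underlying, $\thesphere^{-(q)\mathbf{C}}$ is $\Sigma^{-2q}\thesphere$, and the maps $\alpha_{0}$ are null: $\thesphere\to\thesphere^{\mathbf{C}}$ is nonequivariantly the inclusion of two points into $S^{2}$, collapsing to zero. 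I expect the corresponding fact synthetically: the map $\underline{S}^{q+1}_{\ev} \to \underline{S}^{q}_{\ev}$ should be identified with $\fil_{\pev}$ of the null map $\Sigma^{-2q-2}\thesphere\to\Sigma^{-2q}\thesphere$. By functoriality of $\fil_{\pev}$ on underlying spectra it is then null, so a sequential colimit of null maps vanishes.

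The main obstacle is justifying that the underlying synthetic spectrum of $\fil_{\pev,\thesphere[S^{1}]}(M)$ is $\fil_{\pev}$ of the underlying spectrum of $M$, compatibly with maps, at least for bounded below $M$. I would try to verify this through the cobar formula of \cref{recollection:equivariant_perfect_even_filtration}. Both sides are totalizations of $\tau_{\geq 2\ast}$ of $\MU$-based resolutions, and $\MU\otimes_{\thesphere[S^{1}]} M$ versus $\MU\otimes M$ differ by the even flat extension $\MU\otimes_{\thesphere[S^{1}]}\MU$. If this turns out awkward, I would fall back on the fibre sequences from the proof of \cref{proposition:associated_graded_of_the_synthetic_tate_filtration}. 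The map $\underline{S}^{q+1}_{\ev}\to\underline{S}^{q}_{\ev}$ has fibre the underlying object of $\evencircle[-2q-2](-q-1)$. Underlying, $\evencircle \simeq \synsphere\oplus\synsphere[1](1)$ by \cite[Corollary 2.44]{antieau2024cyclotomic}. Together with the identifications $\underline{S}^{q}_{\ev}\simeq\synsphere[-2q](-q)$ obtained inductively, this splitting would exhibit the transition map as zero.
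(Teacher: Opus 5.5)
Your reduction is the same as the paper's. Shearing together with the self-duality $\evencircle^{\vee}\simeq\evencircle[-1](-1)$ turns the claim into $\varinjlim_{q\to-\infty}\underline{S}^{q}_{\ev}\simeq 0$. The gap is in how you then prove this vanishing.

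\textbf{The main route rests on a false claim.} You assume that the underlying synthetic spectrum of $\fil_{\pev,\thesphere[S^1]}(M)$ is naturally $\fil_{\pev}(\underline{M})$ for bounded below $M$. This fails already for $M=\thesphere[S^1]$. One side is $\evencircle\simeq\synsphere\oplus\synsphere[1](1)$. The other is $\fil_{\pev}(\thesphere\oplus\Sigma\thesphere)\simeq\synsphere\oplus\synsphere[1]$, because $\tau_{\geq 2n}(\Sigma E)\simeq\Sigma\tau_{\geq 2n}E$ for even $E$. So the degree-one class has weight one in $\evencircle$ but weight zero in $\fil_{\pev}(\Sigma\thesphere)$; this is exactly what the twist in the self-duality and in the norm map records.

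This is not a side issue. The modules $\Sigma^{-2q-2}\thesphere[S^1]$ are precisely the fibres of your transition maps. Your cobar heuristic (``differ by an even flat extension'') cannot tell the good case from the bad one. Comparing the two resolutions only works when both $\MU\otimes\underline{M}$ and $\MU\otimes_{\thesphere[S^1]}M$ are even. That holds for the $S^{q}$, but it needs its own argument. Functoriality of $\fil_{\pev,\thesphere[S^1]}$ alone cannot produce the null-homotopy either, since the maps $S^{q+1}\to S^{q}$ are not null $S^1$-equivariantly.

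\textbf{The fallback is incomplete as written.} It can be completed, but you omit the actual inputs:
\begin{enumerate}
\item the base case $\underline{S}^{0}_{\ev}\simeq\synsphere$;
\item the fact that the unit summand of $\evencircle[-2q-2](-q-1)$ maps by an equivalence onto $\underline{S}^{q+1}_{\ev}\simeq\synsphere[-2q-2](-q-1)$, which you can check after realization;
\item a reason the transition map vanishes. For instance, maps $\synsphere[-2q-2](-q-1)\to\synsphere[-2q](-q)$ are classified by $\pi_{-2}(\fil^{-1}_{\pev}\thesphere)\simeq\pi_{-2}\thesphere=0$.
\end{enumerate}

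\textbf{The missing idea.} The paper avoids all of this. You already observed that $\alpha_{0}$ is non-equivariantly null. Since colimits of spectra with $S^1$-action are computed underlying, this gives $\varinjlim_{q}S^{q}\simeq\thesphere^{(\infty)\mathbf{C}}\simeq 0$ already as $\thesphere[S^1]$-modules. The functor $\fil_{\pev,\thesphere[S^1]}$ commutes with filtered colimits \cite[Lemma 2.24]{pstrkagowski2025perfect}, and the forgetful functor to $\synspectra$ preserves colimits. Hence $\varinjlim_{q}\underline{S}^{q}_{\ev}\simeq\fil_{\pev,\thesphere[S^1]}(0)\simeq 0$ at once. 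In short, commute $\fil_{\pev}$ past the colimit instead of pushing null-homotopies through it.
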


\begin{proof}
By \cref{recollection:dual_of_the_even_circle}, up to a shift we can identify $X$ with $Y \otimes_{\synsphere} \evencircle^{\vee}$. We then have 
\[
\varinjlim F^{\ast}_{T}((Y \otimes_{\synsphere} \evencircle^{\vee})^{t \evencircle}) \simeq \varinjlim S^{\ast}_{\ev} \otimes_{\synsphere} Y.
\]
The right-hand side has a vanishing colimit since
\[
\varinjlim S^{\ast}_{\ev} \simeq \fil_{\pev, \thesphere[S^{1}]}(\thesphere^{-(\ast)\mathbf{C}}) \simeq \fil_{\pev, \thesphere[S^{1}]}(\thesphere^{(\infty)\mathbf{C}}) \simeq 0 
\]
and $\thesphere^{(\infty)\mathbf{C}} \simeq 0$, where we use \cite[{Lemma 2.24}]{pstrkagowski2025perfect} to commute the perfect even filtration past the filtered colimit. 
\end{proof}

\begin{proposition}
\label{proposition:colimit_of_the_synthetic_tate_filtration}
There is a unique natural transformation $\psi$ of functors $\Mod_{\evencircle}(\synspectra) \rightarrow \synspectra$ making the diagram 
\[\begin{tikzcd}
	{F^{0}_{T}(X^{t \evencircle}) \simeq X^{\evencircle}} && {\varinjlim F^{\ast}_{T}(X^{t \evencircle})} \\
	& {X^{t \evencircle}} & %\bullet
	\arrow[from=1-1, to=1-3]
	\arrow["{\mathrm{can}}", from=1-1, to=2-2]
	\arrow["\psi"', from=2-2, to=1-3]
\end{tikzcd}
\]
commute. Moreover, this natural transformation is an equivalence. 
\end{proposition}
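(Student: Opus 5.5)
The plan is to mimic the classical characterization of the Tate construction (the analogue of \cite[{Theorem I.4.1}]{nikolaus-scholze}): the canonical map $\mathrm{can} \colon X^{\evencircle} \to X^{t\evencircle}$ is the initial natural transformation from $(-)^{\evencircle}$ to an exact functor that annihilates induced $\evencircle$-modules. Concretely, $X^{t\evencircle}$ is the cofibre of the norm $(X_{\evencircle})(1)[1] \to X^{\evencircle}$, and the source of the norm is built from $X$ via colimits of induced modules. I will establish this universal property and then check that $G(X) \colonequals \varinjlim F^{\ast}_{T}(X^{t\evencircle})$, together with the transition map $X^{\evencircle} \simeq F^{0}_{T} \to G(X)$, satisfies its hypotheses.

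First, $G$ is exact: $S^{\ast}_{\ev} \otimes_{\synsphere} -$ is exact, $(-)^{\evencircle}$ is exact as a limit-type construction in a stable setting, and filtered colimits are exact. Second, $G$ vanishes on induced modules $\evencircle \otimes_{\synsphere} Y$ by \cref{lemma:colimit_of_the_synthetic_tate_filtration_annihilates_induced_modules}. Third, I claim that for any such $G$, the composite $(X_{\evencircle})(1)[1] \xrightarrow{\mathrm{Nm}} X^{\evencircle} \to G(X)$ is canonically null. Since $\mathrm{Nm}$ is natural and its source is a (geometric-realization) colimit of terms of the form $\synsphere \otimes_{\synsphere}(\evencircle^{\otimes n} \otimes X)$ via the bar construction, it suffices to know that on induced modules the norm is an equivalence $(\evencircle \otimes Y)_{\evencircle}(1)[1] \simeq (\evencircle\otimes Y)^{\evencircle}$, using the self-duality $\evencircle^{\vee} \simeq \evencircle[-1](-1)$ of \cref{recollection:dual_of_the_even_circle}. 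The map $X^{\evencircle} \to G(X)$ therefore factors through the cofibre, giving $\psi$. For uniqueness, the space of factorizations is a torsor under maps $(X_{\evencircle})(1)[1] \to G(X)$ natural in $X$; following Nikolaus--Scholze, the functor $X \mapsto (X_{\evencircle})(1)[1]$ is the left Kan extension of its restriction to induced modules (i.e.\ it preserves colimits and is determined by its values on the free module $\evencircle$), and natural transformations out of it into $G$ are then computed on induced modules, where $G$ vanishes. So the space of such factorizations is contractible.

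It remains to show that $\psi$ is an equivalence. Both $X^{t\evencircle}$ and $G(X)$ are exact functors vanishing on induced modules. Filtering the source of the norm by skeleta of the bar construction, the fibre of $\mathrm{can} \colon X^{\evencircle} \to X^{t\evencircle}$ is assembled from induced modules, and similarly the fibre of $X^{\evencircle} = F^{0}_{T} \to G(X)$ is $\varinjlim_{q \to -\infty} \mathrm{fib}(F^{0}_{T} \to F^{q}_{T})$, whose associated graded pieces are, by the proof of \cref{proposition:associated_graded_of_the_synthetic_tate_filtration}, of the form $(\evencircle^{\vee}[-2q](-q) \otimes X)^{\evencircle} \simeq X[-2q](-q)$. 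These pieces are the homotopy fixed points of induced modules. The cleanest route is to compare the two fibres directly: identify $\varinjlim_{q<0}$ of these fibres with $(X_{\evencircle})(1)[1]$ by recognizing $\mathrm{fib}(\synsphere \to \varinjlim_q S^{q}_{\ev})$-type objects as $\evencircle^{\vee}$-cellular, i.e.\ running the argument of Bals \cite[{Proposition B.6}]{BalsPeriodicCyclicHomology} at the synthetic level. Equivalently, I would show that the fibre of $S^{0}_{\ev} \to \varinjlim S^{\ast}_{\ev}$, namely $\synsphere$ itself since the colimit vanishes, lies in the localizing subcategory generated by induced modules, so that $F^{0}_{T} \to G$ has fibre equal to the fixed points of an induced-type colimit, matching the norm.

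I expect this last identification to be the main obstacle. The issue is that homotopy fixed points do not commute with the colimit over $q$, so one must carefully use that $\fil_{\pev,\thesphere[S^{1}]}$ commutes with filtered colimits and preserves the relevant cofibre sequences, as was checked via $\MU$ in the proof of \cref{proposition:associated_graded_of_the_synthetic_tate_filtration}. With that in hand, the remaining steps are formal.
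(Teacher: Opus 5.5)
Your construction of $\psi$ is fine and matches the paper. The paper simply invokes the universal property of the filtered Tate construction \cite[Proposition 2.4.10]{raksit:2020} together with \cref{lemma:colimit_of_the_synthetic_tate_filtration_annihilates_induced_modules}, so reproving that universal property Nikolaus--Scholze style is optional.

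The gap is in showing that $\psi$ is an equivalence, exactly at the step you flag as the main obstacle, and your proposed remedies do not resolve it.
\begin{itemize}
\item Showing that $\synsphere$ lies in the localizing subcategory generated by induced modules carries no information. By the bar resolution, every $\evencircle$-module lies there, including modules whose norm map is far from an equivalence.
\item ``The fixed points of an induced-type colimit'' is the wrong object. Since $\synsphere\otimes_{\synsphere}X\simeq\varinjlim_{q}\mathrm{fib}(S^{0}_{\ev}\to S^{q}_{\ev})\otimes_{\synsphere}X$, its fixed points are just $X^{\evencircle}$. The fibre of $X^{\evencircle}\to\varinjlim F^{\ast}_{T}(X^{t\evencircle})$ is instead the colimit of fixed points, $\varinjlim_{q}(\mathrm{fib}(S^{0}_{\ev}\to S^{q}_{\ev})\otimes_{\synsphere}X)^{\evencircle}$.
\item Commuting $\fil_{\pev,\thesphere[S^{1}]}$ with filtered colimits was already used to get $\varinjlim S^{\ast}_{\ev}=0$. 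It does nothing about $(-)^{\evencircle}$ failing to commute with $\varinjlim_{q}$.
\end{itemize}

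The missing idea is to trade fixed points for orbits at each finite stage, where the norm is invertible, and then use that orbits commute with colimits. The paper applies the norm cofibre sequence levelwise:
\[
(S^{q}_{\ev}\otimes_{\synsphere}X)_{\evencircle}(1)[1]\xrightarrow{\mathrm{Nm}}(S^{q}_{\ev}\otimes_{\synsphere}X)^{\evencircle}\to(S^{q}_{\ev}\otimes_{\synsphere}X)^{t\evencircle}.
\]
\begin{itemize}
\item By the proof of \cref{proposition:associated_graded_of_the_synthetic_tate_filtration}, the cofibres of the transition maps are $\evencircle^{\vee}[-2q](-q)\otimes_{\synsphere}X$, which are induced up to shift. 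So the Tate term is constant in $q$, equal to $X^{t\evencircle}$.
\item The orbit term commutes with the colimit, so its colimit is $\big((\varinjlim S^{\ast}_{\ev})\otimes_{\synsphere}X\big)_{\evencircle}(1)[1]\simeq 0$.
\item Hence $\varinjlim F^{\ast}_{T}(X^{t\evencircle})\simeq X^{t\evencircle}$, compatibly with the maps from $X^{\evencircle}$. By uniqueness, this equivalence is inverse to $\psi$.
\end{itemize}

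In your fibre formulation the repair reads as follows. Each $\mathrm{fib}(S^{0}_{\ev}\to S^{q}_{\ev})\otimes_{\synsphere}X$ is a \emph{finite} extension of induced modules, so it lies in the thick (not merely localizing) subcategory they generate, and its norm is an equivalence. Therefore
\[
\varinjlim_{q}\big(\mathrm{fib}(S^{0}_{\ev}\to S^{q}_{\ev})\otimes_{\synsphere}X\big)^{\evencircle}\simeq\varinjlim_{q}\big(\mathrm{fib}(S^{0}_{\ev}\to S^{q}_{\ev})\otimes_{\synsphere}X\big)_{\evencircle}(1)[1]\simeq(X_{\evencircle})(1)[1],
\]
where the colimit can now be moved inside the orbits. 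This matches the fibre of $\mathrm{can}$, as you wanted.
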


\begin{proof}
The first part follows immediately from the universal property of the Tate construction of \cite[{Proposition 2.4.10}]{raksit:2020} and \cref{lemma:colimit_of_the_synthetic_tate_filtration_annihilates_induced_modules}. For the second part, we observe that by the proof of \cref{proposition:associated_graded_of_the_synthetic_tate_filtration} each of the maps 
\[
S^{q+1}_{\ev} \otimes_{\synsphere} X \rightarrow S^{q}_{\ev} \otimes_{\synsphere} X 
\]
becomes an equivalence after applying $(-)^{t \evencircle}$. Thus, we have a compatible system of identifications
\[
(S^{q}_{\ev} \otimes_{\synsphere} X)^{t \evencircle} \simeq (S^{0}_{\ev} \otimes_{\synsphere} X)^{t \evencircle} \simeq X^{t \evencircle}.
\]
It follows that the cofibre of $\psi$ can be identified with 
\[
\varinjlim \mathrm{cofib}(X^{t \evencircle} \rightarrow (S^{\ast}_{\ev} \otimes_{\synsphere} X)^{\evencircle}) \simeq \varinjlim \Sigma (S^{\ast}_{\ev} \otimes_{\synsphere} X)_{\evencircle} \simeq \Sigma((\varinjlim S^{\ast}_{\ev}) \otimes_{\synsphere} X)_{\evencircle}
\] 
which vanishes since $\varinjlim S^{\ast}_{\ev} = 0$. 
\end{proof}

\section{The three HKR filtrations on $\HC^{-}$ and $\HP$}\label{section:HKR_comparison}

In this section, we apply the synthetic Tate filtration of 
\cref{definition:tate_filtration_in_synthetic_spectra} and the equivalence of \cref{theorem:decalage_of_evenification_of_bhatt_lurie_tate_is_the_tate_of_postnikov} to compare three variants of the $\HKR$-filtration on periodic cyclic and negative cyclic homology appearing in the literature. The three filtrations in question are due to Antieau, Raksit and Bhatt--Lurie. We first recall their constructions. 

\begin{notation}
In this section, $k$ denotes a classical commutative ring and $\dcat(k) \simeq \Mod_{k}(\spectra)$ its derived $\infty$-category. 
\end{notation}

\begin{recollection}
To an animated $k$-algebra $A$ one can associate its Hochschild homology spectrum $\HH(A / k) \in \dcat(k)$, as well as negative cyclic and periodic homology
\[
\HC^{-}(A / k) \colonequals \HH(A / k)^{hS^{1}}, \textnormal{ } \HP(A / k) \colonequals \HH(A / k)^{tS^{1}}. 
\]
Hochschild homology comes equipped with a Hochschild--Kostant--Rosenberg filtration, and this allows one to construct filtered spectra 
\[
F^{\ast}_{\HKR} \HC^{-}(A / k), \textnormal{ } F^{\ast}_{\HKR} \HP(A / k),
\]
with associated graded given by 
\[
\gr_{\HKR}^{\ast} \HC^{-}(A / k) \simeq (\mathrm{dR}_{A/k}^{\wedge, \geq \ast})[2 \ast], \textnormal{ } \gr_{\HKR}^{\ast} \HP(A / k) \simeq (\mathrm{dR}_{A/k}^{\wedge})[2 \ast], 
\]
shifts of (truncated, in the case of $\HC^{-}$) Hodge-completed derived de Rham cohomology. 
\end{recollection}

At least three constructions of the above $\HKR$ filtration appear in the literature.

\begin{recollection}[{Antieau, \cite{antieau2019periodic}}]
\label{recollection:antieaus_hkr}
To a finitely generated polynomial $k$-algebra $P \in \Poly_{k}$ we associate the bifiltered object\footnote{In \cite{antieau2019periodic}, the cellular Tate filtration is denoted by $F^{\ast}_{\mathrm{CW}}$ rather than the $F^{\ast}_{T}$ used here.}
\[
F^{r}_{H} F^{s}_{T} \HP(P/k) \colonequals F^{s}_{T}((\tau_{\geq r} \THH(P/k))^{tS^{1}}) 
\]
given by the cellular Tate filtration applied to the homotopy filtration of $\THH(P/k)$. This object is bicomplete, as are all of its Beilinson connective covers with respect to the homotopy filtration. We denote the \emph{double-speed} tower of Beilinson connective covers as 
\[
F^{\ast}_{BH} F^{\ast}_{H} F^{\ast}_{T} \HP(P/k) \colonequals \tau_{\geq 2\ast}^{B} (F^{\ast}_{H} F^{\ast}_{T} \HP(P/k))
\] 
Here, the subscript $BH$ is to remind the reader that the Beilinson tower is taken with respect to the homotopy filtration. The above association admits a unique extension to a sifted colimit-preserving functor
\[
F^{\ast}_{BH} F^{\ast}_{H} F^{\ast}_{T} \HP(-/k) \colon \CAlg_{k}^{\an} \rightarrow \Fun(\ZZ^{\op}, \compbiFil(\dcat(k)))
\] 
defined on all animated $k$-algebras. If $A$ is an animated $k$-algebra, Antieau's $\HKR$-filtration on the associated $\HC^{-}$ and $\HP$ is given by 
\begin{align*}
F^{\ast}_{\AntHKR} \HC^{-}(A/k) &\colonequals F^{\ast}_{BH} F^{0}_{H} \HC^-(A/k), \\
F^{\ast}_{\AntHKR} \HP(A/k) &\colonequals F^{\ast}_{BH} F^{0}_{H} \HP(A/k),
\end{align*}
where the second definition is shorthand for first taking the colimit along the Tate-filtration in $\Fun(\Zbf^\op, \Fil^c(\Dc(k)))$ and then passing to the $F^0_H$-part. Note that our definition of $F^{\ast}_{\AntHKR} \HP(A/k)$ looks different from Antieau's definition in \cite[Proof of Corollary~4.12]{antieau2019periodic}. The equivalence of these two definitions is explained in \cref{rem:ben_HP_HKR}.
\end{recollection}

\begin{remark}[Role of the Tate filtration in Antieau's construction]\label{rem:ben_tate}
Let us explain the role of the cellular Tate filtration in Antieau's construction. For a polynomial $k$-algebra $P$, we have 
\[
\gr^r_H \HC^-(P/k) = (\gr^r_H \HH(P/k))^{hS^{1}} \simeq (\Omega^r_{P/k}[r])^{hS^1} \simeq \bigoplus_{i=0}^\infty \Omega^r_{P/k}[r-2i]
\]
where the last equality uses the fact that $\Omega^r_{P/k}$ is a discrete module. This functor left Kan extends to $A \mapsto \bigoplus_{i=0}^\infty \bigwedge^r L_{A/k}[r-2i]$ which for a general animated $k$-algebra $A$ does not coincide with 
\[
\gr^r_H \HC^-(A/k) = (\gr^r_H \HH(A/k))^{hS^{1}} \simeq
    \Big(\bigwedge^r L_{A/k}[r]\Big)^{hS^1} \simeq \prod_{i=0}^\infty \Big(\bigwedge^r L_{A/k}[r-2i]\Big).
\]
In other words, $F_H^* \HC^-(A/k)$ is not left Kan extended from polynomial algebras as a complete single-filtered complex. The Tate filtration fixes this by breaking the infinite direct sum down into individual summands, essentially completing the infinite sum of cotangent complexes into an infinite product. For this reason, $F_H^* F_T^* \HC^-(A/k)$ \emph{is} left Kan extended from polynomial algebras as a bicomplete bifiltered complex, as observed in \cite[{Remark 4.4}]{antieau2019periodic}. 

We remark that this Tate completion is not necessary for $\HC(A/k) \colonequals \HH(A/k)_{hS^1}$, as in this case 
\[
\gr^r_H \HC(P/k) \simeq (\Omega^r_{P/k}[r])_{hS^1} \simeq \bigoplus_{i=0}^\infty \Omega^r_{P/k}[r+2i]
\]
\emph{does} left Kan extend to 
\[
A \mapsto \bigoplus_{i=0}^\infty \Big(\bigwedge^r L_{A/k} \Big)[r+2i] \simeq \Big(\bigwedge^r L_{A/k}[r]\Big)_{hS^1} \simeq \gr^r_H \HC(A/k).
\]
Thus, $F^*_H \HC(A/k) = F^*_H |F^*_T \HC(A/k)|$ is left Kan extended as a complete single-filtered complex from polynomial algebras. 
\end{remark}

\begin{remark}[On Antieau's $\HKR$-filtration on $\HP$]\label{rem:ben_HP_HKR}
Antieau defines his HKR-filtration on $\HP$ of polynomial $k$-algebras $P$ as the cofibre
\begin{equation*}
    F_{\AntHKR'}^* F_H^* \HP(P/k) := \mathrm{cofib}\left(\tau^B_{\geq 2\ast  - 1} (F_H^* \HC (P/k)[1]) \to \tau^B_{\geq 2\ast} (F_H^* \HC^-(P/k))\right),\footnote{Antieau suppresses the filtration-shift from his notation. One sees that our $\tau^B_{\geq 2\ast  - 1} (F_H^* \HC (P/k)[1])$ coincides with Antieau's Beilinson filtration on $\HC$ by comparing the graded pieces.}
\end{equation*}
and then left Kan extends it to all animated $k$-algebras. The resulting filtration coincides with our $F_{\AntHKR}^* \HP(A/k)$. First, we observe that
\[
F_H^* \HC^-(P/k) \to  F_H^* \HP(P/k) \to  F_H^* \HC(P/k)[2]
\]
is a cofibre sequence of filtered objects that induces split cofibre sequences on graded pieces. Thus, taking Beilinson connective covers sends it to a cofibre sequence. Combining this observation with the formula
\[
\tau^B_{\geq 2n} (F_H^* \HC(P/k)[2]) = (\tau^B_{\geq 2n-1}F_H^* \HC(P/k)[1])[1],
\]
we obtain a cofibre sequence
\[
\tau^B_{\geq 2\ast - 1} (F_H^* \HC(P/k)[1]) \to  \tau^B_{\geq 2\ast} (F_H^* \HC^-(P/k)) \to  \tau^B_{\geq 2\ast} (F_H^* \HP(P/k)).
\]
Passing to $F^0_H$, the rightmost entry in this cofibre sequence becomes our $F_{\AntHKR}^* \HP(P/k)$, proving the claim on polynomial $k$-algebras. As both filtrations are left Kan extended, they coincide on all animated $k$-algebras.
\end{remark}

\begin{recollection}[{Bhatt--Lurie, \cite{bhatt-lurie:apc}}]
\label{recollection:bhatt_lurie_hkr_filtration}
Given $P \in \Poly_{k}$ we write
\[
F^{\ast}_{BT} F^{\ast}_{T} \HP(P / k) \colonequals \tau^{B}_{\geq \ast}(F_{T}^{\ast}(\THH(P / k)^{tS^{1}})),  
\]
for the Beilinson--Whitehead tower of the cellular Tate filtration on $\THH(P / k)^{tS^{1}}$. Here, the subscript $BT$ is supposed to remind the reader of the fact that the Beilinson covers are taken with respect to the cellular Tate filtration. 

If $A$ is a general animated $k$-algebra, we define $ F^{\ast}_{BT} F^{\ast}_{T} \HP(A / k)$ as the value on $A$ of the unique extension of the above association to a sifted colimit-preserving functor 
\[
F^{\ast}_{BT} F^{\ast}_{T} \HP(- / k) \colon \CAlg^{\an}_{k} \rightarrow \Fun(\ZZ^{\op}, \compFil(\dcat(k)))
\]
valued in objects which are complete with respect to the Tate filtration.
\end{recollection}

\begin{warning}
Observe that both Antieau's and Bhatt--Lurie's definitions are quite similar, involving a left Kan extension of a suitable d\'{e}calage operation. Beware that these d\'{e}calage are, however, taken with respect to two different filtrations, namely the homotopy (i.e. Whitehead) filtration and the cellular Tate filtration! The comparison between the two we prove in this section says that these two a priori different operations yield the same result. 
\end{warning}

\begin{recollection}[{Raksit}]
\label{recollection:raksit_hkr_filtration}
 In \cite[{Definition 6.2.5}]{raksit:2020}, Raksit characterizes a certain module $\HH_{\mathrm{fil}}(A / k) \in \Mod_{\evencircle}\Fil(\dcat(k))$ over the filtered circle by a universal property. To keep our notation consistent we will denote this module by 
\[
F^{\ast}_{\RakHKR} \HH(A / k) \colonequals \HH_{\mathrm{fil}}(A / k) \in \Mod_{\evencircle}\Fil(\dcat(k)).
\]
Raksit's $\HKR$-filtration on $\HC^{-}$ and $\HP$ are defined as 
\begin{align*}
    F^{\ast}_{\RakHKR} \HC^{-}(A / k) &\colonequals (F^{\ast}_{\RakHKR} \HH(A / k))^{\evencircle}, \\
F^{\ast}_{\RakHKR} \HP(A / k) &\colonequals (F^{\ast}_{\RakHKR} \HH(A / k))^{t\evencircle}, 
\end{align*}
the filtered variant of fixed points and Tate construction of \cite[{\S 6.3}]{raksit:2020}.
\end{recollection}

The following is the main result of this section. 

\begin{theorem}
\label{theorem:antieau_bhattlurie_raksit_hkr_filtrations_on_hcminus_are_equivalent}
Let $A$ be an animated $k$-algebra. Then there are equivalences, natural in $A$,
\[
F^{\ast}_{\AntHKR} F_T^* \HP(A / k) \simeq F^{\ast}_{\BLHKR} F_T^*\HP(A / k) \simeq F^*_T F^{\ast}_{\RakHKR} \HP(A / k) \in \mathrm{BiFil}(\dcat(k)), 
\]
where the second equivalence exchanges the filtrations. In particular, the three HKR filtrations on $\HC^-$ and $\HP$ coincide.
\end{theorem}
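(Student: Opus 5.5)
The plan is to reduce everything to polynomial $k$-algebras $P \in \Poly_{k}$ and then left Kan extend. All three bifiltered objects are sifted-colimit-preserving functors of $A$ valued in Tate-complete bifiltered complexes. For Antieau and Bhatt--Lurie this holds by construction. For Raksit's we check it as follows. $F^{\ast}_{\RakHKR}\HH(-/k)$ is left Kan extended from polynomials, since its graded pieces are $\bigwedge^{r} L_{A/k}[r]$. Applying the synthetic Tate filtration of \cref{definition:tate_filtration_in_synthetic_spectra} to it gives, by \cref{proposition:associated_graded_of_the_synthetic_tate_filtration}, Tate-graded pieces $F^{\ast}_{\RakHKR}\HH(A/k)[-2n](-n)$. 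These commute with sifted colimits, so after Tate-completion the whole bifiltered object does. Together with \cref{proposition:colimit_of_the_synthetic_tate_filtration}, which identifies the Tate colimit with $F^{\ast}_{\RakHKR}\HP$, this justifies the last clause once the bifiltered equivalence is known. The $\HC^{-}$ statement follows by restricting to Tate degree $0$. Hence it suffices to produce equivalences, natural in $P$, between the three bifiltrations on $\HP(P/k)$.

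For polynomial $P$, we first compare Antieau and Bhatt--Lurie. Write $X = \HH(P/k)$, whose homotopy filtration $\tau_{\geq r}X$ has graded pieces $\Omega^{r}_{P/k}[r]$; these are discrete up to shift. By \cref{theorem:decalage_of_evenification_of_bhatt_lurie_tate_is_the_tate_of_postnikov}(1), the $\tau_{\geq r}$-pieces of $d_{!}F^{\ast}_{T}(X^{tS^{1}})$ are exactly the Beilinson $r$-connective covers. So, in the bifiltered refinement (cf.\ \cref{remark:bifiltered_refinement_of_the_interaction_between_decalage_and_doubling}), the homotopy filtration $F_{H}$ is identified with the Beilinson filtration of the Tate-doubled object. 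Taking the double-speed Beilinson covers $\tau^{B}_{\geq 2\ast}$ with respect to $F_{H}$ and passing to $F^{0}_{H}$, we should then get $\tau^{B}_{\geq \ast}$ of $F^{\ast}_{T}$ itself. The mechanism is the same one as in \cref{proposition:decalage_and_doubling_interaction}: double-speed Beilinson truncation of an even object undoes $d_{!}$. Concretely, we compute both sides on graded pieces $\gr^{s}_{T}\gr^{r}_{H}$. Both are $\Omega^{r}_{P/k}[r-2s]$ placed in HKR weight $r$, since Beilinson truncation in the Tate direction selects exactly the terms with $r \geq \ast$. The formal step is to upgrade this to an equivalence of bifiltered objects using the universal property of connective covers. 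The input here is that every object involved is bicomplete, and that each map is a $\tau^{B}_{\geq}$-cover in one direction and coconnective in the other.

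Next we compare Bhatt--Lurie and Raksit on polynomials. We apply $\fil_{\ev}$ (or $\tau_{\geq 2\ast}$, as in \cref{example:hrk_filtered_hochschild_homology_as_a_module_over_the_even_circle}) base-changed to $\tau_{\geq\ast}(\ZZ[S^{1}])$. For $P$ polynomial, $F^{\ast}_{\RakHKR}\HH(P/k) \simeq \tau_{\geq \ast}\HH(P/k)$ as modules over the filtered circle, by Raksit's universal property. Then $S^{q}_{\ev}\otimes F^{\ast}_{\RakHKR}\HH$ is computed levelwise via \cref{recollection:equivariant_perfect_even_filtration}: the sequences $\Sigma^{-2q-2}\thesphere[S^{1}]\to \thesphere^{-(q+1)\mathbf{C}}\to\thesphere^{-(q)\mathbf{C}}$ are preserved. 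This identifies $F^{q}_{T}F^{r}_{\RakHKR}\HP(P/k)$ with $\tau_{\geq r}$-type truncations of $F^{q}_{T}(X^{tS^{1}})$, namely with $\tau^{B}_{\geq r}$ of the cellular Tate filtration in Tate degree $q$, with the two filtration indices swapped. Again we verify this on bigraded pieces, where both sides are $\Omega^{r}_{P/k}[r-2q]$ (with weight shift $(-q)$), and glue using completeness and connectivity.

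The main obstacle is the middle step. We must construct actual comparison maps of bifiltered objects, natural in $P$, rather than merely match bigraded pieces. I would construct each map by a universal property. On the Bhatt--Lurie side, a map out of a Beilinson connective cover exists as soon as the source is Beilinson-connective. On the Raksit side, we use the universal property of $\HH_{\fil}$ and of the filtered Tate construction (\cref{proposition:colimit_of_the_synthetic_tate_filtration}). Once the maps exist, checking that they are equivalences reduces to the graded computations above. Bicompleteness then finishes the polynomial case, and left Kan extension gives the general case.
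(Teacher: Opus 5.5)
Your polynomial-level comparisons are the paper's. For Antieau versus Bhatt--Lurie you use \cref{theorem:decalage_of_evenification_of_bhatt_lurie_tate_is_the_tate_of_postnikov} together with the mechanism of \cref{proposition:decalage_and_doubling_interaction}. For Raksit you identify $F^{\ast}_{T}F^{s}_{\RakHKR}\HP(P/k)$ with a Beilinson cover by looking at Tate-graded pieces, and you check sifted-colimit preservation on associated gradeds, as in \cref{lemma:synthetic_tate_filtered_raksits_hp_is_a_beilinson_tower_for_polynomial_rings} and \cref{proposition:bhattlurie_raksit_hkr_filtrations_on_hcminus_are_equivalent}. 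The gap is in your reduction to $\Poly_{k}$, at the sentence saying Antieau's filtration is a sifted-colimit-preserving functor into Tate-complete bifiltered objects ``by construction''. It is not. Antieau left Kan extends the trifiltered object $F^{\ast}_{BH}F^{\ast}_{H}F^{\ast}_{T}\HP(-/k)$ in $\Fun(\ZZ^{\op}, \compbiFil(\dcat(k)))$, i.e.\ among objects complete in the homotopy and Tate directions, and only afterwards extracts $F^{0}_{H}$. Colimits there are levelwise colimits followed by completion in the $H$-direction. Evaluation at $F^{0}_{H}$ does not commute with that completion, since completion replaces $F^{0}_{H}$ by $\varprojlim_{m}\mathrm{cofib}(F^{m}_{H}\to F^{0}_{H})$. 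Antieau's and Bhatt--Lurie's functors are therefore left Kan extended in different categories. An equivalence on polynomial algebras does not by itself propagate to general $A$.

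The paper closes this in three steps. First, by \cite[Lemma~4.8]{antieau:decalage} the extension in $(H,T)$-complete objects agrees with the one in tricomplete objects, and by \cite[Lemma~4.9]{antieau:decalage} it agrees with the one in objects complete in the Tate and Beilinson-homotopy directions. In that last category, evaluation at $F^{0}_{H}$ does commute with colimits. Second, \cref{remark:bifiltered_refinement_of_the_interaction_between_decalage_and_doubling} rewrites the result as a colimit of $\Dec_{\ast}(F^{\ast}_{T}(\HH(B/k)^{tS^{1}}))$ over $B\in(\Poly_{k})_{/A}$ in bicomplete objects. Third, \cite[Remark~6.3.5]{bhatt-lurie:apc} says this colimit is unchanged when computed in merely Tate-complete objects, which is exactly the Bhatt--Lurie construction. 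Some completeness bookkeeping of this kind is the real content of the first equivalence, and your proposal does not supply it. By contrast, the step you flag as the main obstacle is not one. The comparison maps come for free from the naturality of \cref{theorem:decalage_of_evenification_of_bhatt_lurie_tate_is_the_tate_of_postnikov} and \cref{proposition:decalage_and_doubling_interaction}, and from the universal property of the left Kan extension defining $F^{\ast}_{\BLHKR}$. Two smaller points. On Raksit's side you need $F^{\ast}_{T}F^{\ast}_{\RakHKR}\HP(A/k)$ to be already complete (\cref{lemma:tate_filtered_raksits_hp_is_bicomplete}); passing to its completion would change the object. Also, the universal property of $\tau^{B}_{\geq s}$ gives maps \emph{into} the cover from Beilinson $s$-connective sources, not maps out of it.
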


Note that the comparison of filtrations on $\HC^{-}$ is obtained by restricting to non-negative Tate-filtration degrees since $F^{0}_{T} \HP(A/k) \simeq \HC^{-}(A/k)$ in each case. We prove the two equivalences of \cref{theorem:antieau_bhattlurie_raksit_hkr_filtrations_on_hcminus_are_equivalent} separately, dealing with the first one in the lemma below and with the second one in \cref{proposition:bhattlurie_raksit_hkr_filtrations_on_hcminus_are_equivalent}. 

\begin{lemma}
We have $F^{\ast}_{\AntHKR} F^*_T \HP(A / k) \simeq F^{\ast}_{\BLHKR} F^*_T \HP(A/k)$. In particular, the HKR-filtrations of Antieau and Bhatt--Lurie on $\HP$ and $\HC^-$ coincide. 
\end{lemma}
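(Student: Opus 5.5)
Both bifiltered objects are defined by left Kan extension from $\Poly_{k}$, as sifted-colimit-preserving functors into Tate-complete filtered objects. So the plan is to build a natural equivalence on polynomial $k$-algebras $P$ and then extend it. Fix $P$ and set $X \colonequals \HH(P/k)$ with its $S^{1}$-action. By definition,
\[
F^{\ast}_{\BLHKR} F^{s}_{T} \HP(P/k) \simeq \tau^{B}_{\geq \ast}\big(F^{\ast}_{T}(X^{tS^{1}})\big)_{s}, \qquad F^{\ast}_{\AntHKR} F^{s}_{T} \HP(P/k) \simeq \big|\tau^{B}_{\geq 2\ast}\big(F^{s}_{T}((\tau_{\geq \ast} X)^{tS^{1}})\big)\big|_{H},
\]
where in the second formula the Beilinson tower is taken in the homotopy variable and $|-|_{H}$ denotes the colimit in the $H$-direction, i.e.\ the value $F^{0}_{H}$ after taking colimits.

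The key input is \cref{theorem:decalage_of_evenification_of_bhatt_lurie_tate_is_the_tate_of_postnikov}(2) together with its bifiltered refinement in the style of \cref{remark:bifiltered_refinement_of_the_interaction_between_decalage_and_doubling}. The argument of that theorem works one Tate degree $s$ at a time: apply it to the truncated filtered object $F^{\geq s}$, i.e.\ to $S^{\ast}$ restricted to degrees $\geq s$. This identifies
\[
F^{s}_{T}\big((\tau_{\geq a} X)^{tS^{1}}\big) \simeq \big|\tau^{B}_{\geq a}\, d_{!}F^{\geq s}_{T}(X^{tS^{1}})\big|,
\]
naturally in $(a,s)$. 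By \cref{lemma:behavior_of_d\'{e}calage_of_doubling_in_beilinson_t_structure}-type reasoning, the map $d_{!}(\tau^{B}_{\geq b} Y) \to d_{!} Y$ is a Beilinson $2b$-connective cover whenever $Y$ has graded pieces $Y[-2q]$ concentrated in even degrees, which holds for our $Y$. Granting this, taking the double-speed Beilinson tower in the $H$-direction and then the $H$-colimit should turn $\tau^{B}_{\geq 2n}$ of $\Dec(d_{!}F_{T})$ into $\Dec_{n}$ of the cellular Tate filtration. That is exactly \cref{proposition:decalage_and_doubling_interaction} applied bifiltered-wise, giving
\[
F^{n}_{\AntHKR}F^{s}_{T}\HP(P/k) \simeq \big|\tau^{B}_{\geq n} F^{\geq s}_{T}(X^{tS^{1}})\big| = F^{n}_{\BLHKR}F^{s}_{T}\HP(P/k),
\]
naturally in $P$. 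This is the computation $\Dec_{2a}(\Dec(d_{!}F^{s}X)) \simeq \Dec_{a}(F^{s}X)$ of \cref{remark:bifiltered_refinement_of_the_interaction_between_decalage_and_doubling}.

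The main obstacle is bookkeeping. We must check that Antieau's double-speed Beilinson tower, taken with respect to the $H$-filtration of the bifiltered object, agrees with the double-speed décalage in the statement of \cref{proposition:decalage_and_doubling_interaction}. Concretely, the $H$-filtration $(\tau_{\geq \ast}X)^{tS^{1}}$ in a fixed Tate degree should be identified with $\Dec(d_{!}F_{T}^{\geq s})$ as a filtered object, so that its Beilinson tower is the $\tau^{B}$ appearing in that proposition. This requires \cref{theorem:decalage_of_evenification_of_bhatt_lurie_tate_is_the_tate_of_postnikov} for the truncated Tate filtrations $F^{\geq s}$, whose proof goes through verbatim, since the graded pieces are still even shifts of $\tau_{\geq a}X$. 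We must also check that all identifications are natural in $s$. Completeness is not a concern: both sides are Tate-complete for polynomial $P$ by the bicompleteness noted in \cref{recollection:antieaus_hkr}.

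Finally, both functors $\CAlg^{\an}_{k} \to \Fun(\ZZ^{\op}, \compFil(\dcat(k)))$ are the unique sifted-colimit-preserving extensions of their restrictions to $\Poly_{k}$. The natural equivalence on $\Poly_{k}$ therefore extends to all animated $A$. Restricting to $s = 0$ gives the $\HC^{-}$ statement, and passing to the colimit in $s$ gives the $\HP$ statement.
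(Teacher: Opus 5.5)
Your polynomial step is essentially the paper's. You apply \cref{theorem:decalage_of_evenification_of_bhatt_lurie_tate_is_the_tate_of_postnikov} one Tate degree at a time to identify the homotopy filtration with $\Dec_{\ast}(d_{!}F^{\ast}_{T})$. You then take the double-speed Beilinson tower and invoke \cref{remark:bifiltered_refinement_of_the_interaction_between_decalage_and_doubling}. One intermediate sentence is false as written. The map $d_{!}(\tau^{B}_{\geq b}Y) \to d_{!}Y$ is not a Beilinson $2b$-connective cover in general, because $\gr^{2q}(d_{!}Y) \simeq \gr^{q}Y$ is only $(b-q)$-connective, not $(2b-2q)$-connective. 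The correct statement requires applying $\Dec$ first; that is the lemma preceding \cref{proposition:decalage_and_doubling_interaction}, and it is what you actually use.

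The genuine gap is the passage to general animated $A$. You handle it by asserting that both sides are, by definition, the sifted-colimit-preserving extensions into $\Fun(\ZZ^{\op}, \compFil(\dcat(k)))$, and that ``completeness is not a concern.'' That is not how Antieau's filtration is defined (\cref{recollection:antieaus_hkr}). There, one left Kan extends the trifiltered object $F^{\ast}_{BH}F^{\ast}_{H}F^{\ast}_{T}\HP$ into $\Fun(\ZZ^{\op}, \compbiFil(\dcat(k)))$, so colimits are completed in both the $H$- and $T$-directions. Only afterwards does one evaluate at $H$-degree $0$.

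Colimits of $H$-complete objects are $H$-completions of levelwise colimits, and $H$-completion can change the term in $H$-degree $0$. So there is no a priori reason for $A \mapsto F^{\ast}_{\AntHKR}F^{\ast}_{T}\HP(A/k)$ to preserve sifted colimits as a functor to Tate-complete objects; that is exactly what needs proof. On the other side, Bhatt--Lurie extend in objects complete only in the Tate direction, not in the Beilinson direction (\cref{recollection:bhatt_lurie_hkr_filtration}). So the two constructions use genuinely different completeness conventions, and agreement on $\Poly_{k}$ alone does not let you conclude.

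The paper closes this in three steps:
\begin{enumerate}
\item It uses \cite[Lemma~4.8]{antieau:decalage} to replace the $(H,T)$-complete extension by the tricomplete one. It then uses \cite[Lemma~4.9]{antieau:decalage} to replace that by the extension complete in the $T$- and $BH$-directions. There colimits are levelwise in $H$, so one may restrict to $F^{0}_{H}$ before taking the colimit over $(\Poly_{k})_{/A}$.
\item It rewrites that colimit via \cref{remark:bifiltered_refinement_of_the_interaction_between_decalage_and_doubling} as a colimit of $\Dec_{\ast}(F^{\ast}_{T})$ in bicomplete objects.
\item It invokes \cite[Remark~6.3.5]{bhatt-lurie:apc} to see that this colimit agrees with the one computed in merely Tate-complete objects, which is the Bhatt--Lurie construction.
\end{enumerate}
Without an argument of this kind, your final paragraph only establishes agreement on $\Poly_{k}$.
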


\begin{proof} 
First, suppose that $A \in \Poly_{k}$. In this case, \cref{theorem:decalage_of_evenification_of_bhatt_lurie_tate_is_the_tate_of_postnikov} gives an identification of bifiltered objects 
\[
d_{!} F^{\ast}_{H} F^{\ast}_{T} \HP(A/k) \simeq \Dec_{\ast}(d_{!} F_{T}^{\ast}(\HH(A/k)^{tS^{1}}))
\]
where the homotopy filtration on the left corresponds to the d\'{e}calage filtration on the right. Taking double-speed d\'{e}calage in this direction, we obtain an equivalence of trifiltered objects 
\[
F^{\ast}_{BH} F^{\ast}_{H} F^{\ast}_{T} \HP(A/k) \simeq \Dec_{2 \ast}(\Dec_{\ast}(d_{!} F^{\ast}_{T}(\HH(A/k)^{tS^{1}})))
\]
that proves the claim on polynomial algebras.

For general $A$, the left-hand side is defined as the unique sifted colimit-preserving extension when considered as a functor in the $\infty$-category of objects which are complete in the Tate and homotopy directions. By \cite[{Lemma~4.8}]{antieau:decalage}, this is the same as the unique extension when valued in tricomplete objects, which as a consequence of \cite[{Lemma~4.9}]{antieau:decalage} then coincides with the unique extension valued in objects which are complete in the Tate and Beilinson-homotopy directions. Working with that last extension and restricting to homotopy filtration zero, we deduce that for any animated $A$ we have 
\[
F^{\ast}_{BH} F^{0}_{H} F^{\ast}_{T} \HP(A/k) \simeq 
\varinjlim \Dec_{2 \ast}(\Dec_{0}(d_{!} F^{\ast}_{T}(\THH(B/k)^{tS^{1}}))), 
\]
where the colimit is taken over $B \in (\Poly_{k})_{/A}$ inside $\compbiFil(\dcat(k))$. Using the bifiltered refinement of \cref{proposition:decalage_and_doubling_interaction} given in \cref{remark:bifiltered_refinement_of_the_interaction_between_decalage_and_doubling}, 
we can rewrite the right-hand side as 
\[
\varinjlim \Dec_{2 \ast}(\Dec_{0}(d_{!} F^{\ast}_{T}(\THH(B/k)^{tS^{1}}))) \simeq \varinjlim \Dec_{\ast}(F^{\ast}_{T}(\THH(B/k)^{tS^{1}}))
\]
By \cite[{Remark 6.3.5}]{bhatt-lurie:apc}, this colimit gives the same result when calculated in objects which are complete only with respect to the Tate filtration. The latter colimit is exactly the one which appears in the construction of the Bhatt--Lurie filtration. Putting these together, we obtain the desired equivalence 
\[
F^{\ast}_{BH} F^{0}_{H} F^{\ast}_{T} \HP(A/k) \simeq F^{\ast}_{BT} F^{\ast}_{T} \HP(A/k), 
\]
finishing the proof.
%which when restricted to Tate degree zero yields $F^{\ast}_{\AntHKR} \HC^{-}(A/k) \simeq F^{\ast}_{\BLHKR} \HC^{-}(A/k)$, as needed. 
\end{proof}

We move on to the second equivalence. Since $F^{\ast}_{\BLHKR}$ is defined by left Kan extending a certain bifiltered object, to compare it to $F^{\ast}_{\RakHKR}$ we will equip the latter with an additional filtration, given by the synthetic Tate filtration of \S \ref{section:synthetic_tate_filtration}. 

\begin{notation}
\label{notation:tate_filtration_on_raksits_filtered_hp}
If $A$ is an animated ring, we write 
\begin{equation}
\label{equation:synthetic_tate_of_raksits_hkr_hh}
F_{T}^{\ast} F^{\ast}_{\RakHKR} \HP(A / k)
\end{equation}
for the synthetic Tate filtration of \cref{definition:tate_filtration_in_synthetic_spectra} applied to the $\evencircle$-module $F^{\ast}_{\RakHKR} \HH(A / k)$. Since
\[
| F^{\ast}_{\RakHKR} \HH(A/k) | \simeq \HH(A/k), 
\]
there is a canonical map
\[
| F^{\ast}_{T} F^{\ast}_{\RakHKR} \HP(A/k)| \rightarrow F_{T}^* \HP(A/k). 
\]
This map is an equivalence on the associated graded objects, and hence identifies the target with the completion of the source. 
\end{notation}

\begin{lemma}
\label{lemma:tate_filtered_raksits_hp_is_bicomplete}
For any animated $A$, $F^{\ast}_{T} F^{\ast}_{\RakHKR} \HP(A/k)$ is complete as a bifiltered object. 
\end{lemma}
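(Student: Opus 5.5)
We have to show that the bifiltered object $F^{\ast}_{T} F^{\ast}_{\RakHKR} \HP(A/k)$ is complete in both directions. Completeness of a bifiltered object (limits vanish along each index separately, equivalently along the diagonal) is detected by the associated graded in each direction. The plan is to reduce, in each direction, to completeness statements about Raksit's filtered Hochschild homology and about the associated graded pieces.

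\emph{Tate direction.} By \cref{proposition:associated_graded_of_the_synthetic_tate_filtration}, the $q$-th Tate graded piece is $F^{\ast}_{\RakHKR}\HH(A/k)[-2q](-q)$. Since $F^{q}_{T}$ for $q \geq 0$ is built from the fixed points $(S^{q}_{\ev}\otimes X)^{\evencircle}$, I would show directly that $\varprojlim_{q} F^{q}_{T} = 0$ for each fixed HKR degree $r$. For this I would use the filtration shift $(-q)$: in HKR degree $r$, the graded piece $\gr^{q}_{T}$ evaluates to $F^{r+q}_{\RakHKR}\HH(A/k)[-2q]$.

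\emph{Key estimate.} Raksit's filtration is connective in a strong sense: $F^{s}_{\RakHKR}\HH(A/k)$ is $s$-connective, being left Kan extended from polynomial algebras where $\gr^{s} = \Omega^{s}[s]$. Hence $F^{r+q}_{\RakHKR}\HH[-2q]$ is $(r-q)$-connective. This alone is not enough to force the limit to vanish. Instead, I would compare with the completed object: by \cref{notation:tate_filtration_on_raksits_filtered_hp}, $|F_{T}^{\ast}F^{\ast}_{\RakHKR}\HP|$ maps to $F_{T}^{\ast}\HP$ with equivalence on gradeds. The plan is to realize each term $F^{r}_{\RakHKR}$ of $F^{q}_{T}$ as $(S^{q}_{\ev}\otimes F^{\ast}_{\RakHKR}\HH)^{\evencircle}$ evaluated at $r$. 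This is a limit, namely a totalization computing $\underline{\map}_{\evencircle}$. Limits of complete objects are complete, so it suffices to know that $S^{q}_{\ev}\otimes F^{\ast}_{\RakHKR}\HH$ is complete in the Tate direction levelwise, i.e. that $\varprojlim_q$ of $S^{q}_{\ev}\otimes -$ vanishes. Since $S^{q}_{\ev}$ for $q\ge0$ is an iterated extension of $\evencircle^{\vee}$-shifts with weight and homotopical shifts growing in $q$ (as computed in the proof of \cref{proposition:associated_graded_of_the_synthetic_tate_filtration}), the fibre of the cone is roughly $(q)$-shifted in weight and $2q$ in degree. Combining this with the connectivity estimate, the limit vanishes by a Milnor $\lim^1$ argument on homotopy.

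\emph{HKR direction.} The HKR graded pieces of $F^{q}_{T}$ are $(S^{q}\otimes\gr^{r}_{\RakHKR}\HH)^{hS^{1}}$-type objects. Completeness in $r$ follows from completeness of Raksit's $F_{\RakHKR}\HH$ (the $s$-connectivity above gives $\varprojlim_s = 0$), preserved by tensoring with the finite object $S^{q}_{\ev}$ and by the limit $(-)^{\evencircle}$.

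The main obstacle I expect is the Tate direction. The filtration shift $(-q)$ mixes Tate and HKR degrees, so I must verify carefully that the connectivity gain (the $[-2q]$ shift against $(r+q)$-connectivity) actually goes to infinity uniformly, using the even filtration structure of $\evencircle^{\vee}$, rather than merely staying bounded.
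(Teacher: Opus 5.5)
Your reduction in the Tate direction is correct. Since $(-)^{\evencircle}=\underline{\map}_{\evencircle}(\synsphere,-)$ commutes with limits, it suffices to show $\varprojlim_{q} S^{q}_{\ev}\otimes_{\synsphere}X=0$ for $X=F^{\ast}_{\RakHKR}\HH(A/k)$. Your HKR-direction argument (completeness of Raksit's filtration, preserved by the shift $S^{q}_{\ev}\otimes_{\synsphere}-$ and by filtered fixed points) is essentially the paper's appeal to Raksit's Propositions 6.2.9 and 6.3.7.

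The gap is in how you propose to show that this limit vanishes. No connectivity or Milnor-$\varprojlim^{1}$ argument based on the size of the terms can work. In HKR degree $r$ the $q$-th term is $F^{r+q}_{\RakHKR}\HH(A/k)[-2q]$, which is only $(r-q)$-connective, and its homotopy need not die as $q\to\infty$. For instance, for $A=k\otimes^{L}_{k[x]}k$ one has $L_{A/k}\simeq A[1]$, hence $\gr^{s}_{\RakHKR}\HH(A/k)\simeq A[2s]$, and $\pi_{2r}$ of the $q$-th term is $k$ for all $q\gg 0$. Also, $S^{q}_{\ev}$ itself is not an iterated extension of shifts of $\evencircle^{\vee}$; only the cofibre $S^{0}_{\ev}/S^{q}_{\ev}$ is. So the ``growing shifts'' heuristic says nothing about $\varprojlim_q S^{q}_{\ev}\otimes X$. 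The uniformity worry you flag at the end is therefore fatal to the route you propose, not a technicality.

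The missing idea is that the limit vanishes because of the transition maps, not because the objects get small. After forgetting the $\evencircle$-module structure, $S^{q}_{\ev}$ is the invertible synthetic spectrum $\synsphere[-2q](-q)$. One checks this inductively along the fibre sequences in the proof of \cref{proposition:associated_graded_of_the_synthetic_tate_filtration}, using that $\evencircle\simeq\synsphere\oplus\synsphere[1](1)$ underlying. Any map $\synsphere[-2](-1)\to\synsphere$ of $\synsphere$-modules is a class in $\pi_{-2}(\fil^{-1}_{\ev}\thesphere)\cong\pi_{-2}\thesphere=0$. This is the synthetic shadow of the fact that $\alpha_{0}\colon\thesphere\to\thesphere^{\mathbf{C}}$ is non-equivariantly null.

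Hence the underlying tower $S^{q}_{\ev}\otimes_{\synsphere}X$ has null transition maps. It is therefore zero on homotopy, so $\varprojlim$ and $\varprojlim^{1}$ vanish and the limit is $0$. Commuting the limit past $(-)^{\evencircle}$ then gives completeness in the Tate direction, which is what the paper's one-line ``the synthetic Tate filtration is complete'' rests on.

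Two smaller corrections, neither of which you actually use:
\begin{itemize}
\item Completeness is not detected on associated graded pieces.
\item Vanishing of the diagonal limit is strictly weaker than completeness in each direction separately.
\end{itemize}
Drop those parenthetical claims.
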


\begin{proof}
For each $q \in \ZZ$, we have 
\[
F_{T}^{q} F^{\ast}_{\RakHKR} \HP(A / k) \simeq (\ZZ[-2q](-q)  \otimes_{\ZZ} F^{\ast}_{\RakHKR} \HH(A / k))^{h \evencircle}
\]
which is complete as a filtered object by a combination of \cite[{Proposition 6.2.9, Proposition 6.3.7}]{raksit:2020}. Since the synthetic Tate filtration is complete, the result follows. 
\end{proof}

\begin{lemma}
\label{lemma:synthetic_tate_filtered_raksits_hp_is_a_beilinson_tower_for_polynomial_rings}
Let $A \in \Poly_{k}$ be a finitely generated polynomial $k$-algebra. Then for every $s \in \ZZ$, the canonical map 
\[
F_{T}^{\ast} F_{\RakHKR}^{s} \HP(A / k) \rightarrow F_{T}^{\ast} \HP(A / k) 
\]
identifies the source with the Beilinson $s$-connective cover of the target. 
\end{lemma}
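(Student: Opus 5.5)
We verify the characterization of Beilinson connective covers directly. A map $Y \to Z$ of filtered objects exhibits $Y$ as $\tau^{B}_{\geq s} Z$ when two conditions hold. First, $Y$ is Beilinson $s$-connective, meaning $\gr^{q} Y$ is $(s-q)$-connective for all $q$. Second, the cofibre is Beilinson $(s-1)$-coconnective. It suffices to check the second condition in the form that $\gr^{q}$ of the cofibre is $(s-q-1)$-truncated and the cofibre is complete. Here the filtration is the Tate filtration $F^{\ast}_{T}$, with $s$ fixed and $q$ running over Tate degrees.

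We begin with the associated graded pieces. By \cref{proposition:associated_graded_of_the_synthetic_tate_filtration}, extended to the $\ZZ$-linear setting via \cref{example:hrk_filtered_hochschild_homology_as_a_module_over_the_even_circle}, and after evaluating the synthetic filtration at degree $s$, we get
\[
\gr^{q}_{T} F^{s}_{\RakHKR} \HP(A/k) \simeq \big(F^{s+q}_{\RakHKR}\HH(A/k)\big)[-2q].
\]
The twist $(-q)$ shifts the filtration index by $q$. On the other side, $\gr^{q}_{T} \HP(A/k) \simeq \HH(A/k)[-2q]$ by \cref{recollection:properties_of_the_bt_tate_filtration}, and the map between them is induced by $F^{s+q}_{\RakHKR}\HH \to \HH$.

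For $A$ polynomial, Raksit's filtration agrees with the Postnikov filtration, $F^{m}_{\RakHKR}\HH(A/k) \simeq \tau_{\geq m}\HH(A/k)$, because $\pi_{m}\HH(A/k) \simeq \Omega^{m}_{A/k}$ sits in degree $m$ (HKR, \cite[Proposition 6.2.9]{raksit:2020}). Hence
\[
\gr^{q}_{T} F^{s}_{\RakHKR}\HP \simeq (\tau_{\geq s+q}\HH)[-2q],
\]
which is $(s-q)$-connective. The graded cofibre $(\tau_{<s+q}\HH)[-2q]$ is $(s-q-1)$-truncated. So the graded conditions hold exactly.

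It remains to upgrade from graded pieces to filtered objects, and this is where completeness enters. Both $F^{\ast}_{T}F^{s}_{\RakHKR}\HP$ (by \cref{lemma:tate_filtered_raksits_hp_is_bicomplete}) and $F^{\ast}_{T}\HP(A/k)$ (levelwise homotopy fixed points, with $\gr$ given by shifts and $\lim_{q}$ vanishing because $\thesphere^{(\infty)\mathbf{C}}$-type terms vanish after fixed points) are Tate-complete. Hence so is the cofibre. For complete filtered objects, Beilinson connectivity and coconnectivity can be checked on associated graded pieces; the coconnective part uses $F^{q} = \lim_{n} F^{q}/F^{q+n}$, and limits preserve the relevant coconnectivity. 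The step I expect to need the most care is the identification of the graded pieces of the synthetic Tate filtration after base change from $\synsphere$ to $\ZZ$, that is, $\gr^{q}(S_{\ev})\otimes \evencircle$-module $\simeq$ shift and twist, together with matching the map on $\gr^{q}$ to the inclusion $\tau_{\geq s+q}\HH \to \HH$. I would handle this by naturality of the fibre sequence in the proof of \cref{proposition:associated_graded_of_the_synthetic_tate_filtration} in the module $X$, applied to the map of $\evencircle$-modules $F^{\ast}_{\RakHKR}\HH \to \HH$ (constant filtration).
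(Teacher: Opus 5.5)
Your proposal is correct and is essentially the paper's proof. You identify the map on $\gr^{q}_{T}$ with $(F^{s+q}_{\RakHKR}\HH(A/k))[-2q] \rightarrow \HH(A/k)[-2q]$ via \cref{proposition:associated_graded_of_the_synthetic_tate_filtration}, and then with the $(s-q)$-connective cover $(\tau_{\geq s+q}\HH(A/k))[-2q] \rightarrow \HH(A/k)[-2q]$ using the agreement of Raksit's filtration with the Postnikov filtration for polynomial algebras (the paper cites \cite[Remark 6.2.10]{raksit:2020} for this, not Proposition 6.2.9). The one addition is that you spell out the Tate-completeness of source and target needed to get Beilinson coconnectivity of the cofibre from its graded pieces, which the paper leaves implicit: for the source this is \cref{lemma:tate_filtered_raksits_hp_is_bicomplete}, and for the target $\varprojlim_{q} \HH(A/k) \otimes \thesphere^{-(q)\mathbf{C}} \simeq \map(\thesphere^{(\infty)\mathbf{C}}, \HH(A/k)) \simeq 0$.
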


\begin{proof}
We have to show that for each $q \in \ZZ$, the map 
\[
\gr_{T}^{q} F^{s}_{\RakHKR} \HP(A / k) \rightarrow \gr_{T}^{q} \HP(A / k) 
\]
is an $(s-q)$-connective cover of spectra. Using the description of the associated graded of the synthetic Tate filtration given in \cref{proposition:associated_graded_of_the_synthetic_tate_filtration}, we can identify this map with 
\[
(F^{s+q}_{\RakHKR} \THH(A / k))[-2q] \rightarrow \THH(A / k)[-2q].
\]
By \cite[{Remark 6.2.10}]{raksit:2020}, we can further rewrite it as 
\[
(\tau_{\geq s+q} \THH(A / k))[-2q] \rightarrow \THH(A / k)[-2q]
\]
which ends the argument. 
\end{proof}

\begin{proposition}
\label{proposition:bhattlurie_raksit_hkr_filtrations_on_hcminus_are_equivalent}
We have a filtration-exchanging equivalence of bifiltered objects 
\[
F^*_T F^{\ast}_{\RakHKR} \HP(A/k) \simeq F^{\ast}_{\BLHKR} F^*_T \HP(A/k).
\]
In particular, the HKR-filtrations of Raksit and Bhatt--Lurie on $\HP$ and $\HC^-$ coincide. 
\end{proposition}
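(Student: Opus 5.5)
The plan is to compare the two bifiltered objects first on polynomial algebras and then extend to all animated $k$-algebras by a uniqueness argument for sifted-colimit-preserving extensions. The first step handles $A \in \Poly_{k}$. Here \cref{lemma:synthetic_tate_filtered_raksits_hp_is_a_beilinson_tower_for_polynomial_rings} says that for each $s$ the canonical map $F^{\ast}_{T} F^{s}_{\RakHKR}\HP(A/k) \to F^{\ast}_{T}\HP(A/k)$ is the Beilinson $s$-connective cover. By the universal property of connective covers, these maps assemble into a natural equivalence of bifiltered objects
\[
F^{\ast}_{T} F^{\ast}_{\RakHKR}\HP(A/k) \simeq \tau^{B}_{\geq \ast}(F^{\ast}_{T}\HP(A/k)) = F^{\ast}_{\BLHKR}F^{\ast}_{T}\HP(A/k).
\]
The Raksit index becomes the Beilinson index, so this equivalence exchanges the two filtrations. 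To make it natural in $A$, and compatible with the maps as $s$ varies, I would use that the Beilinson truncation is a right adjoint: the canonical map of \cref{notation:tate_filtration_on_raksits_filtered_hp}, restricted in each $s$, factors uniquely through the connective cover. This yields a natural transformation of functors on $\Poly_{k}$, and it is an equivalence by the lemma.

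The second step extends this to animated $A$. By \cref{recollection:bhatt_lurie_hkr_filtration}, the right-hand side is the unique sifted-colimit-preserving extension from $\Poly_{k}$ into objects complete in the Tate direction. It therefore suffices to show the following for $A \mapsto F^{\ast}_{T}F^{\ast}_{\RakHKR}\HP(A/k)$: it lands in (bi)complete objects, which is \cref{lemma:tate_filtered_raksits_hp_is_bicomplete}, and it preserves sifted colimits when colimits are computed in complete bifiltered objects, i.e. as completed colimits. Then the Kan extension produces a comparison map which is an equivalence on $\Poly_{k}$ and hence everywhere.

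I expect the main obstacle to be this sifted colimit preservation. I would check it on associated graded pieces, which is enough for complete objects, because completion commutes with taking associated graded. By \cref{proposition:associated_graded_of_the_synthetic_tate_filtration}, the $q$-th Tate graded piece is $F^{\ast}_{\RakHKR}\HH(A/k)[-2q](-q)$. Its Raksit graded pieces are shifts of $\bigwedge^{j}L_{A/k}[j]$, which preserve sifted colimits in $A$; this is also how Raksit's construction is defined via left Kan extension. One subtle point is that Bhatt--Lurie complete only in the Tate direction, whereas I have bicompleteness. To reconcile these I would invoke \cite[{Remark 6.3.5}]{bhatt-lurie:apc}, as in the previous lemma, to show that the Tate-complete colimit already agrees with the bicomplete one. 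Finally, the $\HC^{-}$ statement follows by restricting to Tate degree $0$, and the $\HP$ statement follows by taking the colimit in the Tate direction.
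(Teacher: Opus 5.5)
Your proposal is correct and essentially matches the paper's proof: on $\Poly_k$ the Beilinson-cover lemma identifies the Raksit side with $\tau^{B}_{\geq \ast}$ of the cellular Tate filtration, and the left Kan extension defining $F^{\ast}_{\BLHKR}$ supplies the comparison map. That map is an equivalence because the Raksit side preserves sifted colimits, which is checked on associated graded pieces via \cref{proposition:associated_graded_of_the_synthetic_tate_filtration} and the HKR description of Raksit's graded pieces. You are also more explicit than the paper's proof about the Tate-complete versus bicomplete target (the paper uses Bhatt--Lurie's Remark 6.3.5 only in the preceding lemma), and appealing to that remark here is the right way to handle it.
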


\begin{proof}
Consider the functor
\[
F^{\ast}_{T} F^{\ast}_{\RakHKR} \HP(- / k) \colon \CAlg_{k}^{\an} \rightarrow \compbiFil(\dcat(k)). 
\]
of \cref{notation:tate_filtration_on_raksits_filtered_hp}, which lands in complete bifiltered spectra by \cref{lemma:tate_filtered_raksits_hp_is_bicomplete}. As a consequence of \cref{lemma:synthetic_tate_filtered_raksits_hp_is_a_beilinson_tower_for_polynomial_rings}, when restricted to the category $\Poly_{k}$ of finitely generated polynomial $k$-algebras, this functor agrees with the d\'{e}calage of the cellular Tate filtration. The construction of $F^{\ast}_{\BLHKR}$ as a left Kan extension thus provides a canonical natural transformation of functors 
\begin{equation}
\label{equation:nat_of_functors_in_bifiltered_spectra_in_ant_rak_comparison}
F^{r}_{BT} F^{s}_{T} \HP(- / k) \rightarrow F^{s}_{T} F^{r}_{\RakHKR} \HP(- / k)
\end{equation}
valued in complete bifiltered spectra. Note that here our use of $r, s \in \ZZ$ is to remind the reader that the external/internal directions get interchanged. 

We claim that (\ref{equation:nat_of_functors_in_bifiltered_spectra_in_ant_rak_comparison}) is an equivalence, for which it is enough to check that the target preserves sifted colimits. This can be verified after passing to the associated graded, where it is a consequence of the formula 
\[
\gr^{s}_{T} \gr^{r}_{\RakHKR} \HP(A / k) \simeq \gr^{r+s}_{\RakHKR} \HH(A / k)[-2s] \simeq (\Lambda^{r+s} L_{A/k})[-2s]
\]
which combines \cref{proposition:associated_graded_of_the_synthetic_tate_filtration} and \cite[{Theorem 6.2.6}]{raksit:2020}. 
\end{proof}

\bibliographystyle{alphamod}
\bibliography{references}

\end{document}

%% file: preamble.tex
\usepackage[T1]{fontenc}
\usepackage[utf8]{inputenc}
\usepackage{amsmath, amsthm, amsfonts, amssymb}

\usepackage{euscript} % Lurie-type Euler script
\usepackage[usenames, dvipsnames]{xcolor} % colourfultext
\usepackage{url}
\usepackage{nicefrac}
\usepackage[all]{xy}
\usepackage{placeins} % barriers for floats
\usepackage{musicography} % allows for flats and sharps

\usepackage{bbm} % mathbb numbers
\usepackage{enumitem} % fancy enumerate

\usepackage{aliascnt}

\usepackage{hyperref}
\hypersetup{
    colorlinks, linkcolor=NavyBlue,
    citecolor=OliveGreen, urlcolor=RedOrange
}

\usepackage{faktor}

\usepackage{tikz-cd}
\usepackage{tikz}
\usetikzlibrary{arrows,backgrounds,
    decorations.pathreplacing,
    decorations.pathmorphing
}
\usetikzlibrary{matrix,arrows}
\usepackage{colonequals}
\usetikzlibrary{decorations.pathmorphing,fit}

\newcommand{\euscr}[1]{\EuScript{#1}} % Euler script
\newcommand{\ccat}{\euscr{C}} % category C in Euler script 
\newcommand{\dcat}{\euscr{D}} % category D in Euler script
\newcommand{\ecat}{\euscr{E}} % category E in Euler script 
\newcommand{\vcat}{\euscr{V}} % category P in Euler script
\newcommand{\map}{\textnormal{map}} % mapping space
\newcommand{\spaces}{\euscr{S}} % the category of spaces
\newcommand{\evencircle}{\mathbf{T}_{\mathrm{ev}}}

\newcommand{\synsphere}{\thesphere_{\mathrm{syn}}}
\newcommand{\synspectra}{\mathrm{Syn}}
\newcommand{\ZZ}{\mathbf{Z}}
\newcommand{\HKR}{\mathrm{HKR}}
\newcommand{\HP}{\mathrm{HP}}
\newcommand{\HC}{\mathrm{HC}} 
\newcommand{\an}{\mathrm{an}} 

\newcommand{\const}{\mathrm{const}}
\newcommand{\spectra}{\euscr{S}p} % the category of spectra
\newcommand{\thesphere}{\mathbf{S}} % the sphere spectrum
\def\MU{\mathrm{MU}}

\newcommand{\nocontentsline}[3]{}
\newcommand{\tocless}[2]{\bgroup\let\addcontentsline=\nocontentsline#1{#2}\egroup}

\theoremstyle{plain}

\newtheorem{theorem}{Theorem}[section]

\newaliascnt{lemma}{theorem}
\newtheorem{lemma}[lemma]{Lemma}
\aliascntresetthe{lemma}

\newaliascnt{proposition}{theorem}
\newtheorem{proposition}[proposition]{Proposition}
\aliascntresetthe{proposition}

\usepackage{mathtools}
\DeclarePairedDelimiter{\ceil}{\lceil}{\rceil}
\newaliascnt{corollary}{theorem}
\newtheorem{corollary}[corollary]{Corollary}
\aliascntresetthe{corollary}

\newaliascnt{conjecture}{theorem}
\newtheorem{conjecture}[conjecture]{Conjecture}
\aliascntresetthe{conjecture}

\newaliascnt{question}{theorem}

\aliascntresetthe{question}

\theoremstyle{definition}

\newaliascnt{example}{theorem}
\newtheorem{example}[example]{Example}
\aliascntresetthe{example}

\newaliascnt{variant}{theorem}
\newtheorem{variant}[variant]{Variant}
\aliascntresetthe{variant}

\newaliascnt{warning}{theorem}
\newtheorem{warning}[warning]{Warning}
\aliascntresetthe{warning}

\newaliascnt{recollection}{theorem}
\newtheorem{recollection}[recollection]{Recollection}
\aliascntresetthe{recollection}

\newaliascnt{movable}{theorem}

\aliascntresetthe{movable}

\newaliascnt{definition}{theorem}
\newtheorem{definition}[definition]{Definition}
\aliascntresetthe{definition}

\newaliascnt{remark}{theorem}
\newtheorem{remark}[remark]{Remark}
\aliascntresetthe{remark}

\newaliascnt{notation}{theorem}
\newtheorem{notation}[notation]{Notation}
\aliascntresetthe{notation}

\newaliascnt{construction}{theorem}

\aliascntresetthe{construction}

\newaliascnt{assumption}{theorem}

\aliascntresetthe{assumption}

\newtheorem*{remark*}{Remark}
\newtheorem*{terminology*}{Terminology}
\newtheorem*{interpretation*}{Interpretation}
\newtheorem*{definition*}{Definition}
\newtheorem*{conjecture*}{Conjecture}
\newtheorem*{notation*}{Notation}
\newtheorem*{convention*}{Convention}

\theoremstyle{remark}

\usepackage[nameinlink,capitalise,noabbrev]{cleveref} % \Cref{prop:1.4} prints ``Proposition 1.4''

\crefname{theorem}{Theorem}{Theorems}
\Crefname{theorem}{Theorem}{Theorems}

\crefname{lemma}{Lemma}{Lemmas}
\Crefname{lemma}{Lemma}{Lemmas}

\crefname{proposition}{Proposition}{Propositions}
\Crefname{proposition}{Proposition}{Propositions}

\crefname{corollary}{Corollary}{Corollaries}
\Crefname{corollary}{Corollary}{Corollaries}

\crefname{conjecture}{Conjecture}{Conjectures}
\Crefname{conjecture}{Conjecture}{Conjectures}

\crefname{question}{Question}{Questions}
\Crefname{question}{Question}{Questions}

\crefname{example}{Example}{Examples}
\Crefname{example}{Example}{Examples}

\crefname{variant}{Variant}{Variants}
\Crefname{variant}{Variant}{Variants}

\crefname{warning}{Warning}{Warnings}
\Crefname{warning}{Warning}{Warnings}

\crefname{recollection}{Recollection}{Recollections}
\Crefname{recollection}{Recollection}{Recollections}

\crefname{movable}{Item to be moved}{Items to be moved}
\Crefname{movable}{Item to be moved}{Items to be moved}

\crefname{definition}{Definition}{Definitions}
\Crefname{definition}{Definition}{Definitions}

\crefname{remark}{Remark}{Remarks}
\Crefname{remark}{Remark}{Remarks}

\crefname{notation}{Notation}{Notations}
\Crefname{notation}{Notation}{Notations}

\crefname{construction}{Construction}{Constructions}
\Crefname{construction}{Construction}{Constructions}

\crefname{assumption}{Assumption}{Assumptions}
\Crefname{assumption}{Assumption}{Assumptions}

\numberwithin{equation}{section}

\makeatletter
  \def\subsection{\@startsection{subsection}{1}%
  \z@{.7\linespacing\@plus\linespacing}{.5\linespacing}%
  {\normalfont\bfseries\centering}}% NEW
\makeatother

\let\oldtocsection=\tocsection
\let\oldtocsubsection=\tocsubsection
\let\oldtocsubsubsection=\tocsubsubsection
\renewcommand{\tocsection}[2]{\hspace{0em}\oldtocsection{#1}{#2}}
\renewcommand{\tocsubsection}[2]{\hspace{1em}\oldtocsubsection{#1}{#2}}
\renewcommand{\tocsubsubsection}[2]{\hspace{2em}\oldtocsubsubsection{#1}{#2}}

\calclayout

\newcommand{\presheaves}{\mathcal{P}}

\DeclareMathOperator{\fil}{fil}
\newcommand{\Hrm}{\mathrm{H}}
\DeclareMathOperator{\Fil}{Fil} % filtered spectra 
\newcommand{\compFil}{\Fil^{c}} % complete filtered spectra 
\newcommand{\compbiFil}{\mathrm{BiFil}^{c}} % complete bifiltered spectra 
\newcommand{\BiFil}{\mathrm{BiFil}}
\newcommand{\Poly}{\mathrm{Poly}}
\newcommand{\AntHKR}{\mathrm{Ant}}
\newcommand{\RakHKR}{\mathrm{Rak}} 
\newcommand{\BLHKR}{\mathrm{BhLu}} 

\DeclareMathOperator{\gr}{gr}
\DeclareMathOperator{\THH}{THH}
\DeclareMathOperator{\HH}{HH}

\mathchardef\mhyphen="2D

%% file: macros.tex
\newcommand{\Dc}{\mathcal{D}}

\newcommand{\Zbf}{\mathbf{Z}}

\newcommand{\op}{\mathrm{op}}

\newcommand{\Fun}{\mathrm{Fun}}

\newcommand{\Map}{\mathrm{Map}}

\newcommand{\Mod}{\mathrm{Mod}}

\newcommand{\ev}{\mathrm{ev}}
\newcommand{\pev}{\mathrm{pev}}

\usepackage{relsize}
\usepackage[bbgreekl]{mathbbol}
\DeclareSymbolFontAlphabet{\mathbb}{AMSb} %to ensure that the meaning of \mathbb does not change
\DeclareSymbolFontAlphabet{\mathbbl}{bbold}

\newcommand{\Dec}{\mathrm{Dec}}

\newcommand{\CAlg}{\mathrm{CAlg}}

\newcommand{\down}{\downarrow}